\documentclass[11pt, oneside]{amsart}

\usepackage{amsmath,amsthm,amssymb,hyperref}
\usepackage{mathrsfs} 
\usepackage{lipsum}

\usepackage{tikz, tikz-cd}
\usepackage[all]{xy}
\usetikzlibrary{arrows,decorations.markings}
\usetikzlibrary{backgrounds,shapes}
\usetikzlibrary{patterns,hobby}
        \usepackage{pgfplots}
        \pgfplotsset{compat=1.6}
\tikzset{>=latex}
\usepackage{subfig}
\usepackage{booktabs}
\usepackage{subfiles}
\usepackage{caption}
\usepackage{makecell}
\usepackage[new]{old-arrows}

\usepackage{color}
\usepackage{hyperref}
\hypersetup{
    colorlinks=true, 
    linktoc=all,     
    linkcolor={mauve},  
    citecolor={plum},
    filecolor={plum},
    urlcolor={plum}
}

\definecolor{smoked}{RGB}{216, 212, 204}
\definecolor{mauve}{RGB}{200, 55, 171}
\definecolor{apricot}{RGB}{250, 144, 4}
\definecolor{sky}{RGB}{66, 169, 244}
\definecolor{plum}{RGB}{76, 0, 102}

\definecolor{lightmauve}{RGB}{232, 173, 220}
\definecolor{lightapricot}{RGB}{253, 211, 155}
\definecolor{lightsky}{RGB}{178, 221, 251}
\definecolor{lightplum}{RGB}{184, 153, 192}

\usepackage{geometry} 
\usepackage[parfill]{parskip}

\newcommand{\Z}{{\mathbb Z}}

\newcommand{\R}{{\mathbb R}}

\newcommand{\Q}{{\mathbb Q}}

\makeatletter
\newcommand{\oset}[3][0ex]{%
  \mathrel{\mathop{#3}\limits^{
    \vbox to#1{\kern-2\ex@
    \hbox{$\scriptstyle#2$}\vss}}}}
\makeatother

\DeclareMathOperator\Out{Out}

\DeclareMathOperator\Hom{Hom}

\newcommand\restr[2]{{%
  \left.\kern-\nulldelimiterspace
  #1
  \vphantom{\big|} 
  \right|_{#2}
  }}

\DeclareMathOperator{\PSL}{\mathrm{PSL}}
\DeclareMathOperator{\psl}{\PSL_2\R}

\theoremstyle{plain}                    
\newtheorem{thm}{Theorem}[section]
\newtheorem{thma}{Theorem}

\newtheorem{lem}[thm]{Lemma}
\newtheorem{prop}[thm]{Proposition}

\newtheorem{fact}[thm]{Fact}	
\newtheorem{assumption}[thm]{Assumption}

\theoremstyle{definition}
\newtheorem{defn}[thm]{Definition}

\theoremstyle{remark}
\newtheorem{rmk}[thm]{Remark}
\newtheorem{claim}[thm]{Claim}

\numberwithin{equation}{section}

\newenvironment{proofclaim}
 {\proof}
 {\endproof}

\newcommand{\surface}{S}
\DeclareMathOperator{\Rep}{Rep}
\newcommand{\RepDT}{\Rep^\mathrm{DT}_{\alpha}(\surface)}
\newcommand{\RepDTarg}[2]{\Rep^\mathrm{DT}_{#1}(#2)}

\DeclareMathOperator{\PMod}{PMod}

\newcommand{\nuG}{\nu_{\mathcal{G}}}

\title[Invariant measures for Deroin--Tholozan representations]{Invariant measures for Deroin--Tholozan representations}

\author{Yohann Bouilly}
\address[Y.~Bouilly]{Université de Strasbourg, 5/7 rue Ren\'e Descartes, 67084 Strasbourg, France.}
\email{bouilly@math.unistra.fr}

\author{Arnaud Maret}
\address[A.~Maret]{Université de Strasbourg, IRMA, 7 rue Descartes, 67000 Strasbourg, France.}
\email{maret.arnaud@unistra.fr}

\date{\today}

\begin{document}

\begin{abstract}
    We classify mapping class group invariant probability measures on the character varieties of Deroin--Tholozan representations, namely the compact components of relative $\mathrm{PSL}_2\mathbb{R}$-character varieties. We prove that an ergodic measure is either the counting measure on a finite orbit or agrees with the Liouville measure induced by the Goldman symplectic form. Our approach is based on measure disintegration along transverse Lagrangian tori fibrations.
\end{abstract}

\maketitle

\section{Introduction}
\subsection{Motivations and result}
Mapping class group dynamics on character varieties of surface group representations come in different flavors depending on the nature of the representations. For instance, the action is properly discontinuous on components of Fuchsian representations, or more generally of Anosov representations. In contrast, this work focuses on the cases where the dynamics exhibit chaotic behavior. More precisely, we study the dynamics on character varieties of \emph{totally elliptic} surface group representations into $\psl$, also known as \emph{Deroin--Tholozan} (DT) representations (Definition~\ref{def:DT-representations}). These character varieties, which we call \emph{DT components}, are compact, even though totally elliptic representations typically have dense image in $\psl$. A theorem of Mondello shows that DT components actually account for all non-trivial compact connected components of relative $\psl$-character varieties
~\cite{mondello}, where ``relative'' means that conjugacy classes have been prescribed for the images of peripheral curves, highlighting the pivotal role of total ellipticity to characterize compactness. In fact, non-trivial compact components exist only when the underlying surface is a sphere with at least four punctures and the peripheral monodromies are all elliptic.

Our work continues a line of research on mapping class group action on DT components. This action preserves a natural volume measure known as the \emph{Goldman measure}. The action was proved to be ergodic in~\cite{ergodicity}, and its closed invariant subsets have been classified in~\cite{orbit-closures} as either finite orbits or entire DT components; the classification of finite orbits was completed in~\cite{finite-orbits}. Our main result provides a classification of all mapping class group invariant measures on DT components.

\begin{thma}\label{main-thm} 
Let $\mu$ be a Borel probability measure on a DT component, ergodic with respect to the mapping class group action. Then $\mu$ is either the counting measure on a finite orbit or the Goldman measure, i.e.~the Liouville measure associated with the Goldman symplectic form.
\end{thma}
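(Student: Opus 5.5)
The plan is to disintegrate $\mu$ along Lagrangian torus fibrations coming from pants decompositions, and to push invariance under Dehn twists down to the fibers. Fix a pants decomposition $\mathcal{P}$ of the punctured sphere $\surface$ by simple closed curves $c_1,\dots,c_{n-3}$. Since every representation in $\RepDT$ is totally elliptic, each $\rho(c_i)$ is elliptic with a well-defined rotation angle $\vartheta_i\in(0,2\pi)$. The map $\Theta_{\mathcal P}=(\vartheta_1,\dots,\vartheta_{n-3})$ is the moment map of a Hamiltonian torus action. On the open dense set where the angles lie in the interior of the moment polytope, this action is free, and its fibers are Lagrangian tori $T^{n-3}$ (action--angle coordinates). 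The Dehn twist $\tau_{c_i}$ acts on the fiber over $\vartheta$ by translation by $\vartheta_i$ in the $i$-th angle coordinate.

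\textbf{Step 1: reduction to the regular part.} I would show that the boundary of the moment polytope, where some $\rho(c_i)$ is trivial or the torus action degenerates, carries no mass for a non-atomic ergodic $\mu$. That set is a finite union of lower-dimensional mapping-class-invariant-in-pieces strata, so an argument using other pants decompositions should work. If $\mu$ is supported on a finite orbit we are done. Otherwise, by ergodicity and the classification of orbit closures in~\cite{orbit-closures}, $\mu$ has full support, so it gives no mass to any finite orbit.

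\textbf{Step 2: disintegration.} Disintegrate $\mu=\int \mu_\vartheta\,d\bar\mu(\vartheta)$ along $\Theta_{\mathcal P}$. Each $\tau_{c_i}$ preserves the fibers, so by uniqueness of disintegration $\mu_\vartheta$ is invariant under the translation group generated by $\vartheta_1,\dots,\vartheta_{n-3}$ on the torus. For $\bar\mu$-a.e. $\vartheta$ with rationally independent angles (together with $2\pi$), this group is dense, so $\mu_\vartheta$ is Haar measure on the fiber. Thus on the ``irrational'' part, $\mu$ is locally the product of Lebesgue measure in the angle coordinates with some measure $\bar\mu$ on the actions.

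\textbf{Step 3: showing $\bar\mu$ is Lebesgue.} Take a second pants decomposition $\mathcal P'$ whose fibration is transverse to that of $\mathcal P$; for example, change one curve $c_1$ into a curve $c_1'$ meeting it twice. The measure $\mu$ is then also Haar along the $\mathcal P'$ fibers, on the full-measure set where both disintegrations are Haar. A measure invariant under two transverse families of torus translations whose infinitesimal generators span the tangent space is smooth in all directions, hence absolutely continuous with locally constant density with respect to Liouville. The generators here are the Hamiltonian vector fields of the $\vartheta_i$ and the $\vartheta'_j$, and by choosing enough pants decompositions they span the tangent space (the Goldman--Wolpert bracket is nondegenerate). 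Given absolute continuity, the ergodicity of the Goldman measure~\cite{ergodicity} forces $\mu$ to equal the normalized Goldman measure.

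\textbf{Main obstacle.} The hardest part will be handling the rational fibers: I must show that $\bar\mu$ gives zero mass to the set of angles that are rationally dependent. These sets are countable unions of codimension-one subsets of the polytope, so this amounts to excluding measures concentrated on invariant-looking subvarieties. On such fibers $\mu_\vartheta$ is only invariant under a closed subgroup, namely a union of subtori. My first attempt would use a transverse decomposition $\mathcal P'$: a resonance set for $\mathcal P$ is generically not one for $\mathcal P'$, so a mapping class moving $\mathcal P$ to $\mathcal P'$ transports mass. Iterating this, a positive-mass resonant set would have to be an invariant algebraic subset of lower dimension. I would then rule that out using the orbit-closure classification~\cite{orbit-closures}: any such subset of positive measure would have to be a finite orbit. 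I would argue by induction on the number of punctures, with the four-punctured sphere as the base case.
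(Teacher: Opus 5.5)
\textbf{Verdict: genuine gap.} The gap is the step you yourself call the main obstacle, and the route you sketch for it cannot work.

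That $\bar\mu$ gives no mass to resonant parameters (or, in your Step~1, to the boundary strata) is a measure-theoretic statement. The classification of \cite{orbit-closures} is topological. If a resonant hypersurface $H=\{\vartheta_c=q\pi\}$ had positive mass, ergodicity would only tell you that $\bigcup_{g\in\PMod(\surface)}g(H)$ is conull. This invariant set is a countable union of hypersurfaces, neither closed nor algebraic, and its closure is the whole component. So nothing forces it to be ``an invariant algebraic subset of lower dimension'', let alone a finite orbit. Likewise, a mapping class carrying $\mathcal{P}$ to $\mathcal{P}'$ carries resonant sets of $\mathcal{P}$ to resonant sets of $\mathcal{P}'$ of the \emph{same} mass, which contradicts nothing. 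Your induction on punctures is also not set up: restricting to a sub-sphere gives a family of measures over a varying boundary angle, which is not ergodic.

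The missing idea is a wandering-set argument. Claim~\ref{claim:iteration-mapping-class-implies-zero-measure} says that if $\mu(A\cap f^m(A))=0$ for all $m\geq1$, then $\mu(A)=0$; it must be fed an explicit mapping class and explicit control of $A\cap f^m(A)$. The paper runs it on the fibers of $\beta$:
\begin{itemize}
\item $f$ is the Dehn twist along the curve $c_ic_j$ determined by two consecutive non-degenerate triangles;
\item $A$ ranges over the sub-tori $T_\Theta$, by decreasing induction on the number of prescribed angles;
\item triangle-chain geometry and the trigonometry of Appendix~\ref{apx:trig-computations} (a degree $2m+4$ polynomial in $\cos(\upsilon_1/2)$) show that $T_\Theta\cap\tau^{-m}(T_\Theta)$ lies in countably many smaller sub-tori $T_{(\Theta\vert s)}$ (Lemma~\ref{lem:induction-step-fibers-zero-measure}).
\end{itemize}
No orbit-closure input is used. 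Your Step~1 is also unnecessary in the paper's set-up. Every point is regular for some chained pants decomposition \cite{aaron-arnaud}, and Lemma~\ref{lem:local-aboslute-continuity} works in a neighborhood lying in the regular parts of two fibrations, so boundary strata never have to be shown null.

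Two further points. First, in Step~3, non-degeneracy of $\omega_{\mathcal{G}}$ does not imply that the Hamiltonian fields of a prescribed family of curve functions span the tangent space at a given point. That pointwise transversality is Proposition~\ref{prop:transversality}, and it takes real work: the four-punctured case, the sub-surface identity~\eqref{eq:sub-spheres}, and the checks $D_{i-1}\neq B_i$. Apart from this, your flow-invariance/convolution route to absolute continuity is a legitimate alternative to the paper's $(\beta,\delta)$-chart argument. It has the advantage of needing spanning only by finitely many curve functions, rather than by two complementary pants decompositions, but that spanning must still be proved.

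Second, your description of the resonant set as a countable union of codimension-one pieces is correct, and sharper than the paper's text. Proposition~\ref{prop:pushforward-no-atoms} only shows that $\beta_\ast\mu$ has no atoms. Section~\ref{sec:dynamics-on-fibers} then treats the complement of $\Delta^{\mathrm{irr}}_{\mathcal{B}}$ in the interior of $\Delta_{\mathcal{B}}$ as countable, which holds only for $n=4$; for $n\geq5$ it contains positive-dimensional slices $\{\beta_1=q\pi\}$ for suitable $q\in\Q$. Using the whole group $\Lambda_{\mathcal{B}}$, whose generators each rotate a single angle coordinate, shrinks the bad set to $\bigcup_i\{\beta_i\in\pi\Q\}$. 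One still has to show $\mu(\{\beta_i=q\pi\})=0$, i.e.\ extend the wandering argument from fibers to these hypersurfaces. That is precisely the piece your proposal leaves open.
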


Theorem~\ref{main-thm} has been previously obtained in the special case of $4$-punctured spheres by Cantat--Dupont--Martin-Baillon~\cite[Corollary~3.3]{stationary-measures}. It also answers a question raised by the authors and Faraco in~\cite{orbit-closures}. To the best of our knowledge, classifications of mapping class group invariant measures on character varieties are scarce, aside from Theorem~\ref{main-thm} and the result of~\cite{stationary-measures} for 4-punctured spheres. In particular, the problem appears to remain open even for $\mathrm{SU}(2)$ character varieties.

As established in~\cite{finite-orbits}, the existence of finite mapping class group orbits inside DT components is an extremely rare phenomenon. In particular, all orbits are infinite if the underlying sphere has at least seven punctures. On all DT components with no finite orbits, Theorem~\ref{main-thm} asserts unique ergodicity of the mapping class group action. One may thus wonder whether Theorem~\ref{main-thm} recovers the minimality results of~\cite[Theorem~A]{orbit-closures}. Although unique ergodicity together with full support implies minimality for actions of locally compact amenable groups, the mapping class groups considered here are not amenable, and we are therefore not aware of any such implication in this setting. 

Related works include a recent contribution by Cantat--Dupont--Martin-Baillon that classifies closed invariant subspaces and invariant measures for the action of $\mathrm{Aut}(F_n)$ by pre-composition on $\Hom(F_n,G)$, for compact Lie groups $G$ and sufficiently large $n$
~\cite{AutFn}. Here, $F_n$ denotes the free group on $n$ generators. Although $F_n$ is isomorphic to the fundamental group of an $(n+1)$-punctured sphere $\surface$, only very specific automorphisms of $F_n$ are induced by homeomorphisms of $\surface$. Our result is thus of a different nature.

A natural continuation of this work is the classification of \emph{stationary measures}, which are, by definition, invariant only in an averaged sense (see e.g.~\cite[Definition~8.23]{einsiedler-ward}). For 4-punctured spheres, it was shown in~\cite{stationary-measures} that the only ergodic stationary measures on DT components are in fact invariant; consequently, they are either counting measures along finite orbits or the Goldman measure. For spheres with more punctures, however, the classification of ergodic stationary measures on DT components remains open. 

\subsection{Strategy of the proof}
Our strategy to prove Theorem~\ref{main-thm} is inspired from~\cite{stationary-measures}. After excluding the case where the measure $\mu$ has atoms, we proceed as follows.

DT components are symplectic toric manifolds and therefore naturally fiber into isotropic tori, most of which are Lagrangian. The general idea is to \emph{disintegrate} $\mu$ along this fibration (Section~\ref{sec:disintegration}). The conditional measures on the fibers are invariant under the action of a non-trivial subgroup of the mapping class group that preserves the fibration. The residual dynamics on the fibers is sufficiently rich---it contains irrational rotations of most Lagrangian tori---to force almost all the conditional measures to be Lebesgue measures. We apply this argument twice at every point in a DT component, disintegrating $\mu$ along two Lagrangian tori fibrations that are transverse at that point. In this way, we conclude that the localization of the measure $\mu$ around the point is absolutely continuous with respect to the Goldman measure. Since this holds at every point, it follows that $\mu$ coincides with the Goldman measure.

There are two technical results to prove along the way:
\begin{enumerate}
    \item First, assuming that $\mu$ has no atoms, we show that the measure of every isotropic torus is zero (Proposition~\ref{prop:pushforward-no-atoms}). We do so by constructing explicit mapping class group elements for which the iterated images of a given fiber are ``almost'' pairwise disjoint.
    \item Second, we prove the required transversality result: for every point in a DT component, there exist two toric fibrations such that the Langrangian fibers through the point are transverse (Proposition~\ref{prop:transversality}). 
\end{enumerate}

For the proofs of Propositions~\ref{prop:pushforward-no-atoms} and~\ref{prop:transversality}, we rely on the interpretation of DT representations as \emph{triangle chains} in the hyperbolic plane (Section~\ref{sec:triangle-chains}). This allows for explicit computations using hyperbolic trigonometry.

We emphasize that our proof of Theorem~\ref{main-thm} does not rely on the classification of closed invariant subsets established in~\cite{orbit-closures} (see also Remark~\ref{rem:independence-from-minimality}). In particular, we do not use the fact that $\mu$ has full support when it is atom-free.

\subsection{Organization of the paper}
Section~\ref{sec:preliminaries} reviews key properties of DT representations and their description in terms of triangle chains. It also includes a brief recap on measure disintegration.

We prove that Lagrangian fibers have zero measure (Proposition~\ref{prop:pushforward-no-atoms}) in Section~\ref{sec:no-atoms}. Some of the more involved trigonometric computations are postponed to Appendix~\ref{apx:trig-computations}. The transversality statement on Lagrangian tori arising from distinct fibrations (Proposition~\ref{prop:transversality}) is addressed in Section~\ref{sec:transversality}. 

Finally, in Section~\ref{sec:proof}, we complete the proof of Theorem~\ref{main-thm} by detailing the disintegration argument and invoking the unique ergodicity of irrational torus rotations.

\subsection{Acknowledgements}
We are grateful to Nicolas Tholozan for initially suggesting the problem of classifying invariant measures on DT components. Thanks to Pierre-Louis Blayac and Florestan Martin-Baillon for inspiring discussions. AM was supported by the European Research Council (ERC) under the European Union's Horizon 2020 research and innovation program (Grant agreement No. 101096550).

\section{Preliminaries}\label{sec:preliminaries}
\subsection{DT representations}
We briefly recap the definition of DT representations and state some of their important properties. For more details, the reader is invited to consult~\cite[Section~2]{orbit-closures} or~\cite[Section~2.2]{finite-orbits}.

\subsubsection{Definition}
Let $\surface$ denote an oriented sphere with a finite set $\mathcal{P}$ of $n\geq 4$ punctures. Counterclockwise loops around a puncture of $\surface$ are called \emph{peripheral loops}. Among all representations $\pi_1\surface\to\psl$, we are interested in those mapping all peripheral loops on $\surface$ to non-trivial elliptic elements of $\psl$. More precisely, given an angle vector $\alpha\in (0,2\pi)^\mathcal{P}$, we denote by $\Rep_\alpha(\surface,\psl)$ the set of conjugacy classes of representations $\pi_1\surface\to\psl$ that map all peripheral loops around a puncture $p\in\mathcal{P}$ to elements of $\psl$ with counterclockwise \emph{rotation angle} $\alpha_p$, i.e.~ conjugates of
\[
\pm\begin{pmatrix}
    \cos(\alpha_p/2) & \sin(\alpha_p/2)\\
    -\sin(\alpha_p/2) & \cos(\alpha_p/2)
\end{pmatrix}.
\]
The space $\Rep_\alpha(\surface,\psl)$ is called \emph{$\alpha$-relative character variety}.

\begin{defn}\label{def:DT-representations}
    A representation $\rho\colon\pi_1\surface\to\psl$ whose conjugacy class $[\rho]$ belongs to $\Rep_\alpha(\surface,\psl)$ is a \emph{DT representation} if
    \begin{itemize}
        \item $\rho(\pi_1\surface)$ is a Zariski dense subgroup of $\psl$,\footnote{This condition can be relaxed to the requirement that $\rho(\pi_1\surface)$ is not contained in any conjugate of $\mathrm{PSO}(2)$, see~\cite[Theorem~A]{totally-elliptic}. The condition may even be dropped if $\alpha$ satisfies~\eqref{eq:angle-condition}, since all representations with peripheral monodromy $\alpha$ are automatically Zariski dense when $\sum_{p\in\mathcal{P}}\alpha_{p}\notin 2\pi\Z$.} and
        \item $\rho$ is \emph{totally elliptic}, meaning that every element of $\pi_1\surface$ that represents a simple closed curve on $\surface$ is mapped to an elliptic element of $\psl$.
    \end{itemize}
\end{defn}

DT representations were first discovered by Benedetto--Goldman when $n=4$~\cite{benedetto-goldman}, and for all $n\geq 4$ by Deroin--Tholozan~\cite{deroin-tholozan}. Their characterization in terms of total ellipticity was established in~\cite{totally-elliptic}. It turns out that DT representations exist if and only if
\begin{equation}\label{eq:angle-condition}
\sum_{p\in\mathcal{P}} \alpha_p<2\pi \quad \text{or}\quad \sum_{p\in\mathcal{P}} \alpha_p>2\pi(n-1).
\end{equation}
When $\alpha$ satisfies~\eqref{eq:angle-condition}, Deroin--Tholozan proved that the subset of conjugacy classes of DT representations form a smooth connected component of the $\alpha$-relative character variety called \emph{DT component}. We denote it by
\[
\RepDT\subset\Rep_\alpha(\surface,\psl).
\]
They also proved that every DT component is compact and symplectomorphic to $\mathbb{CP}^{n-3}$, where $n$ is the number of punctures on the underlying sphere~\cite[Theorem~4]{deroin-tholozan}. 

\subsubsection{Symplectic toric structure}
The symplectic structure on $\RepDT$ is given by the Goldman symplectic form~\cite{goldman-symplectic}, which we denote by $\omega_\mathcal{G}$. Two related notions will be relevant for this paper:
\begin{itemize}
    \item The Poisson bracket defined by $\{f,g\}=\omega_\mathcal{G}(X_f,X_g)$, where $X_f, X_g$ are the Hamiltonian vector fields of two smooth functions $f,g\colon\RepDT\to\R$.
    \item The Liouville measure $\nuG$ associated to $\omega_{\mathcal{G}}$ which we normalize to a probability measure on $\RepDT$ and call the \emph{Goldman measure}.
\end{itemize}

Every DT component carries the structure of a symplectic toric manifold once a \emph{pants decomposition} $\mathcal{B}$ of $\surface$ is fixed, i.e.\ a maximal collection of disjoint, non-peripheral simple closed curves $b_1,\ldots,b_{n-3}$ on $S$. Since DT representations are totally elliptic by Definition~\ref{def:DT-representations}, the pants decomposition $\mathcal{B}$ determines a moment map $\beta\colon \RepDT \to (0,2\pi)^{n-3}$ by sending $[\rho]$ to the rotation angles of $\rho(b_1),\ldots,\rho(b_{n-3})$. Its image $\Delta_\mathcal{B}\subset \mathbb{R}^{n-3}$ is an equivalent copy of the standard simplex known as moment polytope~\cite[Lemma~3.5]{deroin-tholozan}. The fibers of $\beta$ over $\Delta_\mathcal{B}$ define a fibration of $\RepDT$ into isotropic tori. The top dimensional fibers are Lagrangian tori and coincide with the fibers over the interior $\oset{\circ}{\Delta}_\mathcal{B}$ of $\Delta_\mathcal{B}$.

\subsubsection{Dynamics}
The action on $\RepDT$ that we study is that of the \emph{pure mapping class group} of $\surface$, denoted by $\PMod(\surface)$ and defined as the group of orientation-preserving homeomorphisms of $\surface$ that fix each puncture individually, up to isotopy. By the Dehn--Nielsen--Baer theorem (see e.g.~\cite[Theorem~8.8]{mcg-primer}), this group identifies with a subgroup of $\Out(\pi_1\surface)$, and thus acts on $\RepDT$ by pre-composition.

The action was proved to be ergodic in~\cite{ergodicity}. Furthermore, closed invariant subsets have been classified in~\cite{orbit-closures}: they are either finite orbits (independently classified in~\cite{finite-orbits}), or the whole DT component $\RepDT$.

\subsubsection{Triangle chains}\label{sec:triangle-chains}
There is a simple geometric model for DT representations developed in~\cite{action-angle} in terms of chains of hyperbolic triangles in the upper half-plane. The construction depends on two pieces of data:
\begin{enumerate}
\item First, a \emph{chained pants decomposition} $\mathcal{B}$ of $\surface$, i.e.~a pants decomposition in which each pair of pants contains at least one puncture of $\surface$.
\item Second, a presentation of $\pi_1\surface$ with generators $c_1,\ldots,c_n$ satisfying $c_1\cdots c_n=1$ (we say that the presentation is \emph{geometric}) which is \emph{compatible} with $\mathcal{B}$, meaning that the pants curves are represented by the fundamental group elements $b_i=(c_1\cdots c_{i+1})^{-1}$ for $i=1,\ldots,n-3$, see Figure~\ref{fig:standard-pants-decomposition}.
\end{enumerate} 

\begin{figure}[h]
    \centering
\begin{tikzpicture}[scale=1.3, decoration={
    markings,
    mark=at position 0.6 with {\arrow{>}}}]
  \draw[postaction={decorate}] (0,-.5) arc(-90:-270: .25 and .5) node[midway, left]{$c_1$};
  \draw[black!40] (0,.5) arc(90:-90: .25 and .5);
  \draw[apricot, postaction={decorate}] (2,.5) arc(90:270: .25 and .5) node[midway, left]{$b_1$};
  \draw[lightapricot] (2,.5) arc(90:-90: .25 and .5);
  \draw[apricot, postaction={decorate}] (4,.5) arc(90:270: .25 and .5) node[midway, left]{$b_2$};
  \draw[lightapricot] (4,.5) arc(90:-90: .25 and .5);
  \draw[apricot, postaction={decorate}] (6,.5) arc(90:270: .25 and .5) node[midway, left]{$b_3$};
  \draw[lightapricot] (6,.5) arc(90:-90: .25 and .5);
  \draw[postaction={decorate}] (8,.5) arc(90:270: .25 and .5);
  \draw (8,.5) arc(90:-90: .25 and .5);
  
  \draw (.5,1) arc(180:0: .5 and .25) node[midway, above]{$c_2$};
  \draw[postaction={decorate}] (.5,1) arc(-180:0: .5 and .25);
  \draw (2.5,1) arc(180:0: .5 and .25)node[midway, above]{$c_3$};
  \draw[postaction={decorate}] (2.5,1) arc(-180:0: .5 and .25);
  \draw (4.5,1) arc(180:0: .5 and .25)node[midway, above]{$c_4$};
  \draw[postaction={decorate}] (4.5,1) arc(-180:0: .5 and .25);
  \draw (6.5,1) arc(180:0: .5 and .25)node[midway, above]{$c_5$};
  \draw[postaction={decorate}] (6.5,1) arc(-180:0: .5 and .25);
   
  \draw (0,.5) to[out=0,in=-90] (.5,1);
  \draw (1.5,1) to[out=-90,in=180] (2,.5);
  \draw (0,-.5) to[out=0,in=180] (2,-.5);
  
  \draw (2,.5) to[out=0,in=-90] (2.5,1);
  \draw (3.5,1) to[out=-90,in=180] (4,.5);
  \draw (2,-.5) to[out=0,in=180] (4,-.5);
  
  \draw (4,.5) to[out=0,in=-90] (4.5,1);
  \draw (5.5,1) to[out=-90,in=180] (6,.5);
  \draw (4,-.5) to[out=0,in=180] (6,-.5);
  
  \draw (6,.5) to[out=0,in=-90] (6.5,1);
  \draw (7.5,1) to[out=-90,in=180] (8,.5);
  \draw (6,-.5) to[out=0,in=180] (8,-.5);
  
  \draw (8.7, 0) node{$\ldots$};
\end{tikzpicture}
    \caption{The pants curves $b_i$ in a compatible system of generators of $\pi_1\surface$.}
    \label{fig:standard-pants-decomposition}
\end{figure}

Given a DT representation $\rho\colon\pi_1\surface\to\psl$, we denote by $C_1,\ldots,C_n$ and $B_1,\ldots,B_{n-3}$ the unique fixed points of $\rho(c_1),\ldots,\rho(c_n)$, respectively $\rho(b_1),\ldots,\rho(b_{n-3})$, in the upper half-plane. The \emph{$\mathcal{B}$-triangle chain} of $\rho$ is the union of the $n-2$ following triangles:
\[
(C_1,C_2,B_1), (B_1,C_3,B_2),\ldots,(B_{n-4},C_{n-3},B_{n-3}), (B_{n-3},C_{n-1},C_n).
\]
An example of triangle chain is illustrated on Figure~\ref{fig:triangle-chain}. The vertices $C_1,\ldots,C_{n-3}$ are the \emph{exterior} vertices, while $B_1,\ldots,B_{n-3}$ are the \emph{shared} vertices. A triangle chain is called \emph{regular} if all $n-2$ triangles are non-degenerate. Since each triangle corresponds to a pair of pants on $\surface$, a triangle may only be degenerate when all three vertices coincide.
\begin{figure}[h]
\centering
\begin{tikzpicture}[font=\sffamily, scale=1.2, decoration={markings, mark=at position 1.01 with {\arrow{>}}}]
\node[anchor=south west,inner sep=0] at (0,0) {\includegraphics[width=10.8cm]{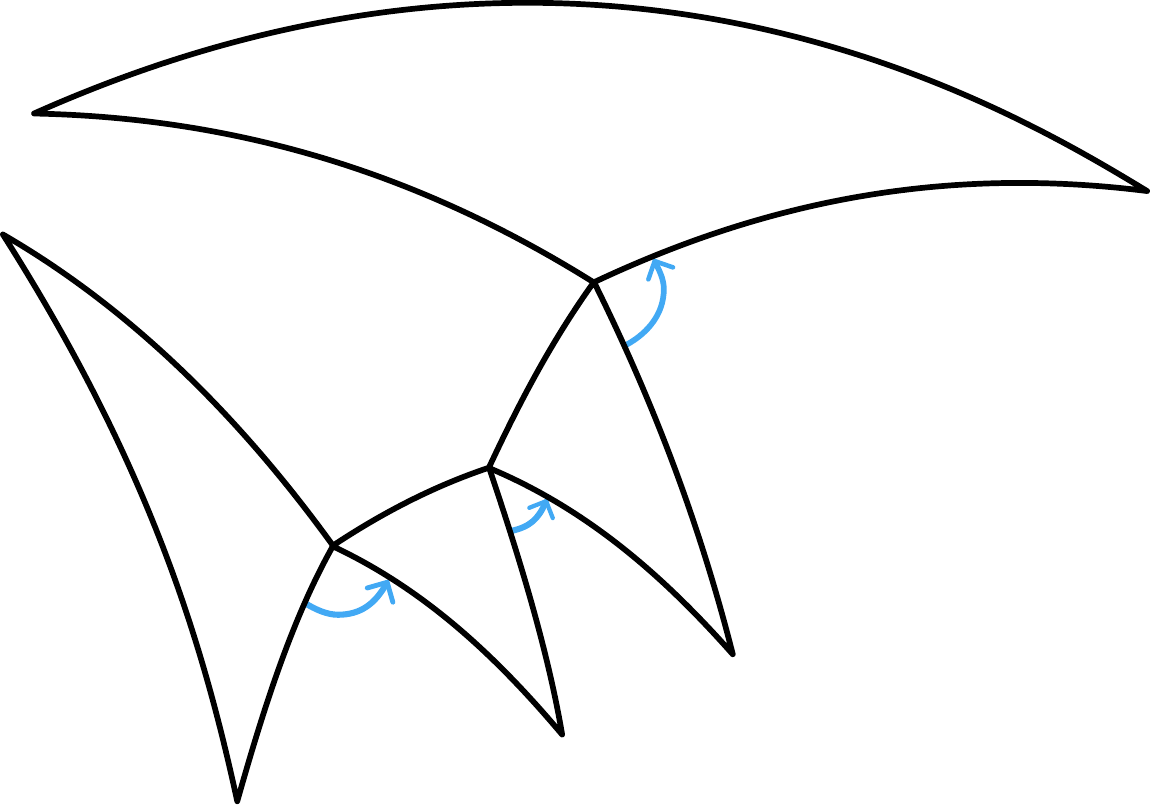}};

\begin{scope}
\fill (0.27,5.4) circle (0.07) node[left]{$C_1$};
\fill (9.01,4.81) circle (0.07) node[right]{$C_2$};
\fill (5.75,1.2) circle (0.07) node[right]{$C_3$};
\fill (4.4,.56) circle (0.07) node[below]{$C_4$};
\fill (1.85,0.03) circle (0.07) node[below]{$C_5$};
\fill (0.03,4.46) circle (0.07) node[left]{$C_6$};
\end{scope}

\begin{scope}[apricot]
\fill (4.63,4.06) circle (0.07) node[left]{$B_1$};
\fill (3.83,2.65) circle (0.07) node[above left]{$B_2$};
\fill (2.64,2.05) circle (0.07);
\draw (2.65,2.12) node[above]{$B_3$};
\end{scope}

\begin{scope}[sky]
\draw (5.4,3.8) node{$\gamma_1$};
\draw (4.3,2.05) node{$\gamma_2$};
\draw (2.8,1.3) node{$\gamma_3$};
\end{scope}
\end{tikzpicture}
\caption{A regular triangle chain corresponding to a DT representation of a $6$-punctured sphere.}
\label{fig:triangle-chain}
\end{figure}

\begin{fact}[{\cite[Lemma~3.8]{action-angle}}]\label{fact:regular-triangle-chains}
The $\mathcal{B}$-triangle chain of $\rho$ is regular if and only if $[\rho]$ belongs to a top dimensional fiber of $\beta$, i.e.~$\beta([\rho])\in\oset{\circ}{\Delta}_\mathcal{B}$, or equivalently $(d\beta_i)_{[\rho]}\neq 0$ for every $i=1,\ldots,n-3$.
\end{fact}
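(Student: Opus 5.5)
The plan is to route both equivalences through the facet structure of the moment polytope. First I would identify the facets of $\Delta_\mathcal{B}$ with degeneracies of individual triangles. Then I would use standard toric geometry to translate "interior point of the polytope" into the differential condition.

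\emph{Step 1: local model of one triangle.} Consider a pair of pants of $\mathcal{B}$ with boundary elements $x,y,z$ satisfying $xyz=1$; these are $(c_1,c_2,b_1^{-1})$, $(b_{i-1},c_{i+1},b_i^{-1})$, or $(b_{n-3},c_{n-1},c_n)$. Their images are elliptic with fixed points $X,Y,Z$ in the upper half-plane. I would use the classical fact that a product of rotations about the three vertices of a hyperbolic triangle equals the identity exactly when the rotation angles are twice the interior angles, up to the orientation convention. This shows two things:
\begin{itemize}
\item if $X,Y,Z$ are not all equal, the triangle is non-degenerate, and its interior angles are explicit affine functions of the three rotation angles;
\item its area is a nonzero affine function $\pi-\tfrac12(\theta_x+\theta_y+\theta_z)$, or the complementary expression, depending on which case of~\eqref{eq:angle-condition} holds.
\end{itemize}
A degenerate triangle, meaning $X=Y=Z$, occurs exactly when this area vanishes. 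In that case $\rho(x),\rho(y)$ commute, so the angle of the third generator is the sum of the other two modulo $2\pi$. This gives one affine equation in $\beta$: for instance $\beta_1=\alpha_1+\alpha_2$, or $\beta_i=\beta_{i-1}+\alpha_{i+1}$, or $\beta_{n-3}+\alpha_{n-1}+\alpha_n=2\pi$, up to the sign conventions fixed by~\eqref{eq:angle-condition}.

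\emph{Step 2: facets.} By~\cite[Lemma~3.5]{deroin-tholozan}, $\Delta_\mathcal{B}$ is a simplex in $\R^{n-3}$ and hence has exactly $n-2$ facets. Step 1 produces exactly $n-2$ affine functions, the triangle areas, which are positive on the image of $\beta$. Each of them vanishes somewhere on $\Delta_\mathcal{B}$, because degenerate chains are realized: collapse one triangle and choose the others compatibly. I would check that these $n-2$ inequalities cut out $\Delta_\mathcal{B}$, by comparing with the explicit inequalities of Deroin--Tholozan. It follows that $\beta([\rho])\in\oset{\circ}{\Delta}_\mathcal{B}$ if and only if every triangle has positive area, that is, if and only if the chain is regular.

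\emph{Step 3: differentials.} The functions $\beta_i$ generate the Goldman twist flows along $b_i$. These flows are $2\pi$-periodic, commute, and assemble into the Hamiltonian $T^{n-3}$-action whose moment map is $\beta$. In the triangle-chain model, the flow of $\beta_i$ rotates the part of the chain on one side of $B_i$ about $B_i$.

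For a compact symplectic toric manifold, Delzant's local normal form says that a point lies over the open polytope exactly when its stabilizer is trivial. Equivalently, the $d\beta_i$ are linearly independent there, and each $d\beta_i$ is nonzero. On a facet, some circle subgroup fixes the point, so the corresponding combination of the $d\beta_i$ vanishes.

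I would also verify this directly in the chain model. Suppose triangle $j$ is degenerate. Then rotating the sub-chain beyond the collapsed vertex is a genuine symmetry of $[\rho]$, up to conjugation, so the flow of the corresponding combination of $\beta$'s fixes $[\rho]$ and that differential vanishes. Conversely, if the chain is regular, no partial rotation of the chain can be realized by a global conjugation. Hence each flow moves $[\rho]$ and $(d\beta_i)_{[\rho]}\neq 0$.

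\emph{Expected obstacle.} The main difficulty is in Step 3, in reconciling the literal condition "$(d\beta_i)\neq0$ for each $i$" with the facets. On the facet $\beta_i=\beta_{i-1}+\alpha_{i+1}$, it is the combination $d(\beta_i-\beta_{i-1})$ that vanishes, not necessarily $d\beta_i$ alone. So I would first prove the equivalence with "the differentials $d\beta_1,\dots,d\beta_{n-3}$ are linearly independent at $[\rho]$". Then I would argue separately, using the same partial-rotation picture, that on each facet some coordinate differential is also forced to degenerate.
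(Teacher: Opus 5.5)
Your Steps 1--2 and the toric part of Step 3 are sound, and together they prove: the chain is regular $\iff$ $\beta([\rho])$ lies in the interior of $\Delta_\mathcal{B}$ $\iff$ $(d\beta_1)_{[\rho]},\ldots,(d\beta_{n-3})_{[\rho]}$ are linearly independent.

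One small addition is needed in Step 1. The commuting relation only gives the facet equation modulo $2\pi$ in the $\beta$'s. To get exactly ``area $=0$'', use that each area function takes values in $[0,\pi)$ on $\Delta_\mathcal{B}$, because the $n-2$ areas sum to $\tfrac12\sum_p\alpha_p-(n-1)\pi<\pi$.

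\textbf{The gap is your final step.} It is not true that on every facet some single $d\beta_i$ vanishes. Take $n=5$ and a point over the relative interior of the facet on which $\beta_2-\beta_1$ is constant; such points exist because $\beta$ is onto $\Delta_\mathcal{B}$. There only the middle triangle degenerates: $B_1=C_3=B_2$, while $(C_1,C_2,B_1)$ and $(B_2,C_4,C_5)$ are non-degenerate.
\begin{itemize}
\item The flow of $\beta_1$ and the flow of $\beta_2$ act identically: both fix $C_1,C_2,C_3$ and rotate $C_4,C_5$ about $B_1=B_2$.
\item The only isometry fixing the non-degenerate triangle $(C_1,C_2,C_3)=(C_1,C_2,B_1)$ is the identity.
\item A small non-trivial rotation about $B_2$ moves the non-degenerate triangle $(B_2,C_4,C_5)$.
\end{itemize}
Hence $[\rho]$ moves, so $(d\beta_1)_{[\rho]}=(d\beta_2)_{[\rho]}\neq 0$, even though the chain is not regular. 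Only $d(\beta_2-\beta_1)$ vanishes there. Your own partial-rotation picture therefore proves the opposite of what you intended.

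In toric terms: over the relative interior of a facet with normal $v$, the kernel of $t\mapsto\sum_i t_i(d\beta_i)_{[\rho]}$ is exactly $\R v$. So a coordinate differential vanishes only on the two end facets, whose normals are parallel to $e_1$ and $e_{n-3}$ and which come from the triangles $(C_1,C_2,B_1)$ and $(B_{n-3},C_{n-1},C_n)$. It never vanishes on the facet of an interior triangle $(B_{k-1},C_{k+1},B_k)$, whose normal is $e_k-e_{k-1}$.

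Consequently, for $n\geq 5$ the condition ``$(d\beta_i)_{[\rho]}\neq 0$ for every $i$'' follows from regularity but does not imply it; for $n=4$ the two readings agree. The paper gives no proof of this fact, only a citation. Its third condition has to be read as linear independence of the $(d\beta_i)_{[\rho]}$. That is also the form actually used in the proof of Lemma~\ref{lem:local-aboslute-continuity}: there, interiority of $\beta([\rho])$ is deduced from the $2(n-3)$ differentials of Proposition~\ref{prop:transversality} spanning $T^\ast_{[\rho]}\RepDT$, which forces the $(d\beta_i)_{[\rho]}$ to be independent. You should stop after proving the linear-independence version and record only the one-way implication for pointwise non-vanishing, rather than try to prove its converse.
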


The next result characterizes the fixed points of certain Dehn twists in terms of $\mathcal{B}$-triangle chains.
\begin{fact}[{\cite[Lemma~2.17]{finite-orbits}}]\label{fact:fixed-points-Dehn-twists}
    If $\tau\in\PMod(\surface)$ denotes the Dehn twist along the simple closed curve represented by the fundamental group element $c_ic_j$ for some $i<j$, then $[\rho]\in\RepDT$ is fixed by $\tau$ if and only if $C_i=\cdots=C_j$ or $C_{j+1}=\cdots=C_n=C_1=\cdots=C_{i-1}$.
\end{fact}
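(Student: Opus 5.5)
We interpret the curve as the one represented by $\gamma=c_ic_{i+1}\cdots c_j$, which is the reading that matches the conclusion. This curve bounds a disk containing the punctures $i,\ldots,j$, and its complement contains the remaining punctures. The plan is to write the Dehn twist explicitly on the generators and reduce the fixed-point condition to a statement about centralizers in $\psl$.

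\textbf{Action of the twist.} Up to an inner automorphism, $\tau$ acts on the geometric presentation by
\[
c_k\mapsto \gamma c_k\gamma^{-1}\ (i\le k\le j),\qquad c_k\mapsto c_k\ (\text{otherwise}),
\]
which is compatible with $c_1\cdots c_n=1$ and fixes $\gamma$. Let $A$ be the subgroup of $\psl$ generated by $\rho(c_i),\ldots,\rho(c_j)$ (the inside group). Let $B$ be the subgroup generated by the remaining $\rho(c_k)$ (the outside group).

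\textbf{Reduction to centralizers.} The class $[\rho]$ is fixed by $\tau$ if and only if some $g\in\psl$ conjugates $\rho$ to $\tau\cdot\rho$. This means $g$ centralizes $B$ and $g^{-1}\rho(\gamma)$ centralizes $A$. Since $\gamma$ is a non-peripheral simple closed curve, total ellipticity makes $\rho(\gamma)$ elliptic. It is nontrivial because its rotation angle lies in $(0,2\pi)$, being a coordinate of the moment map for a pants decomposition containing $\gamma$. Call its fixed point $Q\in\mathbb{H}^2$.

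The key fact is that two nontrivial elliptic elements of $\psl$ commute if and only if they have the same fixed point. Hence, for a group generated by nontrivial elliptics:
\begin{itemize}
\item if the fixed points are not all equal, the centralizer is trivial;
\item if they all equal a point $P$, the centralizer is the rotation group $\mathrm{Stab}(P)\cong\mathrm{PSO}(2)$.
\end{itemize}

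\textbf{Case analysis.} Suppose first that $Z(A)$ and $Z(B)$ are both trivial. Then $g=1$ and $\rho(\gamma)=g=1$, which is a contradiction. Hence being fixed forces $C_i=\cdots=C_j$ or $C_{j+1}=\cdots=C_n=C_1=\cdots=C_{i-1}$.

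For the converse, suppose $C_i=\cdots=C_j=P$. Then $\rho(\gamma)$ is a product of rotations about $P$, so it fixes $P$ and commutes with $A$. Taking $g=1$ works.

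Suppose instead the outside points all equal $P$. Then $\rho(\gamma)=\bigl(\rho(c_{j+1})\cdots\rho(c_n)\rho(c_1)\cdots\rho(c_{i-1})\bigr)^{-1}$, using $c_1\cdots c_n=1$ and cyclic rotation. So $\rho(\gamma)$ fixes $P$ and commutes with $B$. Taking $g=\rho(\gamma)$ works.

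\textbf{Main obstacle.} The only delicate point is bookkeeping. One must check that the twist formula is correct up to an inner automorphism, including the direction of the twist and whether the twisted generators are conjugated by $\gamma$ or $\gamma^{-1}$. Neither choice affects the fixed-point criterion, since conjugating by $\gamma^{\pm1}$ gives the same centralizer conditions. One must also justify that $\rho(\gamma)\neq 1$, which follows from total ellipticity together with the nontrivial rotation angle.
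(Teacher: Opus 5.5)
Your argument is correct. The paper gives no proof of this fact (it is quoted from \cite[Lemma~2.17]{finite-orbits}), so there is nothing to match it against, and your proposal works as a self-contained proof.

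Reading the curve as $c_ic_{i+1}\cdots c_j$ is the right choice. Taken literally with $j>i+1$, the statement is false. For $n=4$, $i=1$, $j=3$ the second condition is vacuous, so it would assert that the twist along $c_1c_3$ fixes every point. In fact that twist rotates the generic fibres of the moment map of the pants decomposition $\{c_1c_3\}$ by a nonzero angle (Fact~\ref{fact:action-of-Dehn-twist-angle-coordinates}). For $j=i+1$ the two readings agree, and this is the only case the paper uses: in Claim~\ref{claim:exclude-0-pi} the fact is applied in the $\Upsilon$-presentation, where $c_i,c_j$ are the first two generators.

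Your route is algebraic: you split $\pi_1\surface$ along $\gamma$, write the twist as conjugation by $\gamma$ on one side, and reduce to centralizers of groups generated by elliptics. This avoids needing a chained pants decomposition with compatible presentation in which $\gamma$ is a pants curve, which the triangle-chain language of the paper would require. It also gives directly the refinement used in Claim~\ref{claim:exclude-0-pi}: the common fixed point of the outside generators is fixed by $\rho(\gamma)$, hence equals its unique fixed point ($Y_1$ there).

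Two small points to tighten:
\begin{itemize}
\item ``Two nontrivial elliptics commute iff they share a fixed point'' does not by itself compute the centralizers, since $g$ ranges over all of $\psl$ and could a priori be hyperbolic or parabolic. Add the one-line observation that $gag^{-1}=a$ forces $g$ to preserve $\mathrm{Fix}(a)=\{P\}$, so the centralizer of $a$ is exactly $\mathrm{Stab}(P)$.
\item Your justification of $\rho(\gamma)\neq 1$ via a moment map covers only essential non-peripheral $\gamma$. In the degenerate cases, where one side contains at most one puncture, the twist is trivial and $Z(B)\neq 1$, so the contradiction step is not needed there.
\end{itemize}
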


The oriented angles $\gamma_i=\angle(C_{i+2},B_i,C_{i+1})$, illustrated on Figure~\ref{fig:triangle-chain}, ``between" two consecutive non-degenerate triangles in a $\mathcal{B}$-triangle chain are a possible choice of angle coordinates that parametrize the fibers of the moment map $\beta$. In particular, the action-angle coordinates $(\beta_1,\ldots,\beta_{n-3},\gamma_1,\ldots,\gamma_{n-3})$ identify $\beta^{-1}(\oset{\circ}{\Delta}_\mathcal{B})$ diffeomorphically with the product $\oset{\circ}{\Delta}_\mathcal{B}\times (\R/2\pi\Z)^{n-3}$.

\begin{fact}[{see e.g.~\cite[Lemma~2.15]{finite-orbits}}]\label{fact:action-of-Dehn-twist-angle-coordinates}
    The Dehn twists $\tau_1,\ldots,\tau_{n-3}\in\PMod(\surface)$ along the pants curves of $\mathcal{B}$ preserve the fibers of $\beta$ and act on angle coordinates by
    \[
    \gamma_i \circ \tau_j =\begin{cases}
        \gamma_i+\beta_i & \text{if $j=i$,}\\
        \gamma_i & \text{if $j\neq i$.}
    \end{cases}
    \]
    Geometrically, $\tau_i$ acts on $\mathcal{B}$-triangle chains by rotating the sub-chain made of the triangles $(B_i,C_{i+2},B_{i+1}),\ldots,(B_{n-3},C_{n-1},C_n)$ counterclockwise around $B_i$ by an angle $\beta_i$.
\end{fact}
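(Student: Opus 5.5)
The plan is to compute the action of $\tau_i$ on the compatible generators $c_1,\ldots,c_n$, read off its effect on the $\mathcal{B}$-triangle chain, and then read off the effect on the action-angle coordinates.

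\textbf{Action on generators.} The pants curve $b_i=(c_1\cdots c_{i+1})^{-1}$ separates $\surface$ into two sides. One side contains the punctures of $c_1,\ldots,c_{i+1}$, the other those of $c_{i+2},\ldots,c_n$. A Dehn twist along $b_i$ is supported in an annulus around $b_i$. So, up to an inner automorphism of $\pi_1\surface$ (irrelevant for the action on conjugacy classes), I will use the following representative of $\tau_i$:
\[
\tau_i(c_k)=c_k \ \ (k\le i+1),\qquad \tau_i(c_k)=b_i c_k b_i^{-1} \ \ (k\ge i+2).
\]
One checks that this respects the relation $c_1\cdots c_n=1$. The exponent sign (conjugation by $b_i$ versus $b_i^{-1}$) is fixed by the orientation convention for Dehn twists. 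I will choose it so that the formula comes out as stated. This sign bookkeeping is the only real subtlety.

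\textbf{Fibers of $\beta$ are preserved.} Each $\rho\circ\tau_i(c_k)$ is conjugate to $\rho(c_k)$, so $[\rho\circ\tau_i]$ stays in $\RepDT$ (the action preserves the component). For the pants curves:
\begin{itemize}
\item if $j\le i$, then $b_j$ only involves $c_1,\ldots,c_{j+1}$ and is fixed by $\tau_i$;
\item $b_i$ itself is fixed by $\tau_i$;
\item if $j>i$, then
\[
\tau_i(c_1\cdots c_{j+1})=b_i^{-1}\,b_i\,c_{i+2}\cdots c_{j+1}\,b_i^{-1}=c_{i+2}\cdots c_{j+1}\,b_i^{-1}.
\]
This is conjugate to $b_i^{-1}c_{i+2}\cdots c_{j+1}=c_1\cdots c_{j+1}$.
\end{itemize}
Hence all rotation angles $\beta_j$ are unchanged, and $\tau_i$ preserves the fibers of $\beta$.

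\textbf{Effect on the triangle chain.} Using the computation above, I compare the fixed points for $\rho\circ\tau_i$ with those for $\rho$:
\begin{itemize}
\item $C_1,\ldots,C_{i+1}$ and $B_1,\ldots,B_i$ are unchanged;
\item for $k\ge i+2$, the new $C_k$ is $\rho(b_i)C_k$;
\item for $j>i$, the new $B_j$ is the fixed point of $\rho(c_{i+2}\cdots c_{j+1}b_i^{-1})=\rho(b_i)\rho(c_1\cdots c_{j+1})\rho(b_i)^{-1}$, namely $\rho(b_i)B_j$.
\end{itemize}
Since $\rho(b_i)$ is the elliptic fixing $B_i$ with rotation angle $\beta_i$, the whole sub-chain $(B_i,C_{i+2},B_{i+1}),\ldots,(B_{n-3},C_{n-1},C_n)$ is rotated counterclockwise about $B_i$ by $\beta_i$, while the rest of the chain is fixed. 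This is the geometric statement.

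\textbf{Effect on angle coordinates.} Each $\gamma_j=\angle(C_{j+2},B_j,C_{j+1})$ with $j\neq i$ involves three points lying all in the fixed part or all in the rotated part of the chain. Since isometries preserve oriented angles, $\gamma_j\circ\tau_i=\gamma_j$. For $j=i$, the point $B_i$ is fixed, $C_{i+1}$ is unchanged and $C_{i+2}$ is rotated by $\beta_i$ about $B_i$. Hence $\gamma_i\circ\tau_i=\gamma_i+\beta_i$ modulo $2\pi$.

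\textbf{Main obstacle.} The expected difficulty is purely one of conventions. The sign of the twist, the counterclockwise rotation convention for $\rho(b_i)$ (note $b_i$ is oriented as an inverse product), and the orientation of $\gamma_i$ must be matched consistently so that the increment is $+\beta_i$ rather than $-\beta_i$. I would settle this by checking a single explicit example.
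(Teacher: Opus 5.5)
Your proof is correct. The paper gives no argument of its own for this fact; it only cites \cite[Lemma~2.15]{finite-orbits}. Your direct computation is the standard way to establish it: you realize $\tau_i$ as the partial conjugation $c_k\mapsto b_i c_k b_i^{-1}$ for $k\geq i+2$, so that the far part of the chain is moved by the rotation $\rho(b_i)$ about $B_i$ while the near part stays fixed.

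The sign ambiguity you flag is purely a matter of convention (direction of the twist, left versus right action). Your last step needs the convention that rotating the first point counterclockwise about the vertex increases the angle, and this is exactly the convention the paper uses in~\eqref{eq:rotated-angles-fibres-zero-measure}.
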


The chosen presentation of $\pi_1\surface$ gives a bijection $\mathcal{P}\leftrightarrow \{1,\ldots,n\}$. The interior angles in the $\mathcal{B}$-triangle chain of $[\rho]\in\RepDT$ can be expressed in terms of the angle vector $\alpha\in (0,2\pi)^n$ and the action coordinates $\beta_i$.
\begin{fact}[{\cite[Lemma~3.5]{action-angle}}]\label{fact:interior-angles-triangle-chains}
   Assume that we are in the regime $\sum_i\alpha_i > 2\pi(n-1)$ of~\eqref{eq:angle-condition}. With the conventions $B_0 = C_1$, $B_{n-2} = C_n$, $\beta_0 = 2\pi - \alpha_1$, and $\beta_{n-2} = \alpha_n$, the triangle $(B_i, C_{i+1}, B_{i+1})$ has interior angles $(\beta_i/2,\, \pi - \alpha_i/2,\, \pi - \beta_{i+1}/2)$.
\end{fact}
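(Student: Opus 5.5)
The plan is to derive the interior angles from the product relation in each pair of pants, using the classical reflection decomposition of elliptic elements. Recall that a rotation of angle $\theta$ about a point $P\in\mathbb H^2$ equals $\sigma_{L'}\sigma_L$, where $L,L'$ are geodesics through $P$ and the oriented angle from $L$ to $L'$ is $\theta/2$.

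\textbf{Step 1 (product relation).} Consider the pair of pants of $\mathcal B$ bounded by $b_{i}$, $c_{i+1}$ and $b_{i+1}$, with the conventions $B_0=C_1$ and $B_{n-2}=C_n$. Since $b_i=(c_1\cdots c_{i+1})^{-1}$, we get the relation
\[
b_{i}^{-1}\,c_{i+2}\,b_{i+1}=1
\]
in $\pi_1\surface$, up to the index shift matching the statement's labelling. The rotation angles of the three factors under $\rho$ are $2\pi-\beta_i$, $\alpha$ and $\beta_{i+1}$. The convention $\beta_0=2\pi-\alpha_1$ is exactly what makes the first triangle fit this pattern, because $b_0^{-1}=c_1$.

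\textbf{Step 2 (reflection decomposition).} Let $T=(B_i,C,B_{i+1})$ be the triangle of fixed points, which we assume non-degenerate. Write each of the three rotations as a product of reflections in the two sides of $T$ through its centre. Then $\sigma_{B_iC}$ and the other side reflections cancel pairwise, and the product of the three rotations is the identity. This is the classical statement that if $T$ has interior angles $a,b,c$ at its vertices, then the rotations by $2a,2b,2c$, taken with the orientation induced by $T$, multiply to the identity in the appropriate order.

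Conversely, a rotation is determined by its centre and its angle modulo $2\pi$. The interior angle at each vertex is therefore either $\theta/2$ or $\pi-\theta/2$, depending on whether the triangle is oriented clockwise or counterclockwise relative to the cyclic order imposed by the relation. With the two possible orientations, the candidate angle triples are
\[
(\pi-\beta_i/2,\ \alpha/2,\ \beta_{i+1}/2)
\qquad\text{and}\qquad
(\beta_i/2,\ \pi-\alpha/2,\ \pi-\beta_{i+1}/2).
\]

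\textbf{Step 3 (choosing the orientation).} This step is the main obstacle. In the regime $\sum\alpha_i>2\pi(n-1)$ we must exclude the first triple. I would argue by Gauss--Bonnet. A hyperbolic triangle has angle sum strictly less than $\pi$, and both triples cannot satisfy this simultaneously, since their sums add to $3\pi$. So it suffices to show that the second triple has sum $<\pi$ for every point of the moment polytope, that is,
\[
\beta_i-\beta_{i+1}+2\pi<\alpha.
\]
This should be precisely one of the defining inequalities of $\Delta_{\mathcal B}$ in this regime (cf.\ the description of the moment polytope in \cite[Lemma~3.5]{deroin-tholozan}). If the polytope inequalities are only available in a different form, I would instead use a continuity argument. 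The orientation of each triangle is locally constant on the connected set $\beta^{-1}(\oset{\circ}{\Delta}_{\mathcal B})$ of regular chains. One then checks it at a single explicit chain, or near a degenerate chain where the angle sum must tend to $\pi$.

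\textbf{Step 4 (degenerate triangles).} Finally, on the boundary of $\Delta_{\mathcal B}$ a triangle may collapse to a point, and the statement is then read as a limit. In the interior, Fact~\ref{fact:regular-triangle-chains} guarantees non-degeneracy, so the argument above applies throughout.
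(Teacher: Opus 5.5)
The paper gives no proof of this Fact; it imports it from \cite[Lemma~3.5]{action-angle}. Your derivation is correct. Your index shift is also right: the triangle is $(B_i,C_{i+2},B_{i+1})$, with angle $\pi-\alpha_{i+2}/2$ at $C_{i+2}$. Steps 1--2 reduce everything to the single inequality $\beta_{i+1}-\beta_i>2\pi-\alpha_{i+2}$. The hedge in Step 3 is unnecessary: with $\beta_0=2\pi-\alpha_1$ and $\beta_{n-2}=\alpha_n$, the Deroin--Tholozan polytope in this regime is exactly $\{\beta_{i+1}-\beta_i\ge 2\pi-\alpha_{i+2},\ 0\le i\le n-3\}$. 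These $n-2$ inequalities telescope to $\sum_k\alpha_k\ge 2\pi(n-1)$, which is why the polytope is a non-empty simplex precisely in this regime. Moreover, equality is impossible for a non-degenerate triangle, since the two candidate sums would then be $\pi$ and $2\pi$. So your argument applies verbatim to non-degenerate triangles of singular chains, and Step 4 needs no limiting argument. That case matters, because the paper uses the Fact on singular fibres in Section~\ref{sec:no-atoms}.

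Your fallback would not work on its own, because the regime hypothesis does not decide orientations.
\begin{itemize}
\item It orients only the two end triangles: there $\alpha_1+\alpha_2>2\pi$ (resp.\ $\alpha_{n-1}+\alpha_n>2\pi$) makes your first triple's sum exceed $\pi$.
\item For a middle triangle, both orientations are compatible with the angle data. Gluing a chain with a counterclockwise middle triangle gives a point of $\Rep_\alpha(\surface,\psl)$ outside the DT component.
\item Continuity only reduces the question to a single point of the component, and you still need a point known to be DT.
\item Degenerating a triangle does not select the orientation. At the degenerate configuration $\beta_{i+1}-\beta_i+\alpha_{i+2}\in\{0,2\pi\}$, and deciding which value occurs is the polytope inequality again.
\end{itemize}

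If you prefer not to quote the polytope, total ellipticity supplies the global input. Take $i\le n-4$; the case $i=n-3$ is an end triangle. The restriction of $\rho$ to the sub-sphere bounded by $b_i$ and containing $c_{i+2},\dots,c_n$ is DT by Remark~\ref{rem:sub-spheres}. Its angle sum $2\pi-\beta_i+\sum_{k\ge i+2}\alpha_k$ exceeds $2\pi(n-i-2)\ge 4\pi$, so it satisfies the upper inequality of~\eqref{eq:angle-condition}. That inequality rearranges to $\beta_i<\alpha_{i+2}-\sum_{k\ge i+3}(2\pi-\alpha_k)<\alpha_{i+2}$. Finally, $\beta_i<\alpha_{i+2}$ already forces your first triple to have sum $>\pi$.
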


\subsection{Measure disintegration}\label{sec:disintegration}
Let $(X, m)$ be a Borel probability space. Given a surjective measurable map $f\colon X\to Y$ to a Borel mesurable space $Y$, we equip $Y$ with the pushforward probability measure $f_\ast m$. The \emph{disintegration} of $m$ along $f$ is given by a system of probability measures $\{m_y\}_{y\in Y}$ on $X$ such that
\[
m=\int_Y m_y \,d(f_\ast m)(y).
\]
The measures $m_y$ are called \emph{conditional measures}. They have the following properties:
\begin{enumerate}
    \item (Support) The measures $m_y$ are supported on the fiber $f^{-1}(y)$ for $f_\ast m$-almost every $y\in Y$.
    \item (Invariance) If $m$ and $f$ are invariant under a group action on $X$, then the measures $m_y$ are also invariant for $f_\ast m$-almost every $y\in Y$.
    \item (Uniqueness) The measures $m_y$ are uniquely determined for $f_\ast m$-almost every $y\in Y$.
\end{enumerate}

For more details and for a proof of the disintegration theorem, the reader may consult~\cite[Theorem~5.14]{einsiedler-ward}.
\section{No atoms in the base}\label{sec:no-atoms}
\subsection{Overview}
The goal of this section is to prove the following result.
\begin{prop}\label{prop:pushforward-no-atoms}
If $\mu$ is an ergodic, atom-free Borel probability measure on $\RepDT$ and $\beta\colon \RepDT\to \Delta_\mathcal{B}$ is the moment map associated to a chained pants decomposition $\mathcal{B}$ of $\surface$, then $\beta_\ast\mu$ has no atoms.
\end{prop}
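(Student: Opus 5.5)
We argue by contradiction. Suppose $\beta_\ast\mu$ has an atom at some $t\in\Delta_\mathcal{B}$, so the fiber $F=\beta^{-1}(t)$ satisfies $\mu(F)=c>0$. The plan is to produce infinitely many mapping classes $g_1,g_2,\ldots\in\PMod(\surface)$ whose images $g_k F$ are \emph{almost} pairwise disjoint. Since $\mu$ is invariant, $\mu(g_kF)=c$ for every $k$. Once we know $\mu(g_kF\cap g_lF)=0$ for $k\neq l$, additivity gives $\mu\bigl(\bigcup_{k\le N} g_kF\bigr)=Nc$, which exceeds $1$ for large $N$. That is the desired contradiction.

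Negligibility of intersections comes from the hypothesis. Each $g_kF=(\beta\circ g_k^{-1})^{-1}(t)$ is a fiber of the moment map associated to the pants decomposition $g_k\mathcal{B}$. Because $\mu$ has no atoms, it suffices to arrange that $g_kF\cap g_lF$ is a finite set, or more generally a countable one. A softer fallback is to show that the intersection lies in a proper real-analytic subvariety, and then handle that subvariety by induction on its dimension. Ergodicity would let us assume $\mu$ gives zero mass to any invariant proper subvariety: if it gave positive mass, that subvariety would carry full measure.

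For the construction, take $g_k=\sigma^k$, where $\sigma$ is a Dehn twist along a curve $c_ic_j$ that is not in $\mathcal{B}$ but crosses some pants curve. The natural first choice is a curve like $c_2c_3$, which meets $b_1$ and $b_2$. By Fact~\ref{fact:fixed-points-Dehn-twists} and Fact~\ref{fact:action-of-Dehn-twist-angle-coordinates} applied to the pants decomposition containing $c_ic_j$, the twist $\sigma$ rotates a sub-chain of the triangle chain around the fixed point of $\rho(c_ic_j)$ by the rotation angle of $c_ic_j$. Suppose $[\rho]\in F\cap\sigma^{m}F$ with $m\neq0$. Then both $\rho$ and $\sigma^{-m}\rho$ have $\mathcal{B}$-action coordinates equal to $t$. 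Using hyperbolic trigonometry in the triangle chains (Fact~\ref{fact:interior-angles-triangle-chains}), we express the rotation angle of $\rho(b_i)$ after $m$ twists as an explicit function of the angle coordinates and of $m$ times the rotation angle $\theta$ of $c_ic_j$. Requiring this to equal $t_i$ gives an equation in $m\theta$ modulo $2\pi$ whose solutions are isolated unless the configuration is degenerate. The degenerate configurations are the fixed points of $\sigma$ described in Fact~\ref{fact:fixed-points-Dehn-twists}, together with a few coincidences among vertices. In the generic case this leaves finitely many possibilities on each fiber, possibly after intersecting with the other twists' fibers.

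The main obstacle will be this trigonometric step: showing that for $m\neq0$ the set $F\cap\sigma^mF$ is $\mu$-null. Two sub-cases need care. The first is when $\theta$ is a rational multiple of $2\pi$ on large parts of $F$, so that $\sigma^m$ acts trivially there. The second is when the fiber is not Lagrangian, i.e.\ $t\in\partial\Delta_\mathcal{B}$ and the triangle chain has degenerate triangles.

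For the first sub-case, we would choose $\sigma$ so that $\theta$ is a non-constant analytic function on $F$, so its rational level sets are countably many proper subvarieties. Those subvarieties are then disposed of by the same argument applied inside them, by induction on dimension.

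For the second sub-case, we would choose the curve $c_ic_j$ adapted to the non-degenerate part of the chain. This way $\sigma$ genuinely moves the relevant vertices, while the coincident vertices only reduce dimensions.

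The detailed computations would be deferred to an appendix, as the paper announces.
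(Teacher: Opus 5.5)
Your outer argument, almost-disjoint iterates of a fiber under a Dehn twist along $c_ic_j$, is the paper's Claim~\ref{claim:iteration-mapping-class-implies-zero-measure}. The gap is in making the intersections negligible: the reduction ``it suffices that $F\cap\sigma^mF$ is countable'' fails for $n\geq 5$.

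Take $F$ regular and $\sigma$ the twist along $c_2c_3$. Since $\sigma(c_2)\sigma(c_3)=c_2c_3$, it fixes $b_2,\dots,b_{n-3}$. Moreover $\beta_1(\sigma^m[\rho])$ depends only on $\gamma_1([\rho])$, because the configuration $C_1,C_2,C_3$, and hence $Y_1$ and $\theta$, is determined by $\gamma_1$ and the fixed actions. So $F\cap\sigma^{-m}F$ is a union of level sets $\{\gamma_1=s\}$, which are $(n-4)$-dimensional sub-tori. It contains, for instance, every level set on which $m\theta\in 2\pi\Z$, since $\sigma^m$ fixes those pointwise. Atom-freeness says nothing about such sets.

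Your fallback does not close the gap either:
\begin{itemize}
\item The subvarieties that arise are not $\PMod(\surface)$-invariant.
\item Even for an invariant proper subvariety, or the countable union of translates of one, ergodicity only gives measure $0$ or $1$. Measure $1$ is no contradiction: a priori $\mu$ could be singular and concentrated there. Ruling that out is essentially the theorem itself, or the orbit-closure classification that the paper deliberately avoids.
\item Applying ``the same argument inside them, by induction on dimension'' needs a class of subsets that is stable under the procedure, with a twist adapted to each member. You do not specify one.
\end{itemize}

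The paper supplies exactly this. It runs a decreasing induction over the sub-tori $T_\Theta$ of a fiber obtained by prescribing the first $\ell$ angle coordinates. For $\Theta$ of length $\ell_0-1$, it uses the twist along $c_ic_j$, where $C_i,C_j$ are the exterior vertices of the two non-degenerate triangles meeting at the $\ell_0$-th angle. Claim~\ref{claim:comparaison-B-triangle-chains} shows that if $[\rho]$ and $\tau^m[\rho]$ both lie in $T_\Theta$, they differ only in $\gamma_{\ell_0}$. Hence $T_\Theta\cap\tau^{-m}T_\Theta\subset\bigcup_{s\in\mathcal S}T_{(\Theta\vert s)}$, which is null by the inductive hypothesis.

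Proving $\mathcal S$ countable is the second missing piece. Your ``equation in $m\theta$ with isolated solutions'' treats $\theta$ as independent, but it varies with the point, and the real difficulty is showing the constraint is not an identity. The paper handles this as follows:
\begin{itemize}
\item It makes the problem one-variable: $\upsilon_1$ and $\eta=\angle(C_j,Y_1,B_{j-1})$ are both functions of $\gamma_{\ell_0}$.
\item An isometry of triangles gives $\eta\in\{-m\upsilon_1/2,\,-m\upsilon_1/2+\pi\}$.
\item This becomes a polynomial equation of degree $2m+4$ in $\cos(\upsilon_1/2)$ whose leading coefficient is computed to be positive, so there are finitely many solutions for each $m$.
\item The values where $\upsilon_1\in\pi\Q$ or $\gamma_{\ell_0}\in\{0,\pi\}$ are added to $\mathcal S$ as well.
\end{itemize}
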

The proof proceeds by induction on the number of punctures of $\surface$. The strategy is outlined in Section~\ref{sec:strategy-proof-fibers-zero-measure}, and the induction step is carried out in Section~\ref{sec:induction-step-fibers-zero-measure}. Some of the more involved computations are deferred to Appendix~\ref{apx:trig-computations}.
\begin{rmk}
We are assuming that $\mathcal{B}$ is chained in the statement of Proposition~\ref{prop:pushforward-no-atoms}; this is because our proof uses the triangle chains model for DT representations which has only been formally developed for chained pants decompositions. Nevertheless, we expect the statement of Proposition~\ref{prop:pushforward-no-atoms} to hold for all pants decompositions.
\end{rmk}

\subsection{Strategy of the proof}\label{sec:strategy-proof-fibers-zero-measure}
Let us pick an arbitrary point $(\beta_1,\ldots,\beta_{n-3})\in\Delta_\mathcal{B}$ and denote by $T$ the fiber over $(\beta_1\ldots,\beta_{n-3})$. Our goal is to prove that $\mu(T)=0$. To do so, we will use the following elementary result.
\begin{claim}\label{claim:iteration-mapping-class-implies-zero-measure}
If $A\subset \RepDT$ is a measurable set and if there exists a mapping class $f\in\PMod(\surface)$ such that $\mu\big(A\cap f^{m}(A)\big)=0$ for every $m\geq 1$, then $\mu(A)=0$.
\end{claim}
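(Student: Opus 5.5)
This is a Poincaré-recurrence / pigeonhole argument, and it needs only that $\mu$ is a finite measure invariant under $f$; ergodicity and the absence of atoms play no role. Since $\mu$ is $\PMod(\surface)$-invariant, every iterate $f^k$ preserves $\mu$. We show that the translates $A, f(A), f^2(A),\ldots$ are pairwise disjoint up to $\mu$-null sets, and then use $\mu(\RepDT)=1$.

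\textbf{Step 1 (almost disjointness).} Fix $0\le k<l$. Apply $f^{-k}$ to the set $f^k(A)\cap f^l(A)$ and use invariance of $\mu$:
\[
\mu\big(f^k(A)\cap f^l(A)\big)=\mu\big(f^{-k}(f^k(A)\cap f^l(A))\big)=\mu\big(A\cap f^{l-k}(A)\big)=0,
\]
where the last equality is the hypothesis with $m=l-k\ge 1$. Measurability is not an issue, since $f$ acts by homeomorphisms of $\RepDT$ and so maps Borel sets to Borel sets.

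\textbf{Step 2 (summing).} Fix $N\geq 1$. Set $E_k=f^k(A)\setminus\bigcup_{j<k}f^j(A)$ for $0\le k\le N-1$. These sets are pairwise disjoint and their union is $\bigcup_{k=0}^{N-1}f^k(A)$. By Step 1, each $E_k$ differs from $f^k(A)$ by a null set, so $\mu(E_k)=\mu(f^k(A))$, and by invariance $\mu(f^k(A))=\mu(A)$. Therefore
\[
N\mu(A)=\sum_{k=0}^{N-1}\mu\big(f^k(A)\big)=\mu\Big(\bigcup_{k=0}^{N-1}f^k(A)\Big)\le 1 .
\]
Letting $N\to\infty$ forces $\mu(A)=0$.

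There is no real obstacle here. The only care needed is in Step 1: one should translate by $f^{-k}$ and use invariance, rather than assume the hypothesis applies directly to arbitrary pairs of iterates.
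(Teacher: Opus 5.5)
Your proposal is correct and follows the paper's argument: use invariance under $f^{-k}$ to get that the iterates $f^k(A)$ are pairwise almost disjoint, then add up their equal measures against the total mass $1$. Your finite-$N$ disjointification simply makes explicit the additivity step that the paper applies directly to the countable union $\bigcup_{m\geq 0} f^m(A)$.
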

\begin{proofclaim}
By invariance of $\mu$, for every $m>\ell\geq 0$, we have
\[
\mu\big(f^\ell(A)\cap f^m(A)\big)=\mu\big(A\cap f^{m-\ell}(A)\big)=0.
\]
Therefore,
\[
\mu\left(\bigcup_{m\geq 0} f^m(A)\right)=\sum_{m\geq 0}\mu\big(f^m(A)\big)=\sum_{m\geq 0}\mu(A),
\]
from which we deduce that $\mu(A)=0$ because $\mu$ is a probability measure.
\end{proofclaim}

Since $T$ is a fiber of the moment map $\beta$, it is diffeomorphic to a torus $(\R/2\pi\Z)^{k-1}$ for some $1\leq k\leq n-2$. When $k=1$, then $T$ is a point and $\mu(T)=0$ because $\mu$ is atom-free. From now on, let us assume that $2\leq k\leq n-2$. Recall that $k$ also corresponds to the number of non-degenerate triangles in the $\mathcal{B}$-triangle chain of any point in $T$. The $k-1$ angles ``between'' these $k$ non-degenerate triangles are angle coordinates $(\gamma_1,\ldots,\gamma_{k-1})$ which parametrize $T$ as the $(k-1)$-dimensional torus $(\R/2\pi\Z)^{k-1}$.

We will use the following notation. If $0\leq \ell\leq k-1$ is an integer and $\Theta=(\theta_1,\ldots,\theta_\ell)\in (\R/2\pi\Z)^\ell$ is an angle vector (take $\Theta=\emptyset$ when $\ell=0$), then we define the sub-torus $T_\Theta\subset T$ by
\[
T_\Theta=T\cap \bigcap_{i=1}^\ell \left\{\gamma_i=\theta_i\right\}.
\]
In other words, $T_\Theta$ is the set of points in $T$ for which the values of the angle coordinates $\gamma_1,\ldots,\gamma_\ell$ have been prescribed. We said that $T_\Theta$ is a sub-torus of $T$ because it can be parametrized by $\gamma_{\ell+1},\ldots, \gamma_{k-1}$ as the $(k-\ell-1)$-dimensional torus $(\R/2\pi\Z)^{k-\ell-1}$. In particular, when $\ell=0$, then $T_\emptyset = T$ and when $\ell=k-1$, then $T_\Theta$ is a point and $\mu(T_\Theta)=0$.

We will prove that $\mu(T)=0$ using a decreasing induction on the value of $\ell$, starting with the base case $\ell=k-1$. To prove the induction step, we will assume that $\mu(T_\Theta)=0$ for every angle vector $\Theta$ of length at least $\ell_0$ for some $1\leq \ell_0\leq k-1$. This will be our inductive hypothesis.
\begin{assumption}[Inductive hypothesis]\label{ass:inductive-hypothesis-fibers-zero-measure}
The equality $\mu(T_\Theta)=0$ holds for every angle vector $\Theta$ of length at least $\ell_0$.
\end{assumption}

\subsection{The induction step}\label{sec:induction-step-fibers-zero-measure}
We fix a system of geometric generators $c_1,\ldots,c_n$ of $\pi_1\surface$ compatible with the pants decomposition $\mathcal{B}$.

Let now $\Theta\in (\R/2\pi\Z)^{\ell_0 -1}$ be an angle vector of length $\ell_0-1$. 
From the definition of $T_\Theta$, the values of $\gamma_1,\ldots, \gamma_{\ell_0-1}$ are prescribed for every point in $T_\Theta$. The next angle coordinate $\gamma_{\ell_0}$ measures the angle between two consecutive non-degenerate triangles in the $\mathcal{B}$-triangle chain of any point in $T_\Theta$. The first of the two triangles has vertices $(B_{i-2},C_i,B_{i-1})$ and the second has vertices $(B_{j-2},C_j,B_{j-1})$, with the convention that $B_0=C_1$ when $i=2$ and $B_{n-2}=C_n$ when $j=n-1$ (see Figure~\ref{fig:triangle-chain-2}). When $T$ is a regular fiber of $\beta$ (which is the case when $(\beta_1,\ldots,\beta_{n-3})$ lies in $\oset{\circ}{\Delta}_\mathcal{B}$), then $j=i+1$. However, when $T$ is a singular fiber, then it is possible that $j>i+1$. In that case, the vertices $B_{i-1},B_i,\ldots, B_{j-2}$ and $C_{i+1},C_{i+2},\ldots, C_{j-1}$ all coincide.

\begin{figure}[h]
\centering
\begin{tikzpicture}[font=\sffamily, scale=1.1]
\node[anchor=south west,inner sep=0] at (0,0) {\includegraphics[width=10.8cm]{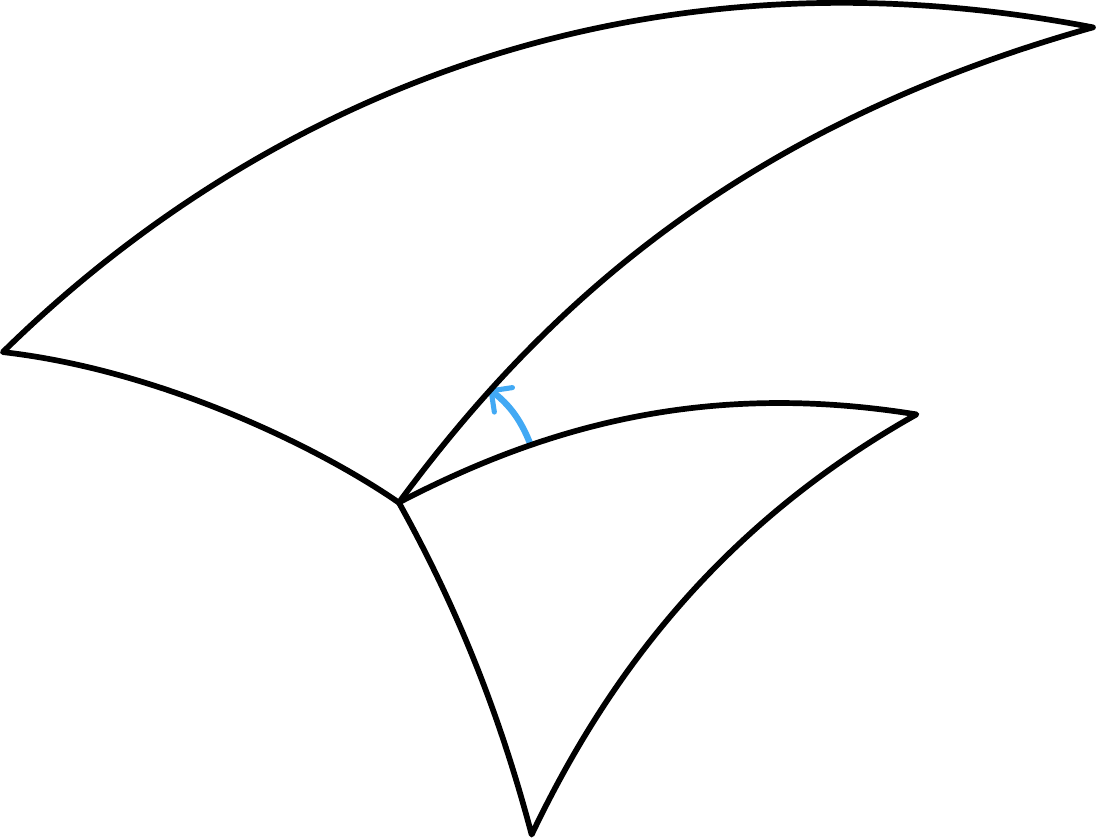}};

\begin{scope}
\fill (9.8,7.25) circle (0.07) node[right]{$C_i$};
\fill (8.24,3.78) circle (0.07) node[right]{$C_j$};
\end{scope}

\begin{scope}[apricot]
\fill (0.06,4.36) circle (0.07) node[left]{$B_{i-2}$};
\fill (3.6,3) circle (0.07) node[left]{$B_{i-1}=\cdots=B_{j-2}$};
\fill (4.77,0.04) circle (0.07) node[below]{$B_{j-1}$};
\draw (3.6,2.6) node[left]{$=C_{i+1}=\cdots=C_{j-1}$};
\end{scope}

\begin{scope}[sky]
\draw (5,3.9) node{$\gamma_{\ell_0}$};
\end{scope}
\end{tikzpicture}
\caption{The consecutive triangles $(B_{i-2},C_i,B_{i-1})$ and $(B_{j-2},C_j,B_{j-1})$.}
\label{fig:triangle-chain-2}
\end{figure}

We will consider the Dehn twist $\tau\in\PMod(\surface)$ along the simple closed curve represented by the fundamental group element $c_ic_j\in\pi_1\surface$. The Dehn twist $\tau$ will play the role of the mapping class $f$ from Claim~\ref{claim:iteration-mapping-class-implies-zero-measure}.
\begin{lem}\label{lem:induction-step-fibers-zero-measure}
There exists a countable set $\mathcal{S}\subset \R/2\pi \Z$ such that for every $m\geq 1$,
\[
T_\Theta\cap \tau^{-m}(T_\Theta)\subset \bigcup_{s\in\mathcal{S}} T_{(\Theta\vert s)},
\]
where $(\Theta\vert s)$ denotes the angle vector in $(\R/2\pi \Z)^{\ell_0}$ obtained by appending $s$ after the last component of $\Theta$.
\end{lem}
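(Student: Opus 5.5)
We study how $\tau$, the Dehn twist along $c_ic_j$, acts on the $\mathcal{B}$-triangle chain of a point of $T_\Theta$. The curve $c_ic_j$ bounds a disk containing only the punctures $i,\dots,j$. By the coincidences $C_{i+1}=\cdots=C_{j-1}=B_{i-1}=\cdots=B_{j-2}$ valid on $T$, the element $\rho(c_ic_j)$, which is the product of $\rho(c_i),\ldots,\rho(c_j)$ after the degenerate middle factors are absorbed, is elliptic with a fixed point $P$ in the upper half-plane. The point $P$ is determined by the two non-degenerate triangles $(B_{i-2},C_i,B_{i-1})$ and $(B_{j-2},C_j,B_{j-1})$ adjacent at the common vertex $B_{i-1}$. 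The Dehn twist $\tau$ acts as a change of presentation: $\rho(c_i)$ and $\rho(c_j)$, and the degenerate middle, are conjugated by $\rho(c_ic_j)$. Geometrically, this rotates the vertices $C_i,\ldots,C_j$ about $P$ by the rotation angle $\delta$ of $\rho(c_ic_j)$, while the rest of the chain stays fixed. Rewriting this in the $\mathcal{B}$-chain model, $\tau$ moves the triangles $(B_{i-2},C_i,B_{i-1})$ and $(B_{j-2},C_j,B_{j-1})$, and it fixes the vertices $B_{i-2}$ and $B_{j-1}$ together with everything before and after. In particular, $\tau^m$ preserves each fiber of $\beta_k$ for $k\neq i-1,\ldots,j-2$, but it may change the action coordinate $\beta_{i-1}$ (the rotation angle at $B_{i-1}$).

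The key quantity is the following. For a point $x\in T_\Theta$, set $\theta=\gamma_{\ell_0}(x)$. Then $\theta$ determines, through hyperbolic trigonometry in the quadrilateral $B_{i-2},C_i,C_j,B_{j-1}$, the distance $d(C_i,C_j)$ and hence the angle $\delta(\theta)$. All other data are fixed on $T_\Theta$: the side lengths and angles of the two triangles are fixed by $\alpha$ and $\beta$ via Fact~\ref{fact:interior-angles-triangle-chains}, and $\gamma_1,\ldots,\gamma_{\ell_0-1}$ are fixed by $\Theta$. The condition $\tau^m(x)\in T_\Theta$ requires in particular that $\beta_{i-1}(\tau^m x)=\beta_{i-1}(x)$. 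Equivalently, $B_{i-1}$ must be carried to a point whose distances to $B_{i-2}$ and $B_{j-1}$ keep their original values, and the earlier $\gamma$'s must be preserved. Since $B_{i-2}$ and $B_{j-1}$ are fixed by $\tau$, rotating the configuration $(C_i,C_j)$ about $P$ by $m\delta$ keeps the new chain inside $T_\Theta$ only if $m\delta(\theta)$ lies in a discrete set. More precisely, I expect this to force $m\delta(\theta)\in 2\pi\Z$, or more generally one of finitely many real-analytic equations $F_m(\theta)=0$, where $F_m$ is real-analytic in $\theta$. We then take $\mathcal{S}$ to be the union over $m$ of the zero sets of those $F_m$ that are not identically zero. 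Each such zero set is finite on the compact circle, so $\mathcal{S}$ is countable.

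The main obstacle is to show that $F_m$ (equivalently $\theta\mapsto \delta(\theta)$) is not constant. Then $\{\theta: m\delta(\theta)\in 2\pi\Z\}$ is a countable union of zero sets of nonconstant analytic functions, hence countable. For this I would compute $\cos$ of the half-rotation angle of $\rho(c_ic_j)$ via the trace formula. Composing the two elliptic elements with fixed points $C_i$ and $C_j$ and rotation angles $\alpha_i$ and the effective angle of $\rho(c_{i+1}\cdots c_j)$ gives
\[
\cos\tfrac{\delta}{2}= \cos\tfrac{a}{2}\cos\tfrac{b}{2}-\sin\tfrac{a}{2}\sin\tfrac{b}{2}\cosh d(C_i,C_j).
\]
Here $\cosh d(C_i,C_j)$ is obtained by the hyperbolic law of cosines in the triangle $C_i B_{i-1} C_j$: it is affine in $\cos\theta$ with nonzero slope $\sinh d(B_{i-1},C_i)\sinh d(B_{i-1},C_j)$. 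This slope is nonzero because the two triangles are non-degenerate, so $C_i,C_j\neq B_{i-1}$. Hence $\delta$ is a nonconstant analytic function of $\theta$. One must also handle the boundary cases $i=2$ or $j=n-1$, where $B_0=C_1$ or $B_{n-2}=C_n$, but the same computation applies. Finally, checking exactly which constraint membership in $T_\Theta$ imposes is where the detailed trigonometry (deferred to the appendix) would enter. If the rotation-angle condition is not exactly $m\delta\in 2\pi\Z$, I would instead directly write $\beta_{i-1}\circ\tau^m-\beta_{i-1}$ as an analytic function of $\theta$ and show that it is nonconstant, by evaluating it near a degenerate $\theta$, for instance the $\theta$ that maximizes $d(C_i,C_j)$.
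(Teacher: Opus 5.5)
\textbf{There is a genuine gap.} It sits in the one step that carries the content of the lemma: you never determine what $\tau^m x\in T_\Theta$ forces. Moreover, a single preserved action coordinate does \emph{not} force $m\delta(\theta)\in 2\pi\Z$.

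Here is what actually happens. $\tau^m$ fixes $Y_1$ (your $P$), $B_{i-2}$ and $B_{j-1}$, and rotates $C_i,C_j$ about $Y_1$ by $m\upsilon_1$. Suppose $\beta_{j-2}$ is preserved; this is the paper's choice, and your use of $\beta_{i-1}$ together with $B_{i-2}$ is symmetric. Then $(B_{j-2}',C_j',B_{j-1})$ is congruent to $(B_{j-2},C_j,B_{j-1})$, so $d(C_j',B_{j-1})=d(C_j,B_{j-1})$. Also $d(C_j',Y_1)=d(C_j,Y_1)$, because $C_j'$ is a rotate of $C_j$ about $Y_1$. Provided $Y_1\neq B_{j-1}$ (which the paper secures by excluding $\gamma_{\ell_0}=\pi$), $C_j'$ must be one of two points:
\begin{itemize}
\item $C_j$ itself, i.e.\ $m\upsilon_1\in 2\pi\Z$;
\item the mirror image of $C_j$ in the geodesic through $Y_1$ and $B_{j-1}$, i.e.\ $\eta=\angle(C_j,Y_1,B_{j-1})\equiv -m\upsilon_1/2 \pmod{\pi}$.
\end{itemize}

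What you call the main obstacle, nonconstancy of $\delta$, only disposes of the first branch. It is the easy part: Claim~\ref{claim:exclude-rationals}, coming from $\cos(\upsilon_1/2)=k_1\cos\gamma_{\ell_0}+k_2$ with $k_1\neq 0$. The real work is showing that the reflection branch holds for only finitely many $\theta$, for each $m$. The paper does this by combining the four-parts formula in $(Y_1,C_j,B_{j-1})$ with Chebyshev polynomials. This yields a polynomial equation of degree $2m+4$ in $\cos(\upsilon_1/2)$ with positive leading coefficient (Appendix~\ref{apx:trig-computations}).

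Your fallback, evaluating $\beta_{i-1}\circ\tau^m-\beta_{i-1}$ near a degenerate $\theta$, is not an argument. For every $m\geq 1$ it would have to exhibit a $\theta$ at which the rotation by $m\upsilon_1(\theta)$ about $Y_1$ neither fixes $C_i$ nor maps it to its mirror image in the geodesic $Y_1B_{i-2}$. Nothing in the proposal controls this. Likewise, building $\mathcal S$ from ``those $F_m$ that are not identically zero'' presupposes exactly what has to be proved.

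\textbf{The case $\ell_0\geq 2$ can be rescued.} There your expectation is correct and needs no trigonometry. The points $C_1,\ldots,C_{i-1}$ and $B_{i-2}$ are unmoved by $\tau^m$. Since the action coordinates are preserved, $(B_{i-2},C_i',B_{i-1}')$ is congruent to $(B_{i-2},C_i,B_{i-1})$. So preserving $\gamma_{\ell_0-1}$ forces $C_i'=C_i$. Since $C_i\neq Y_1$ whenever $\gamma_{\ell_0}\neq 0$, this gives $m\upsilon_1\in 2\pi\Z$.

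This does not help when $\ell_0=1$, i.e.\ $\Theta=\emptyset$ and $T_\Theta=T$, which is precisely the step that yields $\mu(T)=0$. There only action coordinates constrain $\tau^m x$, and the reflection branch must be dealt with as above.

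\textbf{Your description of $\tau$ is wrong when $j>i+1$.}
\begin{itemize}
\item The curve $c_ic_j$ bounds a disk containing only the punctures $i$ and $j$, not $i,\ldots,j$.
\item The middle generators $\rho(c_{i+1}),\ldots,\rho(c_{j-1})$ are nontrivial rotations about $B_{i-1}$ and cannot be ``absorbed''. So in general $\rho(c_ic_j)\neq\rho(c_i\cdots c_j)$.
\item The trace formula for $\cos(\delta/2)$ must use the angles $\alpha_i,\alpha_j$ at $C_i,C_j$, not an effective angle of $\rho(c_{i+1}\cdots c_j)$.
\end{itemize}
This does not change the main line, since only $\rho(c_ic_j)$ enters, but it should be corrected.
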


An immediate consequence of Lemma~\ref{lem:induction-step-fibers-zero-measure} is that $\mu\big(T_\Theta\cap \tau^{-m}(T_\Theta)\big)=0$ for every $m\geq 1$, because $\mu(T_{(\Theta\vert s)})=0$ for every $s\in\mathcal{S}$ by the inductive hypothesis (Assumption~\ref{ass:inductive-hypothesis-fibers-zero-measure}). So, using Claim~\ref{claim:iteration-mapping-class-implies-zero-measure}, we conclude $\mu(T_\Theta)=0$. This proves the induction step. In order to complete the proof of Proposition~\ref{prop:pushforward-no-atoms}, it thus remains to prove Lemma~\ref{lem:induction-step-fibers-zero-measure}.

\begin{proof}[Proof of Lemma~\ref{lem:induction-step-fibers-zero-measure}]
Fix an integer $m\geq 1$ and pick any point $[\rho]\in T_\Theta\cap \tau^{-m}(T_\Theta)$. Since $[\rho]$ and $\tau^m[\rho]$ both belong to $T_\Theta\subset T$, it holds $\beta\big([\rho]\big)=\beta\big(\tau^m[\rho]\big)=(\beta_1,\ldots,\beta_{n-3})$ and $\gamma_i\big([\rho]\big)=\gamma_i\big(\tau^m [\rho]\big)$ for $i=1,\ldots, \ell_0-1$. So, a priori, $[\rho]$ and $\tau^m[\rho]$ only differ by the values of the angle coordinates $\gamma_{\ell_0},\ldots,\gamma_{k-1}$.

\begin{claim}\label{claim:comparaison-B-triangle-chains}
Actually, $[\rho]$ and $\tau^m[\rho]$ only differ by the value of $\gamma_{\ell_0}$. In other words, $[\rho]$ and $\tau^m[\rho]$ have the same angle coordinates $\gamma_{\ell_0+1},\ldots,\gamma_{k-1}$. 
\end{claim}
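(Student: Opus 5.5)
The plan is to understand how $\tau$, the Dehn twist along the curve represented by $c_ic_j$, acts on $\mathcal{B}$-triangle chains, and then read off the angle coordinates. The curve $c_ic_j$ encloses exactly the punctures $i,\ldots,j$ (as in Fact~\ref{fact:fixed-points-Dehn-twists}). Since $C_{i+1},\ldots,C_{j-1}$ all coincide with the shared vertex $B_{i-1}=\cdots=B_{j-2}$, this curve is, up to the presentation, a curve living in the union of the pairs of pants corresponding to the triangles $(B_{i-2},C_i,B_{i-1})$ and $(B_{j-2},C_j,B_{j-1})$. It is disjoint from all pants curves $b_r$ with $r\leq i-2$ or $r\geq j-1$.

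First I would check that $\tau$ acts trivially on every generator $c_r$ with $r<i$ or $r>j$, up to a global conjugation, while on $c_i,\ldots,c_j$ it acts by conjugation by a power of $c_ic_j$ (or of the product $c_i\cdots c_j$). In terms of representations, $\tau[\rho]$ is then represented by a $\rho'$ that agrees with $\rho$ on $c_1,\ldots,c_{i-1}$ and on $c_{j+1},\ldots,c_n$. Consequently the fixed points $C_r$ for $r\notin\{i,\ldots,j\}$ are unchanged. The shared vertices $B_r$ for $r\leq i-2$ are unchanged, and so is $B_{j-1}$, since it is the fixed point of $\rho(c_1\cdots c_j)^{-1}$ and $c_1\cdots c_j$ is fixed by $\tau$. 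The same holds for every $B_r$ with $r\geq j-1$. Iterating, the same is true for $\tau^m[\rho]$, with a common normalization of the conjugation.

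Next I would observe that all triangles of the chain after $(B_{j-2},C_j,B_{j-1})$, namely those with vertices among $B_{j-1},C_{j+1},B_j,\ldots,C_n$, are literally the same for $[\rho]$ and $\tau^m[\rho]$. The angle coordinates $\gamma_{\ell_0+1},\ldots,\gamma_{k-1}$ are the oriented angles between consecutive non-degenerate triangles from the triangle $(B_{j-2},C_j,B_{j-1})$ onward. Each such angle is measured at a shared vertex $B_r$ with $r\geq j-1$, between triangles sharing that vertex. The only one that could involve moving data is the angle at $B_{j-1}$ between $(B_{j-2},C_j,B_{j-1})$ and the next triangle. That angle depends only on the direction of the side $B_{j-1}B_{j-2}$ or $B_{j-1}C_j$ relative to the next triangle.

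To control that angle, I would use that the second triangle $(B_{j-2},C_j,B_{j-1})$ has fixed interior angles by Fact~\ref{fact:interior-angles-triangle-chains} and the fixed value of $\beta$, so its shape is fixed. Since $B_{j-1}$ is fixed and $T_\Theta$ is preserved, only the sides through $B_{j-1}$ could rotate. I expect the main obstacle to be this step: showing that the twist along $c_ic_j$ moves the middle vertex $B_{i-1}=\cdots=B_{j-2}$ and $C_i$ while keeping the position of the triangle at $B_{j-1}$ rigid. My first approach would be direct: $\tau$ conjugates $\rho(c_j)$ by a power of $\rho(c_ic_j)$, which changes $C_j$. I would then instead exploit the hypothesis that both points lie in $T_\Theta$. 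Since $\gamma_1,\ldots,\gamma_{\ell_0-1}$ agree, the first part of the chain up to $B_{i-1}$ is identical. Since $B_{j-1}$ and everything after it also agree, the segment $B_{j-2}B_{j-1}$ has the same endpoints in both chains. The triangle at $B_{j-1}$, having fixed shape and orientation, is therefore the same. Hence the only freedom left is $\gamma_{\ell_0}$, which proves the claim.
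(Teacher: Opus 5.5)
\textbf{Gap at $\ell_0=1$.} You correctly identify the one delicate coordinate: $\gamma_{\ell_0+1}$, the angle at $B_{j-1}$ between $(B_{j-2},C_j,B_{j-1})$ and the next non-degenerate triangle. The paper's proof passes over this point. It deduces the conclusion only from the fixedness of $C_{j+1},\dots,C_n$ and $B_{j-1},\dots,B_{n-3}$, which controls $\gamma_{\ell_0+2},\dots,\gamma_{k-1}$ but not $\gamma_{\ell_0+1}$, since $C_j$ moves.

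Your repair breaks down when $\ell_0=1$. Then $\Theta=\emptyset$, $i=2$ and $B_{i-2}=C_1$, so no prescribed angle pins the first triangle $(C_1,C_2,B_1)$. With $C_1$ and $B_{j-1}$ fixed, this triangle is only known up to rotation about $C_1$. Hence $B_1=B_{j-2}$ may move to its mirror image across the geodesic $C_1B_{j-1}$. So ``the first part of the chain up to $B_{i-1}$ is identical'' is unjustified in this case.

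\textbf{The statement itself fails there.} The gap cannot be patched, because for $\ell_0=1$ the claim is false as soon as a non-degenerate triangle follows $(B_{j-2},C_j,B_{j-1})$. Take $T$ regular with $n\ge 5$, so $i=2$ and $j=3$. Then $\tau^m[\rho]\in T$ amounts to $d(C_3',B_2)=d(C_3,B_2)$, where $C_3'$ is the rotation of $C_3$ about $Y_1$ by $m\upsilon_1$. Besides the trivial solution, this allows $C_3'$ to be the reflection of $C_3$ across the geodesic $Y_1B_2$.

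That is exactly the ``$-$'' alternative leading to \eqref{eq:eta-equals-mupsilon/2}, which the paper analyses right after the claim. The paper only shows it happens for finitely many values of $\gamma_{\ell_0}$, which is why the sets $\mathcal{Z}_m$ are needed. For large $m$, an intermediate value argument in $\gamma_{\ell_0}$ shows it does happen: $m\upsilon_1/2$ sweeps many multiples of $\pi$ while $\eta$ stays bounded. At such points the triangle $(B_1,C_3,B_2)$ is rotated nontrivially about the fixed point $B_2$, while $(B_2,C_4,B_3)$ stays put, so $\gamma_2=\gamma_{\ell_0+1}$ changes.

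What is true is the following:
\begin{itemize}
\item $Y_1$, $B_{j-1}$ and all triangles after $(B_{j-2},C_j,B_{j-1})$ are fixed, so $\gamma_{\ell_0+2},\dots,\gamma_{k-1}$ agree;
\item $(B'_{j-2},C'_j,B_{j-1})$ is isometric to $(B_{j-2},C_j,B_{j-1})$.
\end{itemize}
This is all that the rest of the proof of Lemma~\ref{lem:induction-step-fibers-zero-measure} actually uses.

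\textbf{The case $\ell_0\ge 2$.} Here your argument is sound but proves more than you state. In the common normalization the $(\ell_0-1)$-th triangle is literally fixed, so the $\ell_0$-th triangle is fixed: it is rotated about $B_{i-2}$ by the change in $\gamma_{\ell_0-1}$, which is zero. Then $B_{j-2}$ and $B_{j-1}$ are fixed, hence so is the $(\ell_0+1)$-th triangle. Thus $\gamma_{\ell_0}$ also agrees and $\tau^m[\rho]=[\rho]$. The fact that your argument leaves no freedom at all is a sign that it cannot cover the situation the later Chebyshev analysis is designed for.

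A minor inaccuracy: the curve $c_ic_j$ encloses only the punctures $i$ and $j$, not $i,\dots,j$.
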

\begin{proofclaim}
As usual, we will denote the vertices of the $\mathcal{B}$-triangle chain of $[\rho]$ by $C_1,\ldots, C_n$ and $B_1,\ldots, B_{n-3}$. In order to better understand the $\mathcal{B}$-triangle chain of $\tau^m[\rho]$, we consider a new geometric presentation of $\pi_1\surface$ given by the fundamental group elements 
\[
(c_i, c_j, c_{j+1},\ldots,c_n,c_1,\ldots,c_{i-1}, \zeta^{-1} c_{i+1}\zeta,\ldots, \zeta^{-1} c_{j-1}\zeta),
\]
where $\zeta=c_jc_{j+1}\cdots c_nc_1\cdots c_{i-1}$. This presentation is compatible with a pants decomposition $\Upsilon$ of $\surface$. The first pants curve in $\Upsilon$ is represented by the fundamental group element $c_ic_j$ which is precisely the curve along which $\tau$ is the Dehn twist. Since $\rho$ is a DT representation, it is totally elliptic by Definition~\ref{def:DT-representations} and we denote by $\upsilon_1\in (0,2\pi)$ the angle of rotation of $\rho(c_ic_j)$. The first triangle in the $\Upsilon$-triangle chain of $\rho$ has vertices $(C_i,C_j,Y_1)$ (it is the mauve triangle on Figure~\ref{fig:triangle-chain-gamma=pi}). If it is non-degenerate, then it has interior angles $(\pi-\alpha_i/2, \pi-\alpha_j/2,\pi-\upsilon_1/2)$ (Fact~\ref{fact:interior-angles-triangle-chains}). The other exterior vertices of the $\Upsilon$-triangle chain of $\rho$ are $C_{j+1},\ldots,C_n,C_1,\ldots,C_{i-1}$ and $\rho(\zeta)^{-1}C_{i+1},\ldots, \rho(\zeta)^{-1}C_{j-1}$. 

The action of $\tau^{m}$ on $[\rho]$ can easily be described in terms of the $\Upsilon$-triangle chain of $\rho$: the exterior vertices $C_i$ and $C_j$ are rotated counterclockwise around $Y_1$ by an angle $m\upsilon_1$ and the exterior vertices $C_{j+1},\ldots,C_n,C_1,\ldots,C_{i-1}$ are not moved (Fact~\ref{fact:action-of-Dehn-twist-angle-coordinates}). Note, that the exterior vertices $\rho(\zeta)^{-1}C_{i+1},\ldots, \rho(\zeta)^{-1}C_{j-1}$ may be moved by $\tau^{m}$ because the word $\zeta$ contains the generator $c_j$, but this will be irrelevant for our argument.\footnote{To be more precise, here is an explicit automorphism of $\pi_1\surface$ that represents $\tau$: 
\[
c_k\mapsto \begin{cases}
    c_k & \text{if $k=1,\ldots,i-1,j+1,\ldots,n$},\\
    (c_j^{-1}c_i^{-1})c_k(c_ic_j) & \text{if $k=i,j$},\\
    (c_j^{-1}c_i^{-1}c_jc_i)c_k(c_i^{-1}c_j^{-1}c_ic_j) & \text{if $k=i+1,\ldots,j-1$}.
\end{cases}
\]} Since the exterior vertices $C_{j+1},\ldots,C_n$ are also exterior vertices in the $\mathcal{B}$-triangle chain of $\rho$ and they are unaffected by $\tau^m$, the shared vertices $B_{j-1},\ldots,B_{n-3}$ are also fixed. We conclude that the angle coordinates $\gamma_{\ell_0+1},\ldots, \gamma_{k-1}$ of $[\rho]$ and $\tau^{m}[\rho]$ coincide. 
\end{proofclaim}

Now that we have a better understanding of the $\mathcal{B}$-triangle chain of $\tau^m[\rho]$, we can prove that the angle coordinate $\gamma_{\ell_0}\big([\rho]\big)$ can only take countably many values in order for $\tau^m[\rho]$ to also belong to $T_\Theta$ .

\begin{claim}\label{claim:exclude-0-pi}
If $\gamma_{\ell_0}\big([\rho]\big)\neq 0$, then $C_i\neq C_j$ and the triangle $(C_i,C_j,Y_1)$ in the $\Upsilon$-triangle chain of $[\rho]$ is non-degenerate. If furthermore $\gamma_{\ell_0}\big([\rho]\big)\neq \pi$, then $Y_1\neq B_{j-1}$ and $\tau[\rho]\neq [\rho]$.
\end{claim}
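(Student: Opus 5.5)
The argument is a case analysis on the geometry of the two consecutive non-degenerate triangles $(B_{i-2},C_i,B_{i-1})$ and $(B_{j-2},C_j,B_{j-1})$ of the $\mathcal{B}$-triangle chain of $[\rho]$. Write $P=B_{i-1}=\cdots=B_{j-2}$ for their common vertex. Then $\gamma_{\ell_0}$ is the oriented angle at $P$ between the rays $PC_i$ and $PC_j$, up to the fixed angular offsets given by the interior angles of the two triangles (Fact~\ref{fact:interior-angles-triangle-chains}). Since both triangles are non-degenerate, $C_i\neq P$ and $C_j\neq P$.

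\textbf{First assertion.} Assume $\gamma_{\ell_0}([\rho])\neq 0$. We first show $C_i\neq C_j$. If $C_i=C_j$, the two triangles share the edge $PC_i$. Their interior angles at $C_i$ and $P$ are determined by $\alpha$ and $\beta$ through Fact~\ref{fact:interior-angles-triangle-chains}. Hence the relative position of the two triangles is rigid, and it corresponds to one specific value of $\gamma_{\ell_0}$. We will identify this value as $0$ by checking the convention in Figure~\ref{fig:triangle-chain}: when $\gamma=0$ the chain folds so that $C_{j}$ lies on the ray $PC_i$, and equality of the distances $d(P,C_i)=d(P,C_j)$ then has to be checked as well. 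This is where the precise normalization of $\gamma$ is needed. Alternatively, if $C_i=C_j$, then by Fact~\ref{fact:fixed-points-Dehn-twists} applied to $c_ic_j$, the point $[\rho]$ is fixed by $\tau$. On the other hand $\tau$ rotates $C_i,C_j$ about $Y_1$ by $\upsilon_1$. We would then need to tie this to $\gamma_{\ell_0}=0$ via the angle convention.

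Once $C_i\neq C_j$, non-degeneracy of $(C_i,C_j,Y_1)$ follows from the remark after the definition of regular chains: a triangle of a chain is degenerate only when all three vertices coincide. Since $C_i\neq C_j$, the triangle cannot be degenerate.

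\textbf{Second assertion.} Now assume additionally $\gamma_{\ell_0}\neq\pi$. Suppose for contradiction that $Y_1=B_{j-1}$. Then $Y_1$ is the fixed point of both $\rho(c_ic_j)$ and $\rho(b_{j-2}')$, where $B_{j-1}$ is the fixed point of $\rho((c_1\cdots c_j)^{-1})$. We compare the triangle $(C_i,C_j,Y_1)$, whose interior angles are known, with the configuration $P,C_i,C_j,B_{j-1}$. Using hyperbolic trigonometry, namely the angles at $C_i$ and $C_j$ prescribed by $\alpha$, we expect to show that $Y_1$ coincides with $B_{j-1}$ only in the collinear or symmetric configurations. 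These occur exactly when the rays $PC_i$ and $PC_j$ are opposite, i.e.\ $\gamma_{\ell_0}=\pi$. The computation would go through the law of cosines in the triangles $(P,C_i,C_j)$ and $(C_i,C_j,Y_1)$.

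\textbf{Non-triviality of $\tau$.} Finally, $\tau[\rho]\neq[\rho]$ follows from Fact~\ref{fact:fixed-points-Dehn-twists}. A fixed point of $\tau$ requires either $C_i=\cdots=C_j$, which is excluded by the first assertion, or $C_{j+1}=\cdots=C_n=C_1=\cdots=C_{i-1}$. The latter forces the remaining part of the chain to collapse to a single point, which must then be $B_{j-1}$ and also $Y_1$, contradicting $Y_1\neq B_{j-1}$. I expect the main obstacle to be the trigonometric step showing $Y_1=B_{j-1}$ forces $\gamma_{\ell_0}=\pi$; this is likely the part deferred to Appendix~\ref{apx:trig-computations}.
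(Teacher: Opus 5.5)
Your proposal has a genuine gap at the central step, the implication $Y_1=B_{j-1}\Rightarrow \gamma_{\ell_0}([\rho])\in\{0,\pi\}$. You only announce that a law-of-cosines computation ``should'' show it, and you attribute it to Appendix~\ref{apx:trig-computations}. That appendix actually treats a different issue, the degree $2m+4$ polynomial.

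No trigonometry is needed. The missing idea is to compare the angles at $C_j$:
\begin{itemize}
\item By Fact~\ref{fact:interior-angles-triangle-chains}, applied once to the $\Upsilon$-chain and once to the $\mathcal{B}$-chain, both triangles $(C_i,C_j,Y_1)$ and $(B_{j-2},C_j,B_{j-1})$ have interior angle $\pi-\alpha_j/2$ at $C_j$, and they carry the same orientation.
\item If $Y_1=B_{j-1}$, the two triangles share the side $C_jB_{j-1}$. Hence the sides $C_jC_i$ and $C_jB_{j-2}$ lie on the same ray from $C_j$, so $C_i$ lies on the geodesic ray from $C_j$ through $P=B_{i-1}=B_{j-2}$.
\item Since $C_i\neq P$, the rays $PC_i$ and $PC_j$ either coincide or are opposite, i.e.\ $\gamma_{\ell_0}([\rho])\in\{0,\pi\}$.
\end{itemize}
Your ``symmetric configurations'' would be the reflection of $C_i$ across the line $C_jB_{j-1}$. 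Those would \emph{not} give $\gamma_{\ell_0}=\pi$. They are excluded precisely by the coherent orientation of the triangles, which your sketch never uses.

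\textbf{First assertion.} This step is closer to the mark but needs tightening. There are no angular offsets: with the convention $\gamma_i=\angle(C_{i+2},B_i,C_{i+1})$, $\gamma_{\ell_0}$ is exactly the oriented angle at $P$ from $C_j$ to $C_i$. So if $C_i=C_j$ (a point different from $P$), the two rays from $P$ coincide and $\gamma_{\ell_0}=0$. Only this direction is needed. No comparison of $d(P,C_i)$ with $d(P,C_j)$ is required, and the detour through Fact~\ref{fact:fixed-points-Dehn-twists} is unnecessary. Non-degeneracy of $(C_i,C_j,Y_1)$ then follows as you say.

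\textbf{The step $\tau[\rho]\neq[\rho]$.} Your argument is the paper's, with one unjustified claim. From $C_{j+1}=\cdots=C_n=C_1=\cdots=C_{i-1}$ in the $\mathcal{B}$-presentation you only get that the tail of the $\mathcal{B}$-chain collapses onto $B_{j-1}$. You do not get that this point is $Y_1$. To obtain $Y_1$, apply Fact~\ref{fact:fixed-points-Dehn-twists} in the $\Upsilon$-presentation, where $c_i,c_j$ are the first two generators:
\begin{itemize}
\item Since $C_i\neq C_j$, all other $\Upsilon$-exterior vertices coincide, so the $\Upsilon$-chain collapses onto $Y_1$.
\item The vertices $C_{j+1},\ldots,C_n$ are among them, and collapsing the $\mathcal{B}$-chain from its end gives $Y_1=C_{j+1}=\cdots=C_n=B_{j-1}$.
\end{itemize}
This contradicts the second assertion.
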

\begin{proofclaim}
The first assertion is immediate. Now, assume that $Y_1=B_{j-1}$. Since the triangles $(C_i,C_j,Y_1)$ and $(B_{j-2},C_j,B_{j-1})$ both have an angle $\pi-\alpha_j/2$ at $C_j$, it follows that $C_i$ lies on the geodesic ray from $C_j$ through $B_{i-1}=B_{j-2}$. We are assuming that $C_i\neq B_{i-1}$ and $\gamma_{\ell_0}\big([\rho]\big)\neq 0$, so we must have $\gamma_{\ell_0}\big([\rho]\big)=\pi$, see Figure~\ref{fig:triangle-chain-gamma=pi}. 

\begin{figure}[h]
\centering
\begin{tikzpicture}[font=\sffamily, scale=1.1]
\node[anchor=south west,inner sep=0] at (0,0) {\includegraphics[width=10.8cm]{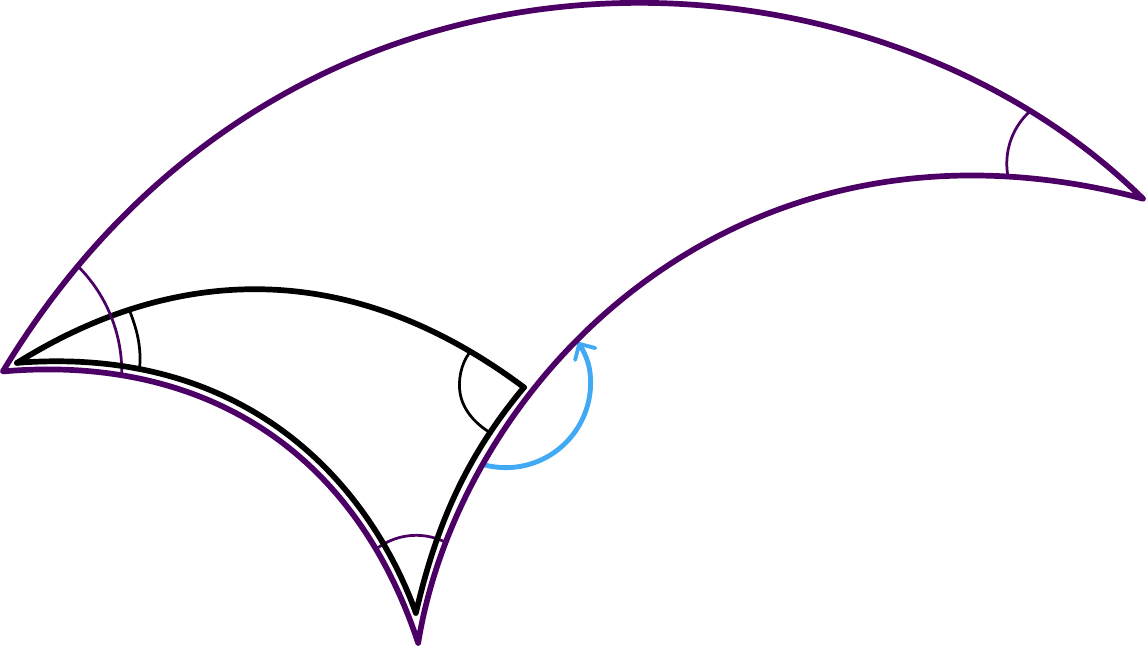}};

\begin{scope}
\fill (9.76,3.85) circle (0.09) node[right]{$C_i$};
\fill (3.59,0.04) circle (0.09) node[below]{$C_j$};
\draw (1.9, 2.63) node{\small $\pi-\frac{\beta_{j-1}}{2}$};
\draw (3.55, 2.2) node{\small $\frac{\beta_{j-2}}{2}$};
\end{scope}

\begin{scope}[plum]
\draw (4.53, .85) node{\small $\pi-\alpha_{j}/2$};
\draw (-.05, 3.35) node{\small $\pi-\upsilon_{1}/2$}; 
\draw (7.92, 4.38) node{\small $\pi-\alpha_{i}/2$};
\end{scope}

\begin{scope}[apricot]
\fill (0.09,2.37) circle (0.09) node[left]{$Y_1=B_{j-1}$};
\fill (4.52,2.2) circle (0.09);
\draw[anchor=south] (4.23, 2.8) node{\small $B_{i-1}=B_{j-2}$};
\draw[->] (4.23, 2.95) to[out=280, in=90] (4.52,2.35);
\end{scope}

\begin{scope}[sky]
\draw (5,2) node[right]{$\gamma_{\ell_0}\big([\rho]\big)=\pi$};
\end{scope}
\end{tikzpicture}
\caption{The configuration in which $Y_1 = B_{j-1}$, forcing $C_i$, $B_{i-1}=B_{j-2}$, and $C_j$ to be collinear. The triangles are drawn slightly offset to highlight their superposition.}
\label{fig:triangle-chain-gamma=pi}
\end{figure}

Furthermore, if $\gamma_{\ell_0}\big([\rho]\big)\neq 0$ and $\tau[\rho]=[\rho]$, then all the exterior vertices of the $\Upsilon$-triangle chain of $[\rho]$ except $C_i$ and $C_j$ must coincide with $Y_1$ (Fact~\ref{fact:fixed-points-Dehn-twists}). In particular, this would mean $Y_1=C_{j+1}=\cdots=C_n$ which implies $Y_1=B_{j-1}$ and thus $\gamma_{\ell_0}\big([\rho]\big)= \pi$ by the above.
\end{proofclaim}

\begin{claim}\label{claim:exclude-rationals}
There is a countable subset $\mathcal{Q}\subset \R/2\pi\Z$ containing $0$ and $\pi$ such that if $\gamma_{\ell_0}\big([\rho]\big)\notin \mathcal{Q}$, then $\upsilon_1\notin \pi\Q$ and $\tau^m[\rho]\neq [\rho]$. Moreover, the set $\mathcal{Q}$ does not depend on $m$ and $[\rho]$---it satisfies the conclusion of the claim for any other choice of $m\geq 1$ and $[\rho]\in T_\Theta\cap \tau^{-m}(T_\Theta)$.
\end{claim}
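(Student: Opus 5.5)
The plan is to express $\upsilon_1$ as an explicit real-analytic, non-constant function of $\gamma_{\ell_0}$ on $T_\Theta$. All other data entering the computation are frozen on $T$: the angles $\alpha$ and $\beta$ are fixed, and the shape of each non-degenerate triangle in the $\mathcal{B}$-triangle chain is determined by its interior angles (Fact~\ref{fact:interior-angles-triangle-chains}). The triangles $(B_{i-2},C_i,B_{i-1})$ and $(B_{j-2},C_j,B_{j-1})$ share the vertex $B_{i-1}=B_{j-2}$, and $\gamma_{\ell_0}$ is the angle between them there. So the hyperbolic distances $d(B_{i-1},C_i)$ and $d(B_{i-1},C_j)$ are constants $a,b>0$ depending only on $T$. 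The hyperbolic law of cosines then gives
\[
\cosh d(C_i,C_j)=\cosh a\cosh b-\sinh a\sinh b\cos\big(\gamma_{\ell_0}+\delta\big)
\]
for some constant offset $\delta$, namely the sum of the interior angles at $B_{i-1}$ that separate the rays towards $C_i$ and $C_j$.

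Next I use the $\Upsilon$-triangle $(C_i,C_j,Y_1)$. By Claim~\ref{claim:exclude-0-pi} it is non-degenerate when $\gamma_{\ell_0}\neq0$, and it has interior angles $(\pi-\alpha_i/2,\pi-\alpha_j/2,\pi-\upsilon_1/2)$. The dual (angle) law of cosines relates the side $C_iC_j$ to the three angles:
\[
\cos(\upsilon_1/2)=\cos(\alpha_i/2)\cos(\alpha_j/2)+\sin(\alpha_i/2)\sin(\alpha_j/2)\cosh d(C_i,C_j),
\]
up to sign conventions coming from $\pi-x$. Combining the two displays, $\cos(\upsilon_1/2)$ becomes an affine function of $\cos(\gamma_{\ell_0}+\delta)$ with non-zero slope, because $\sin(\alpha_i/2)$, $\sinh a$ and $\sinh b$ are all non-zero. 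Hence the map $\gamma_{\ell_0}\mapsto\upsilon_1$ is at most two-to-one. The preimage of the countable set $\pi\Q\cap(0,2\pi)$ is therefore countable. I define $\mathcal{Q}$ to be this preimage together with $\{0,\pi\}$. It depends only on $T$ and $\Theta$ (through $a,b,\delta,\alpha$), and not on $m$ or $[\rho]$.

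For the second conclusion, suppose $\gamma_{\ell_0}\notin\mathcal{Q}$, so that $\upsilon_1\notin\pi\Q$. In the $\Upsilon$-chain, $\tau^m$ rotates $C_i,C_j$ about $Y_1$ by $m\upsilon_1\notin2\pi\Z$ and fixes $C_{j+1},\dots,C_n,C_1,\dots,C_{i-1}$. The rotation moves $C_i$ because $C_i\neq Y_1$ by non-degeneracy. Since $\gamma_{\ell_0}\neq\pi$, Claim~\ref{claim:exclude-0-pi} gives $Y_1\neq B_{j-1}$, so not all of the fixed exterior vertices coincide with $Y_1$. Consequently the image chain is not related to the original by a global isometry, and $\tau^m[\rho]\neq[\rho]$. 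Equivalently, one can apply Fact~\ref{fact:fixed-points-Dehn-twists} to the power $\tau^m$, which is the same twist with angle $m\upsilon_1$.

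The main obstacle is the bookkeeping of orientations and offsets: identifying $\delta$ correctly, and checking that the sign conventions in Fact~\ref{fact:interior-angles-triangle-chains} (the regime $\sum\alpha_i>2\pi(n-1)$; the other regime should follow by orientation reversal) do not make the slope vanish. The degenerate-intermediate case $j>i+1$, where several vertices coincide with $B_{i-1}$, also has to be checked. I expect it to go through unchanged, since only the two triangles adjacent at $B_{i-1}=B_{j-2}$ enter the computation.
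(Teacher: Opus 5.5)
Your argument is correct and follows essentially the same route as the paper: the affine relation $\cos(\upsilon_1/2)=k_1\cos\gamma_{\ell_0}+k_2$ with $k_1\neq 0$ (the paper imports it from earlier trigonometry, while you derive it from the law of cosines in $(C_i,B_{i-1},C_j)$ and the dual law in $(C_i,C_j,Y_1)$), the resulting countable set $\mathcal{Q}\ni 0,\pi$, and the non-triviality of the rotation by $m\upsilon_1\notin 2\pi\Z$ about $Y_1$, obtained through Claim~\ref{claim:exclude-0-pi}. Two harmless slips: your offset $\delta$ is in fact $0$, since $\gamma_{\ell_0}$ is by definition the angle at $B_{i-1}$ between the rays to $C_i$ and $C_j$, and the dual law carries a minus sign, $\cos(\upsilon_1/2)=\cos(\alpha_i/2)\cos(\alpha_j/2)-\sin(\alpha_i/2)\sin(\alpha_j/2)\cosh d(C_i,C_j)$; neither affects the non-vanishing of the slope.
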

\begin{proofclaim}
If $\gamma_{\ell_0}\big([\rho]\big)\neq 0$, the triangle $(C_i,C_j,Y_1)$ is non-degenerate by Claim~\ref{claim:exclude-0-pi}, see Figure~\ref{fig:triangle-chain-B-and-Y}. Using hyperbolic trigonometry as in~\cite[Equation~(3.9)]{orbit-closures}, we can express $\cos(\upsilon_1/2)$ in terms of $\gamma_{\ell_0}\big([\rho]\big)$ as
\begin{equation}\label{eq:relation-between-upsilon-and-gamma}
\cos(\upsilon_1/2)=\cos\big(\gamma_{\ell_0}([\rho])\big)\cdot k_1 + k_2,
\end{equation}
where $k_1$ and $k_2$ depend only on the values of $\alpha_1,\ldots,\alpha_n$ and $\beta_1,\ldots,\beta_{n-3}$, and $k_1\neq 0$. In particular, to every value of $\upsilon_1\in (0,2\pi)$ corresponds at most two values of $\gamma_{\ell_0}\big([\rho]\big)\in \R/2\pi\Z$ in order for~\eqref{eq:relation-between-upsilon-and-gamma} to hold. So, by choosing $\mathcal{Q}$ to be $\{0\}$ union the set of all angles in $\R/2\pi\Z$ whose cosine can be written as $(\cos(q\pi)-k_2)/k_1$ for some $q\in \Q$, and by taking $\gamma_{l_0}([\rho])\notin\mathcal{Q}$, we can guarantee that $\upsilon_1\notin \pi\Q$.

Now, if we further add $\pi$ to $\mathcal{Q}$, then $\gamma_{\ell_0}\big([\rho]\big)\notin \mathcal{Q}$ also implies $\tau[\rho]\neq [\rho]$ by Claim~\ref{claim:exclude-0-pi}. In order to prove that $\tau^m[\rho]\neq [\rho]$, recall that when $\tau^m$ acts on the $\Upsilon$-triangle chain of $[\rho]$, it rotates the triangle $(C_i,C_j,Y_1)$ around $Y_1$ by an angle $m\upsilon_1$ (Fact~\ref{fact:action-of-Dehn-twist-angle-coordinates}). Under the assumption that $\tau[\rho]\neq [\rho]$, this rotation is trivial if and only if $m\upsilon_1\in 2\pi\Z$, which does not happen as long as  $\gamma_{\ell_0}\big([\rho]\big)\notin \mathcal{Q}$.
\end{proofclaim}

\begin{figure}[h]
\centering
\begin{tikzpicture}[font=\sffamily, scale=1.1]
\node[anchor=south west,inner sep=0] at (0,0) {\includegraphics[width=10.8cm]{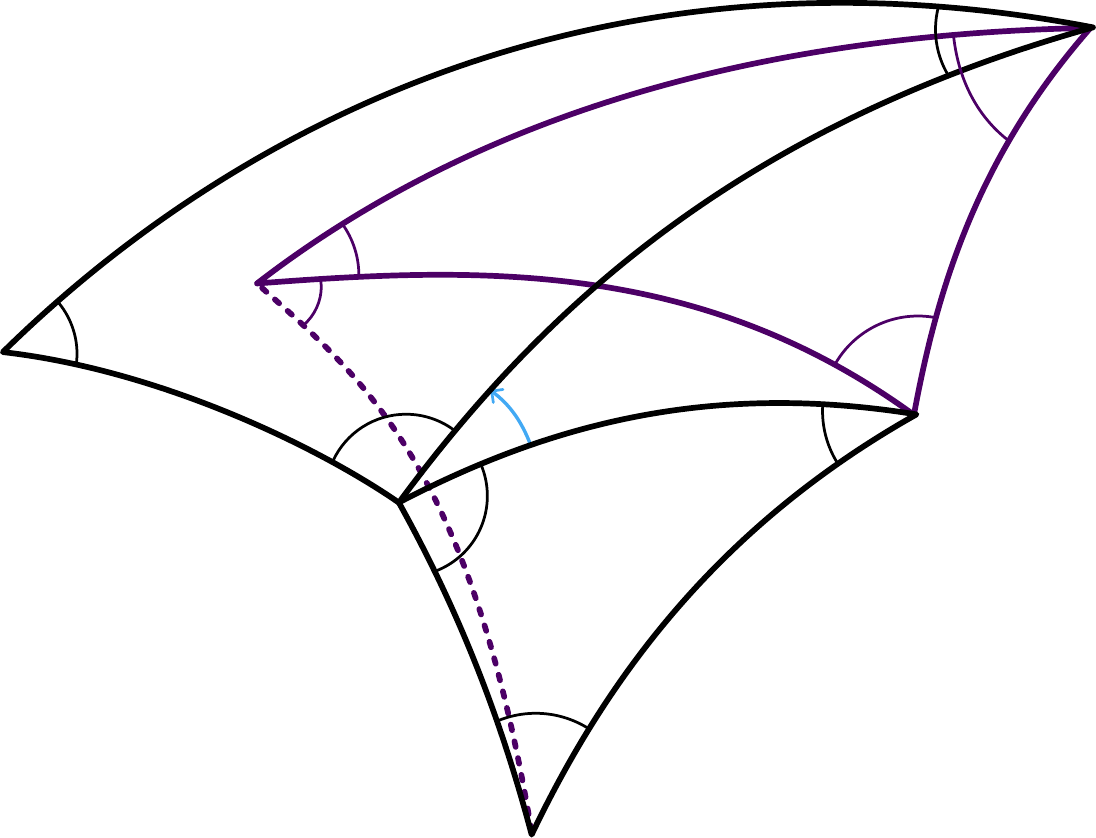}};
\begin{scope}
\fill (9.8,7.25) circle (0.07) node[right]{$C_i$};
\fill (8.2,3.78) circle (0.08) node[right]{$C_j$};
\draw (8.5, 7.7) node{\small $\pi-\alpha_i/2$};
\draw (6.7, 3.5) node{\small $\pi-\alpha_j/2$};
\draw (6.1, 0.85) node{\small $\pi-\beta_{j-1}/2$};
\draw (4.85, 2.8) node{\small $\beta_{j-2}/2$};
\draw (2.2, 3.2) node{\small $\pi-\beta_{i-1}/2$};
\draw (1.25, 4.55) node{\small $\beta_{i-2}/2$};
\end{scope}

\begin{scope}[apricot]
\fill (0.06,4.36) circle (0.07) node[left]{$B_{i-2}$};
\fill (3.6,3) circle (0.07) node[below left]{$B_{i-1}=\cdots=B_{j-2}$};
\fill (4.77,0.04) circle (0.07) node[below]{$B_{j-1}$};
\end{scope}

\begin{scope}[sky]
\draw (5.2,4) node{$\gamma_{\ell_0}\big([\rho]\big)$};
\end{scope}

\begin{scope}[plum]
\fill (2.32,4.97) circle (0.07) node[left]{$Y_1$};   
\draw (2.8, 4.75) node[right]{$\eta$};
\draw (9.75, 6.25) node{\small $\pi-\alpha_i/2$};
\draw (7.65, 4.85) node{\small $\pi-\alpha_j/2$};
\draw (3.85, 5.35) node{\small $\pi-\upsilon_1/2$};
\end{scope}
\end{tikzpicture}
\caption{The configuration in which the triangle $(C_i,C_j,Y_1)$ is non-degenerate and $Y_1\neq B_{j-1}$.}
\label{fig:triangle-chain-B-and-Y}
\end{figure}

From now on, let us assume that  $\gamma_{\ell_0}\big([\rho]\big)\notin \mathcal{Q}$, where $\mathcal{Q}$ is provided by Claim~\ref{claim:exclude-rationals}. Claim~\ref{claim:exclude-0-pi} implies that $Y_1\neq C_j$ and $Y_1\neq B_{j-1}$, so the oriented angle $\angle (C_j,Y_1,B_{j-1})$ is well-defined, as on Figure~\ref{fig:triangle-chain-B-and-Y}. After applying $\tau^m$ to $[\rho]$, the angle $\angle (C_j,Y_1,B_{j-1})$ becomes $\angle (C_j',Y_1,B_{j-1})$, because both $Y_1$ and $B_{j-1}$ are not moved by $\tau^m$ (Claim~\ref{claim:comparaison-B-triangle-chains}). The precise relation (modulo $2\pi$) between these two angles is given by Fact~\ref{fact:action-of-Dehn-twist-angle-coordinates} and reads
\begin{equation}\label{eq:rotated-angles-fibres-zero-measure}
\angle (C_j',Y_1,B_{j-1})=\angle (C_j,Y_1,B_{j-1})+m\upsilon_1.
\end{equation}
Now, observe that the hyperbolic distances $d(C_j,B_{j-1})$ and $d(C_j',B_{j-1})$ are equal because the triangles $(B_{j-2},C_j,B_{j-1})$ and $(B_{j-2}',C_j',B_{j-1})$ are isometric by Claim~\ref{claim:comparaison-B-triangle-chains} and the discussion beforehand. Hence, $d(C_j,B_{j-1})=d(C_j',B_{j-1})$. Furthermore, $d(C_j,Y_1)=d(C_j',Y_1)$ because $\tau^m$ rotates the triangle $(C_i,C_j,Y_1)$ around $Y_1$. This means that the triangles $(C_j,Y_1,B_{j-1})$ and $(C_j',Y_1,B_{j-1})$ are isometric and hence
\[
\angle (C_j',Y_1,B_{j-1})=\pm\angle (C_j,Y_1,B_{j-1}).
\]
The $\pm$ comes from the fact that the angles are oriented. In the $+$ case, we would get $m\upsilon_1\in 2\pi\Z$ from~\eqref{eq:rotated-angles-fibres-zero-measure} which is impossible by Claim~\ref{claim:exclude-rationals} as we are assuming $\gamma_{\ell_0}\big([\rho]\big)\notin \mathcal{Q}$. So, we conclude from~\eqref{eq:rotated-angles-fibres-zero-measure} that
\begin{equation}\label{eq:eta-equals-mupsilon/2}
\angle (C_j,Y_1,B_{j-1})\in\left\{-\frac{m\upsilon_1}{2}, -\frac{m\upsilon_1}{2}+\pi\right\}.
\end{equation}
For simplicity, we will write $\eta=\angle (C_j,Y_1,B_{j-1})$ (see Figure~\ref{fig:triangle-chain-B-and-Y}). Equation~\eqref{eq:eta-equals-mupsilon/2} overdetermines the polygon with vertices $C_i, C_j, B_{j-1}, Y_1$ and should only be true for finitely many value of $\upsilon_1$. This is what we will prove now.

Equation~\eqref{eq:eta-equals-mupsilon/2} implies that $\cos(\eta)=\pm\cos(m\upsilon_1/2)$, which can further be expressed as a degree $m$ polynomial in $\cos(\upsilon_1/2)$ using Chebyshev polynomials of the first kind. On the other hand, we can use hyperbolic trigonometry in the polygon $C_i, C_j, B_{j-1}, Y_1$ to compute $\cos(\eta)$ in terms of $\cos(\upsilon_1/2)$. By comparing the two relations, we observe that $\cos(\upsilon_1/2)$ is the zero of a degree $2m+4$ polynomial whose coefficients only depend on the angles $\alpha_1,\ldots,\alpha_n$ and $\beta_1,\ldots,\beta_{n-3}$ (hence not on $[\rho]$). The exact formulas and detailed computations can be found in Appendix~\ref{apx:trig-computations}. We conclude that $\cos(\upsilon_1/2)$ can only take finitely many different values in order for~\eqref{eq:eta-equals-mupsilon/2} to hold. To each such value correspond at most two values of $\gamma_{\ell_0}\big([\rho]\big)$ by~\eqref{eq:relation-between-upsilon-and-gamma}. All these values of $\gamma_{\ell_0}\big([\rho]\big)$ form a finite set $\mathcal{Z}_m\subset \R/2\pi\Z$.

In conclusion, by defining $\mathcal{S}=\mathcal{Q}\cup\bigcup_{m\geq 1}\mathcal{Z}_m$, we obtain a countable set such that for every $m\geq 1$ and every $[\rho]\in T_\Theta$, if $\gamma_{\ell_0}\big([\rho]\big)\notin \mathcal{S}$, then $\tau^m[\rho]$ does not belong to $T_\Theta$. This finishes the proof of Lemma~\ref{lem:induction-step-fibers-zero-measure}.
\end{proof}
\section{Transverse Lagrangian tori}\label{sec:transversality}
\subsection{Overview}
The goal of this section is to show that each point in a DT component lies on two transverse Lagrangian torus fibrations arising from chained pants decompositions. More precisely, we prove:
\begin{prop}\label{prop:transversality}
For every $[\rho] \in \RepDT$, there exist chained pants decompositions $\mathcal{B}$ and $\mathcal{D}$ of $\surface$, with associated moment maps $\beta\colon \RepDT\to\Delta_{\mathcal{B}}$ and $\delta\colon \RepDT\to\Delta_{\mathcal{D}}$, such that
\[
T^\ast_{[\rho]}\RepDT = \left\langle (d\beta_i)_{[\rho]}, (d\delta_i)_{[\rho]} : i=1,\ldots, n-3 \right\rangle.
\]
\end{prop}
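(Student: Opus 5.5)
The plan is to reduce the spanning statement to the invertibility of a matrix of Poisson brackets. Suppose first that $\mathcal{B}$ and $\mathcal{D}$ are chained pants decompositions for which $[\rho]$ lies in a top dimensional fiber of both $\beta$ and $\delta$. By Fact~\ref{fact:regular-triangle-chains}, both $\mathcal{B}$-triangle chains and $\mathcal{D}$-triangle chains of $\rho$ are then regular. The fibers are Lagrangian, so the $(d\beta_i)_{[\rho]}$ are linearly independent, and $\ker d\beta_{[\rho]}$ is the $(n-3)$-dimensional span of the Hamiltonian vector fields $X_{\beta_1},\ldots,X_{\beta_{n-3}}$; the same holds for $\delta$. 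The covectors $(d\beta_i)_{[\rho]},(d\delta_j)_{[\rho]}$ span $T^\ast_{[\rho]}\RepDT$ if and only if $\ker d\beta\cap\ker d\delta=0$. This in turn holds if and only if no nonzero combination $\sum_i a_iX_{\beta_i}$ is annihilated by every $d\delta_j$, that is, if and only if the $(n-3)\times(n-3)$ matrix $M=\big(\{\beta_i,\delta_j\}([\rho])\big)_{i,j}$ is invertible. So the proof splits into (a) finding a chained $\mathcal{B}$ that is regular at $[\rho]$, and (b) producing a chained $\mathcal{D}$, also regular at $[\rho]$, with $\det M\neq 0$.

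For (a), I would vary the compatible geometric presentation. A $\mathcal{B}$-triangle chain is non-regular exactly when some triangle collapses, i.e.\ when $C_{i+1}$ coincides with the two adjacent shared vertices. Because $\rho$ is Zariski dense, the fixed points $C_1,\ldots,C_n$ are not all equal. Using the $\PMod(\surface)$-equivariance of the construction, I would choose the ordering of the punctures and the sub-words defining the $b_i$ so that no triangle is degenerate. Fact~\ref{fact:fixed-points-Dehn-twists} and the angle formulas of Fact~\ref{fact:interior-angles-triangle-chains} show that coincidences of vertices are rigid conditions. Moreover, the vertices of chains for different presentations are images of the $C_k$ under elements of $\rho(\pi_1\surface)$, so a collapse for every choice would force a global coincidence contradicting Zariski density.

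For (b), I would take $\mathcal{D}$ to be $\mathcal{B}$ with the presentation cyclically shifted, for instance compatible with $(c_2,c_3,\ldots,c_n,c_1)$, so that $d_j$ is represented by $c_2\cdots c_{j+2}$. By Fact~\ref{fact:action-of-Dehn-twist-angle-coordinates}, the flow of $X_{\beta_i}$ rotates the sub-chain beyond $B_i$ rigidly around $B_i$. Consequently $\{\beta_i,\delta_j\}$ is the derivative of the rotation angle of $\rho(d_j)$ under this rotation. That rotation angle is an angle of the corresponding $\mathcal{D}$-triangle, and it is computable by hyperbolic trigonometry in the spirit of \eqref{eq:relation-between-upsilon-and-gamma}. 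With this choice, $\delta_j$ should only depend on the triangles of the $\mathcal{B}$-chain up to a certain index, which would make $M$ triangular. Each diagonal entry should then be, up to a nonzero factor, the sine of a quantity of the form $\gamma_i$ or of an angle between consecutive triangles. It therefore vanishes only in collinear configurations, the analogues of $\gamma_{\ell_0}\in\{0,\pi\}$ in Claim~\ref{claim:exclude-0-pi}.

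The main obstacle is to cover every point, including those where such a degenerate collinearity occurs, and to ensure that $\mathcal{D}$ is itself regular at $[\rho]$. In that case I would first try replacing $\mathcal{D}$ by its image under a Dehn twist $\tau_i$ along a curve of $\mathcal{B}$. The functions $\beta$ are $\tau_i$-invariant, while $\delta\circ\tau_i^{-1}$ is the moment map of $\tau_i(\mathcal{D})$, and the offending angle is shifted by $\beta_i\notin 2\pi\Z$. The obstruction is then removed unless finitely many rigid trigonometric identities hold simultaneously, and these I would rule out by the explicit interior angles of Fact~\ref{fact:interior-angles-triangle-chains} together with the angle condition~\eqref{eq:angle-condition}. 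Carrying out this case analysis cleanly, especially for the boundary triangles where $B_0=C_1$ or $B_{n-2}=C_n$, is where I expect most of the work to lie.
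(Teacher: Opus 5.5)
Your reduction is correct. Once $\beta$ is regular at $[\rho]$, the covectors span $T^\ast_{[\rho]}\RepDT$ if and only if $M=\big(\{\beta_i,\delta_j\}([\rho])\big)$ is invertible, and invertibility already forces $\delta$ to be regular at $[\rho]$. Your cyclically shifted $\mathcal{D}$ is indeed triangular, since $b_i$ and $d_j=(c_2\cdots c_{j+2})^{-1}$ are disjoint for $j<i$. It is exactly the paper's construction with the option $(d_{i-1}^{-1}c_{i+2})^{-1}$ taken at every step. Step (a) is a genuine theorem that the paper imports from \cite{aaron-arnaud}; your Zariski-density sketch does not prove it, but citing it is fine.

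The gap is in how you handle vanishing diagonal entries. For $n=4$ your $d_1$ is $(c_2c_3)^{-1}$. Since $\cos(\delta_1/2)=k_1\cos\gamma_1+k_2$ with $k_1\neq 0$, and $X_{\beta_1}$ is a constant multiple of $\partial_{\gamma_1}$, one gets $\{\beta_1,\delta_1\}\propto\sin\gamma_1$. Twisting $\mathcal{D}$ along $b_1$ only moves the evaluation point: $\{\beta_1,\delta_1\circ\tau_1^{-m}\}([\rho])=\{\beta_1,\delta_1\}(\tau_1^{-m}[\rho])$, which replaces $\gamma_1$ by $\gamma_1-m\beta_1$. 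Now take $\beta_1([\rho])=\pi$ and $\gamma_1([\rho])=0$. This is a legitimate point of a regular fiber, e.g.\ for $\alpha_p=1.9\pi$ for all $p$, where the interior of $\Delta_{\mathcal{B}}$ is $(0.2\pi,1.8\pi)$ and the fiber is a full circle in $\gamma_1$. The whole orbit then stays in $\{0,\pi\}$, so every twisted $\mathcal{D}$ fails. The ``rigid identities'' you hope to rule out therefore genuinely occur whenever $\beta_i\in\pi\Q$. For any $n$ the first diagonal entry fails in the same way, because $\Lambda_{\mathcal{B}}$ moves $\gamma_1$ only by multiples of $\beta_1$.

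What is missing is a second candidate curve outside the $\Lambda_{\mathcal{B}}$-orbit of the first. On a four-holed sphere, $(\overline{c}_1\overline{c}_3)^{-1}$ is not the image of $(\overline{c}_2\overline{c}_3)^{-1}$ under any power of the twist along $\overline{b}_1$. The same computation in the triangle $(C_1,B_1,C_3)$ shows that its bracket vanishes on the translate of $\{0,\pi\}$ by the interior angle $\pi-\beta_1/2$ at $B_1$. This set is disjoint from $\{0,\pi\}$ because $\beta_1\notin 2\pi\Z$, which is the content of Fact~\ref{fact:transversality-n=4}.

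The paper builds $\mathcal{D}$ one curve at a time on four-holed sub-spheres $\surface^{(i)}$ with boundary $(x,d_{i-1}^{-1},c_{i+2},b_{i+1})$. At each step it chooses $d_i$ between $(d_{i-1}^{-1}c_{i+2})^{-1}$ and $(xc_{i+2})^{-1}$ via Fact~\ref{fact:transversality-n=4}, and transports the bracket to $\surface$ via \eqref{eq:sub-spheres}.

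Your proposal also skips a point this requires. The $i$-th diagonal entry lives on such a sub-sphere; for your $\mathcal{D}$ it is the one with boundary $(c_1,c_2\cdots c_{i+1},c_{i+2},b_{i+1})$. That entry vanishes outright if the fixed point $D_{i-1}$ of $\rho(d_{i-1})$ equals $B_i$, since the flow of $\beta_i$ then rotates $C_{i+2}$ about $D_{i-1}$ and preserves $\delta_i$. So you need that sub-chain to be regular. The paper derives $D_{i-1}\neq B_i$ from $\{\beta_{i-1},\delta_{i-1}\}([\rho])\neq 0$ at the previous step (Claims~\ref{claim:1-Di-neq-Bi+1} and~\ref{claim:2-Di-neq-Bi+1}). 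This is why the curves are chosen inductively, with a binary choice at each step, rather than fixed in advance.
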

After preliminary remarks on transversality in character varieties (Section~\ref{sec:preliminary-remarks}), we outline the strategy for proving Proposition~\ref{prop:transversality} (Section~\ref{sec:setting-up-proof}), before constructing the required pants decompositions (Section~\ref{sec:iterated-construction}).

\subsection{Preliminary remarks}\label{sec:preliminary-remarks}
Transversality of Hamiltonian flows associated with simple closed curves on $\surface$ plays a central role in the study of ergodic dynamics on character varieties (see e.g.~\cite[Theorem~2.1 \& Lemma~3.1]{goldman-xia}). In the setting of DT representations, several criteria for transversality have been established: some of an algebraic nature, as in~\cite[Lemmas~5.5 \&~5.6]{ergodicity}, and others formulated in terms of angle coordinates and triangle chains~\cite[Lemmas~3.3 \&~3.10]{orbit-closures}. Proposition~\ref{prop:transversality} differs from these results in that it does not assert transversality for an arbitrary collection of $2(n-3)$ simple closed curves on $\surface$, but rather for a collection that decomposes into two pants decompositions of~$\surface$.

Also note that two Lagrangian tori fibrations of a DT component given by two different pants decompositions of $\surface$ generically intersect transversally. This can be seen by adapting Detcherry's argument to the case of DT representation~\cite[Proposition~4.1]{detcherry}. In contrast, Proposition~\ref{prop:transversality} requires transversality to hold at given points.

\subsection{Setting up the proof}\label{sec:setting-up-proof}
Let us fix an arbitrary $[\rho]\in\RepDT$. It was proved in~\cite[Proposition~2]{aaron-arnaud} that there exists a chained pants decomposition $\mathcal{B}$ of $\surface$ with associated moment map $\beta\colon \RepDT\to\Delta_{\mathcal{B}}$ such that $(d\beta_i)_{[\rho]}\neq 0$ for all $i=1,\ldots,n-3$. In other words, $[\rho]$ belongs to one of the regular fibers of $\beta$ and its $\mathcal{B}$-triangle chain is regular (Fact~\ref{fact:regular-triangle-chains}). This will be the first pants decomposition.

It is convenient to fix a system of geometric generators $c_1,\ldots,c_n$ of $\pi_1\surface$ compatible with $\mathcal{B}$, i.e.~such the pants curves of $\mathcal{B}$ are represented by the fundamental group elements $b_i=(c_1\cdots c_{i+1})^{-1}$ as on Figure~\ref{fig:standard-pants-decomposition}.

In order to prove Proposition~\ref{prop:transversality}, we have to find a second pants decomposition $\mathcal{D}$ of $\surface$ with induced moment map $\delta\colon\RepDT\to\Delta_{\mathcal{D}}$ such that the cotangent space to $\RepDT$ at $[\rho]$ is generated by $(d\beta_i)_{[\rho]}$ and $(d\delta_i)_{[\rho]}$ for $i=1,\ldots, n-3$. To establish this, it is enough to show that
\begin{itemize}
    \item $\{\beta_i,\delta_i\}([\rho])\neq 0$ for $i=1,\ldots, n-3$, and
    \item $\{\beta_i,\delta_j\}([\rho])=0$ whenever $i>j$,
\end{itemize}
where $\{\cdot,\cdot\}$ denotes the Poisson bracket induced by the Goldman symplectic form on $\RepDT$.

\begin{rmk}[Calculations on sub-spheres]\label{rem:sub-spheres}
    Given $S'\subset S$ a sub-sphere with at least four punctures, the restriction of any DT representation $\rho\colon\pi_1S\to\psl$ to $\pi_1S'\subset \pi_1S$ defines a new DT representation $\rho'$. Let $\RepDT$ and $\RepDTarg{\alpha'}{S'}$ be the DT components containing $[\rho]$ and $[\rho']$. Now, let $a_1$ and $a_2$ be two non-peripheral simple closed curves on $S'$ with associated angle functions $\vartheta_1',\vartheta_2' \colon\RepDTarg{\alpha'}{S'}\to (0,2\pi)$.
    Since $a_1$ and $a_2$ are also simple closed curves on $S$, they also induce angle functions $\vartheta_1,\vartheta_2 \colon\RepDT\to (0,2\pi)$. As explained by Camacho-Cadena--Farre--Wienhard in~\cite[Theorem~B]{subsurfaces}, the Hamiltonian flows of $\vartheta_1$ and $\vartheta_2$ are \emph{sub-surface deformations} along $S'$, meaning that the Hamiltonian flows do not modifiy representations, up to conjugation, on the complement of $S'$ in $S$. In particular, it holds that
    \begin{equation}\label{eq:sub-spheres}
        \{\vartheta_1,\vartheta_2\}([\rho])=\{\vartheta_1',\vartheta_2'\}([\rho']).
    \end{equation}
\end{rmk}

We will construct $\mathcal{D}$ by selecting one curve after another, using our knowledge of the 4-punctured case. When $n=4$, the pants decompositions $\mathcal{B}$ and $\mathcal{D}$ each consists of a single pants curve represented by a fundamental group element $b_1=(c_1c_2)^{-1}$, respectively $d_1$, as depicted on Figure~\ref{fig:pants-decompositions-n=4}. It was proved in~\cite[Lemma~3.3]{orbit-closures} that $d_1$ can be taken to be either $(c_2c_3)^{-1}$ or $(c_1c_3)^{-1}$ (depending on $[\rho]$) to ensure transversality. 
\begin{figure}[h]
\begin{tikzpicture}[scale=.8]  
  \draw[lightplum] (-2,0) arc(180:0:2 and .5);
  \draw[plum] (-2,0) arc(180:0:2 and -.5);

  \draw[lightplum] (1,2.55) to[out=10, in=80] (2.15,1);
  \draw[plum] (1,2.55) to[out=190, in=70] (-2.15,-1);
  \draw[lightplum] (-2.15,-1) to[out=-110, in=190] (-1,-2.55);
  \draw[plum] (-1,-2.55) to[out=10, in=-105] (2.15,1);

  \draw[apricot] (0,2.4) arc(90:270:.75 and 2.4) node[at start, above]{$b_1$};
  \draw[lightapricot] (0,2.4) arc(90:270:-.75 and 2.4);
  
  \draw (-2,3) to[out=-30,in=210] (2,3);
  \draw (3,2) to[out=210,in=150] (3,-2);
  \draw (2,-3) to[out=150,in=30] (-2,-3);
  \draw (-3,-2) to[out=30,in=-30] (-3,2);

  \draw (-3,2) to[out=-30,in=-40] (-2,3);
  \draw (-3,2) to[out=140,in=140] (-2,3);
  \draw (2,3) to[out=30,in=30] (3,2);
  \draw (2,3) to[out=210,in=210] (3,2);
  \draw (3,-2) to[out=-30,in=-30] (2,-3);
  \draw (3,-2) to[out=140,in=140] (2,-3);
  \draw (-2,-3) to[out=30,in=30] (-3,-2);
  \draw (-2,-3) to[out=210,in=210] (-3,-2); 

  \draw (-3.1,3.1) node{$c_2$};
  \draw (3.1,3.1) node{$c_3$};
  \draw (3.1,-3.1) node{$c_4$};
  \draw (-3.1,-3.1) node{$c_1$};
\end{tikzpicture}
\caption{The curve $b_1=(c_1c_2)^{-1}$ on a $4$-punctured sphere in orange and the two candidate curves for $d_1$ in mauve.}
\label{fig:pants-decompositions-n=4}
\end{figure}

\begin{fact}[{\cite[Lemma~3.3]{orbit-closures}}]\label{fact:transversality-n=4}
    Let $\Sigma$ denote a 4-punctured sphere with fundamental group 
    \[
    \pi_1\Sigma = \big\langle \overline{c}_1, \overline{c}_2, \overline{c}_3, \overline{c}_4 : \overline{c}_1\overline{c}_2\overline{c}_3\overline{c}_4=1\big\rangle,
    \]
    and $\overline{\mathcal{B}}$ be the pants decomposition of $\Sigma$ whose only pants curve is represented by $\overline{b}_1=(\overline{c}_1\overline{c}_2)^{-1}$. Let $[\rho]\in\RepDTarg{\overline{\alpha}}{\Sigma}$. If the $\overline{\mathcal{B}}$-triangle chain of $\rho$ is regular, then one of the simple closed curves represented by $(\overline{c}_2\overline{c}_3)^{-1}$ or $(\overline{c}_1\overline{c}_3)^{-1}$ defines a second pants decomposition $\overline{\mathcal{D}}$ of $\Sigma$ such that 
    \[
    \left\{\overline{\beta}_1,\overline{\delta}_1\right\}([\rho])\neq 0,
    \]
    where $\beta_1,\delta_1\colon\RepDTarg{\overline{\alpha}}{\Sigma}\to (0,2\pi)$ are the moment maps induced by $\mathcal{B}$ and $\mathcal{D}$.
\end{fact}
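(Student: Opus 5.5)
The plan is to compute the Poisson bracket $\{\overline{\beta}_1,\overline{\delta}_1\}([\rho])$ as the derivative of $\overline{\delta}_1$ along the Hamiltonian flow of $\overline{\beta}_1$, and to read this flow off the triangle chain. We drop the bars for readability.

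\textbf{Step 1: the Hamiltonian flow of $\beta_1$.} By Fact~\ref{fact:action-of-Dehn-twist-angle-coordinates}, in action-angle coordinates $(\beta_1,\gamma_1)$ this flow is, up to a nonzero constant normalisation, $\partial/\partial\gamma_1$. Geometrically, it rigidly rotates the triangle $(B_1,C_3,C_4)$ about $B_1$ while $(C_1,C_2,B_1)$ stays fixed. Hence
\[
\{\beta_1,\delta_1\}([\rho])\neq 0 \iff \frac{\partial\delta_1}{\partial\gamma_1}([\rho])\neq 0.
\]
Regularity of the chain (Fact~\ref{fact:regular-triangle-chains}) guarantees that $C_1,C_2,C_3$ are all distinct from $B_1$. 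It also guarantees that the triangle $(C_1,C_2,B_1)$ is non-degenerate, with interior angle $\beta_1/2\in(0,\pi)$ at $B_1$.

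\textbf{Step 2: $\delta_1$ as a function of one distance.} Take the candidate $d_1=(c_2c_3)^{-1}$. The fixed point $Y$ of $\rho(c_2c_3)$ and the points $C_2,C_3$ form the first triangle of the corresponding chain. When it is non-degenerate, Fact~\ref{fact:interior-angles-triangle-chains} gives its angles as $\pi-\alpha_2/2$, $\pi-\alpha_3/2$ and $\pi-\delta_1/2$; in the other regime of~\eqref{eq:angle-condition} the supplementary convention applies, and the argument is the same. The dual hyperbolic law of cosines then reads
\[
\cosh d(C_2,C_3)=\frac{\cos A\cos B+\cos C}{\sin A\sin B},\qquad C=\pi-\delta_1/2 .
\]
So $\cos(\delta_1/2)$ is an affine function of $\cosh d(C_2,C_3)$ with nonzero slope. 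Moreover $\delta_1\mapsto\cos(\delta_1/2)$ has nonzero derivative on $(0,2\pi)$. This formula extends continuously to the degenerate case $C_2=C_3$. Consequently, $\partial\delta_1/\partial\gamma_1\neq0$ if and only if $\partial\cosh d(C_2,C_3)/\partial\gamma_1\neq 0$.

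\textbf{Step 3: dependence on $\gamma_1$.} Set $a=d(B_1,C_2)>0$ and $b=d(B_1,C_3)>0$; both stay constant along the flow. The oriented angle $\theta=\angle(C_3,B_1,C_2)$ equals $\gamma_1$ plus a constant. The law of cosines gives
\[
\cosh d(C_2,C_3)=\cosh a\cosh b-\sinh a\sinh b\cos\theta,
\]
whose $\gamma_1$-derivative is $\sinh a\sinh b\sin\theta$. This vanishes exactly when $C_3$ lies on the geodesic through $B_1$ and $C_2$. The identical computation for the candidate $(c_1c_3)^{-1}$ shows that the bracket vanishes exactly when $C_3$ lies on the geodesic through $B_1$ and $C_1$.

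\textbf{Step 4: conclusion.} Both brackets vanish only if $C_3$ lies on both geodesics $B_1C_2$ and $B_1C_1$, at distance $b>0$ from $B_1$. These geodesics meet only at $B_1$, since the angle $\beta_1/2$ at $B_1$ lies strictly between $0$ and $\pi$. This forces $C_3=B_1$, contradicting regularity. Hence at least one of the two candidates works.

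The main obstacle is Step 2. One must check the orientation and sign conventions: that the curve $(c_2c_3)^{-1}$ rather than $c_2c_3$ gives the relevant angle, which only changes $\delta_1$ to $2\pi-\delta_1$. One must also check that the monotone relation between $\delta_1$ and $d(C_2,C_3)$ holds uniformly in both regimes of~\eqref{eq:angle-condition}, including when the triangle $(C_2,C_3,Y)$ degenerates.
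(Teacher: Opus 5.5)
Your proof is correct and follows essentially the same route as the cited lemma of~\cite{orbit-closures}, whose mechanism this paper invokes. There, $\cos(\delta_1/2)$ is affine with nonzero slope in the cosine of the relevant angle at $B_1$ (as in~\cite[Equation~(3.9)]{orbit-closures}), so the bracket for $(c_2c_3)^{-1}$ vanishes only when $\gamma_1\in\{0,\pi\}$ and the one for $(c_1c_3)^{-1}$ only when $\gamma_1\in\beta_1/2+\{0,\pi\}$ (the characterization~\cite[Lemma~3.10]{orbit-closures} used in Claims~\ref{claim:1-Di-neq-Bi+1} and~\ref{claim:2-Di-neq-Bi+1}); your Step~4, that the geodesics $B_1C_1$ and $B_1C_2$ meet only at $B_1\neq C_3$, is the geometric form of the disjointness of these two sets.
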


We will now prove Proposition~\ref{prop:transversality} for an arbitrary number $n\geq 4$ of punctures on $\surface$ by applying Fact~\ref{fact:transversality-n=4} several times.

\subsection{Iterated construction}\label{sec:iterated-construction}
We construct the pants curves $d_1,\ldots,d_{n-3}$ of $\mathcal{D}$ one after another.
\subsubsection{The first curve}
We start the sub-sphere $\surface^{(1)}\subset \surface$ with peripheral curves $(c_1,c_2,c_3,b_2)$ illustrated on Figure~\ref{fig:first-sub-sphere}. 

\begin{figure}[h]
    \centering
    \begin{tikzpicture}[scale=1.3, decoration={
    markings,
    mark=at position 0.6 with {\arrow{>}}}]
    \draw[postaction={decorate}] (0,-.5) arc(-90:-270: .25 and .5) node[midway, left]{$c_1$};
    \draw[black!40] (0,.5) arc(90:-90: .25 and .5);
    \draw[apricot] (2,.5) arc(90:270: .25 and .5) node[midway, left]{$b_1$};
    \draw[lightapricot] (2,.5) arc(90:-90: .25 and .5);
   
    \draw[black!40] (6,.5) arc(90:270: .25 and .5) node[midway, left]{$b_3$};
    \draw[black!40] (6,.5) arc(90:-90: .25 and .5);
    \draw[black!40] (8,.5) arc(90:270: .25 and .5); 
    \draw[black!40] (8,.5) arc(90:-90: .25 and .5);
  
    \draw (.5,1) arc(180:0: .5 and .25) node[midway, above]{$c_2$};
    \draw[postaction={decorate}] (.5,1) arc(-180:0: .5 and .25);
    \draw (2.5,1) arc(180:0: .5 and .25)node[midway, above]{$c_3$};
    \draw[postaction={decorate}] (2.5,1) arc(-180:0: .5 and .25);
    \draw[black!40] (4.5,1) arc(180:0: .5 and .25)node[midway, above]{$c_4$};
    \draw[black!40, postaction={decorate}] (4.5,1) arc(-180:0: .5 and .25);
    \draw[black!40] (6.5,1) arc(180:0: .5 and .25)node[midway, above]{$c_5$};
    \draw[black!40, postaction={decorate}] (6.5,1) arc(-180:0: .5 and .25);
    \draw (4,.5) arc(90:270: .25 and .5) node[midway, left]{$b_2$};
    \draw (4,.5) arc(90:-90: .25 and .5);
    \draw (8.7, 0) node{$\ldots$};
   
    \draw (0,.5) to[out=0,in=-90] (.5,1);
    \draw (1.5,1) to[out=-90,in=180] (2,.5);
    \draw (0,-.5) to[out=0,in=180] (2,-.5);
  
    \draw (2,.5) to[out=0,in=-90] (2.5,1);
    \draw (3.5,1) to[out=-90,in=180] (4,.5);
    \draw (2,-.5) to[out=0,in=180] (4,-.5);
  
    \draw[black!40] (4,.5) to[out=0,in=-90] (4.5,1);
    \draw[black!40] (5.5,1) to[out=-90,in=180] (6,.5);
    \draw[black!40] (4,-.5) to[out=0,in=180] (6,-.5);
  
    \draw[black!40] (6,.5) to[out=0,in=-90] (6.5,1);
    \draw[black!40] (7.5,1) to[out=-90,in=180] (8,.5);
    \draw[black!40] (6,-.5) to[out=0,in=180] (8,-.5);
    \end{tikzpicture}
    \caption{The sub-sphere $\surface^{(1)}$.}
    \label{fig:first-sub-sphere}
    \end{figure}

The simple closed curve represented by $b_1$ defines a pants decomposition $\mathcal{B}_1$ of $\surface^{(1)}$. The restriction $\rho^{(1)}$ of $\rho$ to $\pi_1\surface^{(1)}$ is still a DT representation with regular $\mathcal{B}_1$-triangle chain (it is a sub-chain of the $\mathcal{B}$-triangle chain of $\rho$ which is regular). Its conjugacy class belongs to the DT component $\RepDTarg{\alpha^{(1)}}{\surface^{(1)}}$, where $\alpha^{(1)}$ is the angle vector $(\alpha_1,\alpha_2,\alpha_3,\beta_2([\rho]))$. Applying Fact~\ref{fact:transversality-n=4} to $\rho^{(1)}$, we obtain a first pants curve $d_1$ on $\surface^{(1)}$ (which is either equal to $(c_2c_3)^{-1}$ or $(c_1c_3)^{-1}$, see Figure~\ref{fig:second-sub-sphere}) such that $\{\beta_1,\delta_1\}([\rho^{(1)}])\neq 0$. By Equation~\eqref{eq:sub-spheres} from Remark~\ref{rem:sub-spheres}, we conclude that $\{\beta_1,\delta_1\}([\rho])=\{\beta_1,\delta_1\}([\rho^{(1)}])\neq 0$. Furthermore, $\{\beta_i,\delta_1\}([\rho])=0$ for $i>1$ because the curve $d_1$ is disjoint from the curves $b_i$ for $i>1$.

\subsubsection{The second curve}
We now consider the next sub-sphere $\surface^{(2)}\subset S$ illustrated on Figure~\ref{fig:second-sub-sphere} with peripheral curves being either
\begin{itemize}
    \item $(c_1,d_1^{-1},c_4,b_3)$ if $d_1=(c_2c_3)^{-1}$, or
    \item $(c_1c_2c_1^{-1}, d_1^{-1}, c_4, b_3)$ if $d_1=(c_1c_3)^{-1}$.
\end{itemize}
\begin{figure}[h]
    \centering
    \begin{tikzpicture}[scale=1.3, decoration={
    markings,
    mark=at position 0.6 with {\arrow{>}}}]
  \draw[postaction={decorate}] (0,-.5) arc(-90:-270: .25 and .5) node[midway, left]{$c_1$};
   \draw[black!40] (0,.5) arc(90:-90: .25 and .5);
   \draw[apricot] (4,.5) arc(90:270: .25 and .5) node[near end, above left]{$b_2$};
   \draw[lightapricot] (4,.5) arc(90:-90: .25 and .5);
   \draw (6,.5) arc(90:270: .25 and .5) node[near end, above left]{$b_3$};
   \draw (6,.5) arc(90:-90: .25 and .5);
  \draw[black!40] (8,.5) arc(90:270: .25 and .5);
   \draw[black!40] (8,.5) arc(90:-90: .25 and .5);
   \draw (8.7, 0) node{$\ldots$};
  
   \draw[black!40] (.5,1) arc(180:0: .5 and .25) node[midway, above]{$c_2$};
  \draw[black!40, postaction={decorate}] (.5,1) arc(-180:0: .5 and .25);
   \draw[black!40] (2.5,1) arc(180:0: .5 and .25)node[midway, above]{$c_3$};
   \draw[black!40, postaction={decorate}] (2.5,1) arc(-180:0: .5 and .25);
   \draw (4.5,1) arc(180:0: .5 and .25)node[midway, above]{$c_4$};
   \draw[postaction={decorate}] (4.5,1) arc(-180:0: .5 and .25);
   \draw[black!40] (6.5,1) arc(180:0: .5 and .25)node[midway, above]{$c_5$};
   \draw[black!40, postaction={decorate}] (6.5,1) arc(-180:0: .5 and .25);

   \draw[plum] (3.58,.73) arc(0:180:1.58 and -.5) node[near end, below]{$d_1$};
   \draw[lightplum] (3.58,.73) arc(-50:-130:2.46 and 1.25);
   
   \draw (0,.5) to[out=0,in=-125] (.42,.73);
   \draw[black!40] (.42,.73) to[out=55,in=-90] (.5,1);
   \draw[black!40] (1.5,1) to[out=-90,in=180] (2,.5);
   \draw (0,-.5) to[out=0,in=180] (2,-.5);
  
   \draw[black!40] (2,.5) to[out=0,in=-90] (2.5,1);
   \draw[black!40] (3.5,1) to[out=-90,in=125] (3.58,.73);
   \draw (3.58,.73) to[out=-55,in=180] (4,.5);
   \draw (2,-.5) to[out=0,in=180] (4,-.5);
  
   \draw (4,.5) to[out=0,in=-90] (4.5,1);
   \draw (5.5,1) to[out=-90,in=180] (6,.5);
   \draw (4,-.5) to[out=0,in=180] (6,-.5);
  
   \draw[black!40] (6,.5) to[out=0,in=-90] (6.5,1);
   \draw[black!40] (7.5,1) to[out=-90,in=180] (8,.5);
   \draw[black!40] (6,-.5) to[out=0,in=180] (8,-.5);
 \end{tikzpicture}
 \bigskip
 \begin{tikzpicture}[scale=1.3, decoration={
     markings,
     mark=at position 0.6 with {\arrow{>}}}]
   \draw[black!40, postaction={decorate}] (0,-.5) arc(-90:-270: .25 and .5) node[midway, left]{$c_1$};
   \draw[black!40] (0,.5) arc(90:-90: .25 and .5);
   \draw[apricot] (4,.5) arc(90:270: .25 and .5) node[near end, above left]{$b_2$};
   \draw[lightapricot] (4,.5) arc(90:-90: .25 and .5);
   \draw (6,.5) arc(90:270: .25 and .5) node[near end, above left]{$b_3$};
   \draw (6,.5) arc(90:-90: .25 and .5);
   \draw[black!40] (8,.5) arc(90:270: .25 and .5);
   \draw[black!40] (8,.5) arc(90:-90: .25 and .5);
   \draw (8.7, 0) node{$\ldots$};
  
   \draw[lightplum] (3.57,.73) arc(170:10: -.58 and .25);
   \draw[lightplum] (.2,.545) edge[out=-10,in=160,-] (.8,-.5);
   \draw[plum] (.2,.545) arc(-150:-10: 1.2 and .6) node[near start, below]{$d_1$} (.8,-.5);
   \draw[plum] (3.57,.73) edge[out=270,in=20,-] (.8,-.5);
   \draw (.5,1) arc(180:0: .5 and .25) node[midway, above]{$c_2$};
   \draw[postaction={decorate}] (.5,1) arc(-180:0: .5 and .25);
   \draw[black!40] (2.5,1) arc(180:0: .5 and .25)node[midway, above]{$c_3$};
   \draw[black!40, postaction={decorate}] (2.5,1) arc(-180:0: .5 and .25);
   \draw (4.5,1) arc(180:0: .5 and .25)node[midway, above]{$c_4$};
   \draw[postaction={decorate}] (4.5,1) arc(-180:0: .5 and .25);
   \draw[black!40] (6.5,1) arc(180:0: .5 and .25)node[midway, above]{$c_5$};
   \draw[black!40, postaction={decorate}] (6.5,1) arc(-180:0: .5 and .25);
   
   \draw[black!40] (0,.5) to[out=0,in=-150] (.2,.55);
   \draw (.2,.55) to[out=30,in=-90] (.5,1);
   \draw (1.5,1) to[out=-90,in=180] (2,.5);
   \draw[black!40] (0,-.5) to[out=0,in=180] (.8,-.5);
   \draw (.8,-.5) to[out=0,in=180] (2,-.5);
  
   \draw (2,.5) to[out=0,in=-110] (2.43,.73);
   \draw[black!40] (2.43,.73) to[out=70,in=-90] (2.5,1);
   \draw[black!40] (3.5,1) to[out=-90,in=110] (3.57,.73);
   \draw (3.57,.73) to[out=-70,in=180] (4,.5);
   \draw (2,-.5) to[out=0,in=180] (4,-.5);
  
   \draw (4,.5) to[out=0,in=-90] (4.5,1);
   \draw (5.5,1) to[out=-90,in=180] (6,.5);
   \draw (4,-.5) to[out=0,in=180] (6,-.5);
  
   \draw[black!40] (6,.5) to[out=0,in=-90] (6.5,1);
   \draw[black!40] (7.5,1) to[out=-90,in=180] (8,.5);
   \draw[black!40] (6,-.5) to[out=0,in=180] (8,-.5);
 \end{tikzpicture}
    \caption{Above: the sub-sphere $\surface^{(2)}$ when $d_1=(c_2c_3)^{-1}$. Below: the sub-sphere $\surface^{(2)}$ when $d_1=(c_1c_3)^{-1}$.}
    \label{fig:second-sub-sphere}
\end{figure}

Note that the product of the four peripheral curves is always trivial, and so they give a system of geometric generators of $\pi_1\surface^{(2)}$. Also note that $b_2^{-1}$ is always the product of the first two peripheral curves, so that $b_2$ defines a pants decomposition $\mathcal{B}_2$ of $\surface^{(2)}$. In order to apply Fact~\ref{fact:transversality-n=4} again, we have to check that the restriction $\rho^{(2)}$ of $\rho$ to $\pi_1\surface^{(2)}$ has a regular $\mathcal{B}_2$-triangle chain, which we denote by $\mathcal{T}_2$. Conveniently, the second triangle in $\mathcal{T}_2$---the one whose vertices are the fixed points of $\rho(b_2), \rho(c_4),\rho(b_3)$---is also a triangle in the $\mathcal{B}$-triangle chain of $[\rho]$, hence is non-degenerate. Two of the vertices of the first triangle in $\mathcal{T}_2$ are the fixed points of $\rho(d_1)$ and $\rho(b_2)$, which we denote by $D_1$ and $B_2$. So, $\mathcal{T}_2$ is regular if and only if $D_1\neq B_2$. We prove that this is always the case.
\begin{claim}\label{claim:1-Di-neq-Bi+1}
    If $d_1=(c_2c_3)^{-1}$, then $D_1\neq B_2$.
\end{claim}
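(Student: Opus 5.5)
The idea is to argue by contradiction and to use the defining property of $d_1$ from the first step, namely $\{\beta_1,\delta_1\}([\rho^{(1)}])\neq 0$. The point to exploit is that $D_1=B_2$ is a degeneracy of a triangle chain \emph{adapted to $d_1$} on the sub-sphere $\surface^{(1)}$. By Fact~\ref{fact:regular-triangle-chains}, such a degeneracy forces $d\delta_1$ to vanish at $[\rho^{(1)}]$, which kills the Poisson bracket.

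\textbf{Step 1: a presentation adapted to $d_1$.} On $\surface^{(1)}$ we use the geometric generators $(c_1,c_2,c_3,b_2)$, which satisfy $c_1c_2c_3b_2=1$ because $b_2=(c_1c_2c_3)^{-1}$. Cyclically permuting them gives the tuple $(c_2,c_3,b_2,c_1)$. This is again a geometric presentation of $\pi_1\surface^{(1)}$, since
\[
c_2c_3b_2c_1=c_1^{-1}(c_1c_2c_3b_2)c_1=1 .
\]
Its first pants curve is $(c_2c_3)^{-1}=d_1$. So this presentation is compatible with the pants decomposition $\{d_1\}$ of $\surface^{(1)}$, and the associated triangle chain of $\rho^{(1)}$ consists of the two triangles $(C_2,C_3,D_1)$ and $(D_1,B_2,C_1)$.

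\textbf{Step 2: $D_1=B_2$ makes this chain singular.} Suppose $D_1=B_2$. Then the second triangle $(D_1,B_2,C_1)$ is degenerate. This is consistent with the general remark that a triangle in a chain degenerates only when all three vertices coincide. Indeed, $c_1=(c_2c_3b_2)^{-1}=(d_1^{-1}b_2)^{-1}$, so $\rho(c_1)$ fixes the common fixed point of $\rho(d_1)$ and $\rho(b_2)$, and hence $C_1=D_1=B_2$. Therefore the $\{d_1\}$-triangle chain of $\rho^{(1)}$ is not regular. Applying Fact~\ref{fact:regular-triangle-chains} on $\surface^{(1)}$ to the chained pants decomposition $\{d_1\}$ with the compatible presentation above, we get $(d\delta_1)_{[\rho^{(1)}]}=0$. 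Here $\delta_1$ denotes the angle function of $d_1$ on $\RepDTarg{\alpha^{(1)}}{\surface^{(1)}}$.

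\textbf{Step 3: contradiction.} Since $\{\beta_1,\delta_1\}=\omega_{\mathcal G}(X_{\beta_1},X_{\delta_1})=d\delta_1(X_{\beta_1})$ up to sign, Step~2 gives $\{\beta_1,\delta_1\}([\rho^{(1)}])=0$. This contradicts the choice of $d_1$ via Fact~\ref{fact:transversality-n=4}, which guarantees $\{\beta_1,\delta_1\}([\rho^{(1)}])\neq 0$. Hence $D_1\neq B_2$.

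\textbf{Main obstacle.} The delicate step is Step~1 together with the appeal to Fact~\ref{fact:regular-triangle-chains}. I must check that the cyclically permuted tuple is a legitimate compatible geometric presentation, including the orientation conventions for peripheral loops. I must also check that Fact~\ref{fact:regular-triangle-chains} applies to $\surface^{(1)}$ with the angle vector obtained by permuting $\alpha^{(1)}$, which still satisfies~\eqref{eq:angle-condition}.

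If that route runs into convention issues, I would fall back on Fact~\ref{fact:fixed-points-Dehn-twists} applied to the same presentation. With $i=1$, $j=2$, the condition $C_1=B_2$ means that $[\rho^{(1)}]$ is fixed by the Dehn twist along $d_1$. Combined with the rotation description of Fact~\ref{fact:action-of-Dehn-twist-angle-coordinates}, this again says that the Hamiltonian flow of $\delta_1$ fixes $[\rho^{(1)}]$, so $d\delta_1=0$ there, and the same contradiction follows.
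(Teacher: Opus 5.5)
Your proof is correct, but it takes a different route from the paper's. The paper argues inside the original $\mathcal{B}$-chain. If $D_1=B_2$, the triangles $(B_1,C_3,B_2)$ and $(C_2,C_3,D_1)$ have the same angle $\pi-\alpha_3/2$ at $C_3$, so $C_2,B_1,C_3$ are collinear, i.e.\ $\gamma_1([\rho])\in\{0,\pi\}$. The paper then invokes the external criterion \cite[Lemma~3.10]{orbit-closures} to get $\{\beta_1,\delta_1\}([\rho])=0$.

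You instead change presentation on $\surface^{(1)}$, so that $D_1=B_2$ becomes the degeneration of the second triangle $(D_1,B_2,C_1)$ of the $\{d_1\}$-chain of $\rho^{(1)}$. Fact~\ref{fact:regular-triangle-chains} then gives $(d\delta_1)_{[\rho^{(1)}]}=0$ directly.

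Your route has several advantages:
\begin{itemize}
\item It uses only a fact already stated in the paper, with no interior-angle conventions and no outside lemma.
\item It needs only the weaker hypothesis $(d\delta_1)_{[\rho^{(1)}]}\neq 0$.
\item It handles Claim~\ref{claim:2-Di-neq-Bi+1} and the later inductive steps uniformly. For $d_1=(c_1c_3)^{-1}$, use the geometric presentation $(c_1,c_3,b_2,c_1c_2c_1^{-1})$, whose second triangle is $(D_1,B_2,\rho(c_1)C_2)$. In general, the first triangle of $\mathcal{T}_i$ is the pair of pants of $\surface^{(i-1)}$ cut along $d_{i-1}$ that is bounded by $b_i$. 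So the ``analogues'' the paper leaves implicit become immediate.
\end{itemize}
The paper's route, for its part, locates the obstruction concretely in the original angle coordinate, in parallel with Claim~\ref{claim:exclude-0-pi}.

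The convention issues you flag are harmless. The cyclic permutation satisfies the product relation and keeps the angle sum, hence~\eqref{eq:angle-condition}. Reversing the orientation of $d_1$ only changes the sign of $d\delta_1$.
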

\begin{proofclaim}
    The situation is similar to the one of Claim~\ref{claim:exclude-0-pi} (with $i=2$ and $j=3$, and $Y_1=D_1$). If $D_1=B_2$, then the triangles $(B_1,C_3,B_2)$ and $(C_2,C_3,D_1)$ both have an angle $\pi-\alpha_3/2$ at $C_3$, which forces $C_3,B_2,C_2$ to be collinear. This implies $\gamma_1([\rho])=\pi$ or $\gamma_1([\rho])=0$ (actually $\gamma_1([\rho])=0$ is impossible, but we do not this observation here). The configuration with $\gamma_1([\rho])=\pi$ is illustrated on Figure~\ref{fig:triangle-chain-gamma=pi-bis}. This would however imply that $\{\beta_1,\delta_1\}([\rho])=0$ by~\cite[Lemma~3.10]{orbit-closures}, a contradiction.
\end{proofclaim}

    \begin{figure}[h]
    \centering
    \begin{tikzpicture}[font=\sffamily, scale=1.1]
    \node[anchor=south west,inner sep=0] at (0,0) {\includegraphics[width=10.8cm]{fig/triangle-chain-gamma=pi.pdf}};

    \begin{scope}
    \fill (9.76,3.85) circle (0.09) node[right]{$C_2$};
    \fill (3.59,0.04) circle (0.09) node[below]{$C_3$};
    \draw (1.8, 2.63) node{\small $\pi-\frac{\beta_{2}}{2}$};
    \draw (3.6, 2.2) node{\small $\frac{\beta_{1}}{2}$};
    \end{scope}

    \begin{scope}[plum]
    \draw (4.53, .85) node{\small $\pi-\alpha_{3}/2$};
    \draw (-.05, 3.35) node{\small $\pi-\delta_{1}/2$}; 
    \draw (7.92, 4.38) node{\small $\pi-\alpha_{2}/2$};
    \end{scope}

    \begin{scope}[apricot]
    \fill (0.09,2.37) circle (0.09) node[left]{$D_1=B_{2}$};
    \fill (4.52,2.2) circle (0.09);
    \draw[anchor=south] (4.5, 2.3) node{$B_1$};
    \end{scope}

    \begin{scope}[sky]
    \draw (5,2) node[right]{$\gamma_1([\rho])=\pi$};
    \end{scope}
    \end{tikzpicture}
    \caption{The configuration in which $D_1 = B_2$, forcing $C_2,B_1,C_3$ to be collinear. The triangles are drawn slightly offset to highlight their superposition.}
    \label{fig:triangle-chain-gamma=pi-bis}
    \end{figure}

\begin{claim}\label{claim:2-Di-neq-Bi+1}
    If $d_1=(c_1c_3)^{-1}$, then $D_1\neq B_2$.
\end{claim}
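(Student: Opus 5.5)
The plan is to reduce Claim~\ref{claim:2-Di-neq-Bi+1} to Claim~\ref{claim:1-Di-neq-Bi+1} by a change of geometric presentation of $\pi_1\surface^{(1)}$ under which $d_1=(c_1c_3)^{-1}$ becomes a curve of the form ``$(\text{second}\cdot\text{third})^{-1}$''. Concretely, on the sub-sphere $\surface^{(1)}$ with peripheral curves $(c_1,c_2,c_3,b_2)$, I would use the generators
\[
(c_1c_2c_1^{-1},\ c_1,\ c_3,\ b_2).
\]
Their product is $c_1c_2c_3b_2=1$, so this is again a geometric presentation, obtained from the original one by a half-twist exchanging the first two punctures. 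In this presentation:
\begin{itemize}
\item the first pants curve is $(c_1c_2c_1^{-1}\cdot c_1)^{-1}=(c_1c_2)^{-1}=b_1$, so the pants decomposition $\mathcal{B}_1$ is unchanged;
\item the curve $d_1=(c_1c_3)^{-1}$ is now the product of the second and third generators.
\end{itemize}
The corresponding $\mathcal{B}_1$-triangle chain of $\rho^{(1)}$ is
\[
(\rho(c_1)C_2,\ C_1,\ B_1),\qquad (B_1,\ C_3,\ B_2).
\]
The second triangle is the same as before, and the first is the image of $(C_1,C_2,B_1)$ under a reflection-type rearrangement, with $\rho(c_1)C_2$ the reflection of $C_2$ across the line $C_1B_1$. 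Both triangles are non-degenerate, so this chain is still regular. The moment maps $\beta_1$ and $\delta_1$ are intrinsic to the curves, so the hypothesis $\{\beta_1,\delta_1\}([\rho])\neq 0$ is unaffected by the relabeling.

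I would then run the argument of Claim~\ref{claim:1-Di-neq-Bi+1} verbatim, with $C_2$ replaced by $C_1$. Suppose $D_1=B_2$. First, the triangle $(C_1,C_3,D_1)$ is non-degenerate. Indeed, if it were degenerate then all three vertices would coincide, giving $C_3=D_1=B_2$, which would make $(B_1,C_3,B_2)$ degenerate, a contradiction. By Fact~\ref{fact:interior-angles-triangle-chains}, both $(C_1,C_3,D_1)$ and $(B_1,C_3,B_2)$ have interior angle $\pi-\alpha_3/2$ at $C_3$, and they share the side $C_3D_1=C_3B_2$. Hence $C_1$ lies on the geodesic through $C_3$ and $B_1$, so $C_1,B_1,C_3$ are collinear. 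In angle-coordinate terms, this says the angle coordinate $\gamma_1'=\angle(C_3,B_1,C_1)$ associated with the new presentation is $0$ or $\pi$.

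Finally, \cite[Lemma~3.10]{orbit-closures}, applied in the new presentation, gives $\{\beta_1,\delta_1\}([\rho^{(1)}])=0$. By \eqref{eq:sub-spheres} this means $\{\beta_1,\delta_1\}([\rho])=0$, contradicting the choice of $d_1$ via Fact~\ref{fact:transversality-n=4}.

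The main obstacle I expect is bookkeeping rather than geometry. Three things need checking:
\begin{itemize}
\item that the regime and orientation conventions of Fact~\ref{fact:interior-angles-triangle-chains} still apply after the half-twist, including orientation of the new first triangle and the rotation angle of $c_1c_2c_1^{-1}$, which equals $\alpha_2$;
\item that \cite[Lemma~3.10]{orbit-closures} is stated for an arbitrary compatible geometric presentation, so that it may be invoked for $(c_1c_2c_1^{-1},c_1,c_3,b_2)$;
\item that its case ``$\gamma=\pi$'' matches the collinear configuration here, with $\gamma_1'=0$ handled likewise or excluded.
\end{itemize}
If the lemma is only stated for the standard labeling, I would transport it along the half-twist mapping class, which acts as a symplectomorphism of the DT component and so preserves Poisson brackets.
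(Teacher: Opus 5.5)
Your proof is correct, but it follows a different route from the paper's.

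\textbf{The paper's route.} It stays in the original presentation and uses the triangle on the \emph{other} side of $d_1$. The first triangle of $\mathcal{T}_2$ is $(\rho(c_1)C_2,D_1,B_2)$, so $D_1=B_2$ leaves it with two coincident vertices. It therefore collapses to a point, giving $B_2=\rho(c_1)C_2$. This fixes the first two triangles of the $\mathcal{B}$-chain in the configuration $\gamma_1=\beta_1/2$, and the $(c_1c_3)$-case of \cite[Lemma~3.10]{orbit-closures} gives the contradiction.

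\textbf{Your route.} You relabel $\surface^{(1)}$ by the half-twist so that $d_1$ becomes ``second$\cdot$third''. You then rerun the equal-angle argument of Claim~\ref{claim:1-Di-neq-Bi+1} on the triangles $(C_1,C_3,D_1)$ and $(B_1,C_3,B_2)$.

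Both routes reach the same degenerate configuration. In the paper's figure for $B_2=\rho(c_1)C_2$, the angles $\pi-\beta_1/2$ and $\beta_1/2$ at $B_1$ add up to a straight angle. So $C_1,B_1,C_3$ are collinear there, i.e.\ your $\gamma_1'$ equals $\pi$.

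\textbf{What each buys.} The paper's argument is shorter and needs no change of presentation. Yours shows that the two alternatives in Fact~\ref{fact:transversality-n=4} are one case up to relabeling. It also only needs the $(c_2c_3)$-case of Lemma~3.10, which Claim~\ref{claim:1-Di-neq-Bi+1} already uses. Finally, it gives a uniform template for the ``analogues'' at steps $i\geq 3$, where $x$ is only a conjugate.

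\textbf{Bookkeeping.} The checks you flag are harmless. Facts~\ref{fact:interior-angles-triangle-chains} and~\ref{fact:transversality-n=4} and Lemma~3.10 concern an arbitrary $4$-punctured sphere with arbitrary geometric presentation and angle vector. The paper itself applies them to non-standard presentations, e.g.\ to $\Upsilon$ in Section~\ref{sec:no-atoms}.

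\textbf{One correction to your fallback.} The half-twist is not a pure mapping class, so it does not act on $\RepDTarg{\alpha^{(1)}}{\surface^{(1)}}$. Instead it identifies this component symplectically with the DT component of angle vector $(\alpha_2,\alpha_1,\alpha_3,\beta_2([\rho]))$. Simpler still, apply Lemma~3.10 directly to the presentation $(c_1c_2c_1^{-1},c_1,c_3,b_2)$ with that permuted angle vector.
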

\begin{proofclaim}
    In that case, the first triangle in $\mathcal{T}_2$ has vertices $(\rho(c_1)C_2, D_1, B_2)$, where $C_2$ is the fixed point of $\rho(c_2)$. So, if $D_1=B_2$, then that triangle would be degenerate to a single point and thus $B_2=\rho(c_1)C_2$. This would force the first two triangles in the $\mathcal{B}$-triangle chain of $[\rho]$ to be in configuration illustrated on Figure~\ref{fig:triangle-chain-gamma=beta2}, implying $\gamma_1([\rho])=\beta_1([\rho])/2$, and also $\{\beta_1,\delta_1\}([\rho])=0$ by~\cite[Lemma~3.10]{orbit-closures}. This is again a contradiction. 
    \end{proofclaim}
    
    \begin{figure}[h]
    \centering
    \begin{tikzpicture}[font=\sffamily, scale=1.1]
    \node[anchor=south west,inner sep=0] at (0,0) {\includegraphics[width=10.8cm]{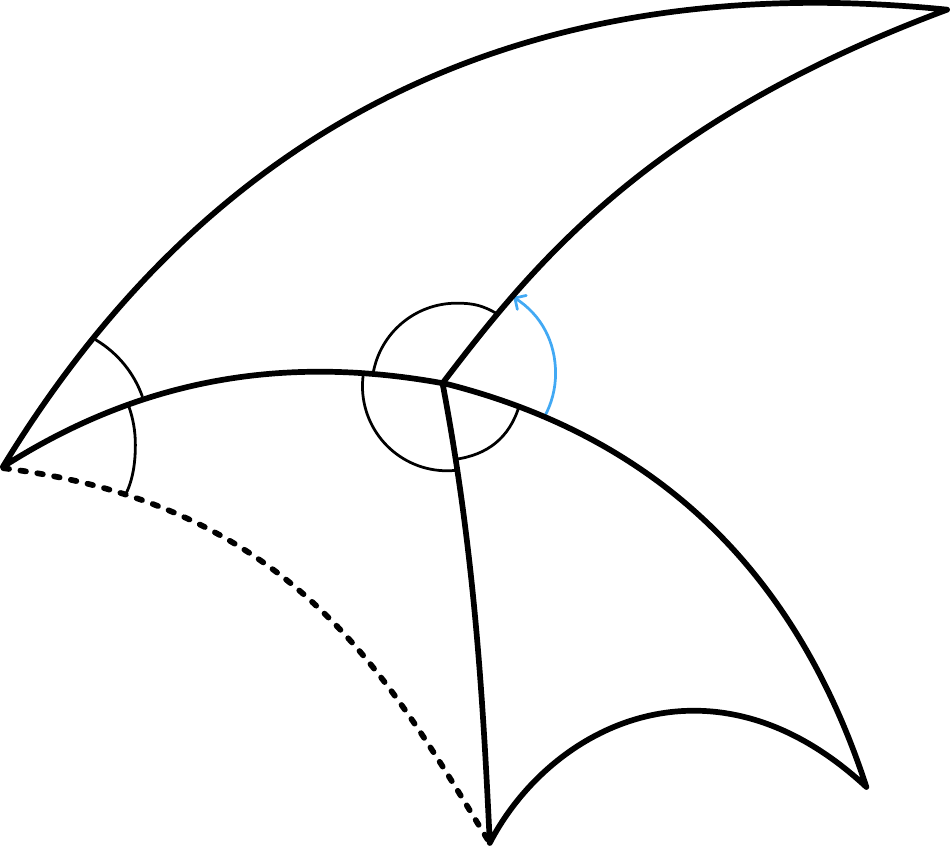}};

    \begin{scope}
    \fill (0.04,3.93) circle (0.07) node[left]{$C_1$};
    \fill (9.8,8.65) circle (0.07) node[right]{$C_2$};
    \fill (8.95,.65) circle (0.07) node[right]{$C_3$};
    \draw (4.3, 5.85) node{\small $\pi-\beta_1/2$};
    \draw (3.6, 3.8) node{\small $\pi-\beta_1/2$};
    \draw (5.35, 3.95) node{\small $\beta_1/2$};
    \draw (1.9, 5.2) node{\small $\pi-\alpha_1/2$};
    \draw (2.1, 4.2) node{\small $\pi-\alpha_1/2$};
    \end{scope}

    \begin{scope}[apricot]
    \fill (5.08, 0.05) circle (0.09) node[below]{$B_2=\rho(c_1)C_2$};
    \fill (4.59,4.76) circle (0.09) node[below left]{$B_1$};
    \end{scope}

    \begin{scope}[sky]
    \draw (5.73,5.1) node[right]{$\gamma_1([\rho])=\beta_1/2$};
    \end{scope}
    \end{tikzpicture}
    \caption{The configuration in which $B_2 = \rho(c_1)C_2$, forcing $\gamma_1([\rho])=\beta_1/2$.}
    \label{fig:triangle-chain-gamma=beta2}
    \end{figure}  

We conclude that $\mathcal{T}_2$ is regular. We may now apply Fact~\ref{fact:transversality-n=4} to $\rho^{(2)}$ to get the next pants curve $d_2$, which is one of the following four curves:
\begin{align*}
    \text{If $d_1=(c_2c_3)^{-1}$, then } &d_2=\begin{cases}
        (d_1^{-1}c_4)^{-1}=(c_2c_3c_4)^{-1}, & \text{or}\\
        (c_1c_4)^{-1}.
    \end{cases}\\
    \text{If $d_1=(c_1c_3)^{-1}$, then } &d_2= \begin{cases}
        (d_1^{-1}c_4)^{-1}=(c_1c_3c_4)^{-1}, & \text{or}\\
        (c_1c_2c_1^{-1}c_4)^{-1}.
    \end{cases}
\end{align*}
In all four cases, we have $\{\beta_2,\delta_2\}([\rho^{(2)}])\neq 0$. As before, by~\eqref{eq:sub-spheres}, $\{\beta_2,\delta_2\}([\rho])=\{\beta_2,\delta_2\}([\rho^{(2)}])\neq 0$ and $\{\beta_i,\delta_2\}([\rho])=0$ for $i>2$. Note that since $d_2$ is curve on $\surface^{(2)}$ and $d_1$ is a peripheral curve of $\surface^{(2)}$, they are disjoint.

\subsubsection{The other curves}
Assume that we have just constructed the simple closed curve $d_{i-1}$ for $i\geq 3$ on a sub-sphere $\surface^{(i-1)}$ of $\surface$ with the property that $\{\beta_{i-1},\delta_{i-1}\}([\rho])\neq 0$. The next sub-sphere $\surface^{(i)}\subset\surface$ we consider has peripheral curves $(x,d_{i-1}^{-1},c_{i+2},b_{i+1})$, where $x$ is either a conjugate of a peripheral curve of $\surface$, or a conjugate of $d_{i-2}^{-1}$. The product of the first two peripheral curves is $b_i^{-1}$ which give a pants decomposition $\mathcal{B}_i$ of $\surface^{(i)}$. As before, the $\mathcal{B}_i$-triangle chain of the restriction $\rho^{(i)}$ of $\rho$ to $\pi_1\surface^{(i)}$ is regular if and only if $D_{i-1}\neq B_i$, where $D_{i-1}$ and $B_i$ are the fixed points of $\rho(d_{i-1})$ and $\rho(b_i)$. The analogues of Claims~\ref{claim:1-Di-neq-Bi+1} and~\ref{claim:2-Di-neq-Bi+1} hold and show that indeed $D_{i-1}\neq B_i$, because $\{\beta_{i-1},\delta_{i-1}\}([\rho])\neq 0$ by assumption.

We may thus apply Fact~\ref{fact:transversality-n=4} to $\rho^{(i)}$ to find the next simple closed curve $d_i$ which is either $(d_{i-1}^{-1}c_{i+2})^{-1}$ or $(xc_{i+2})^{-1}$. In both cases, the curve $d_i$ is disjoint from all the previously constructed curves $d_j$ for $j<i$ since it lives on $\surface^{(i)}$. Furthermore, $\{\beta_i,\delta_i\}([\rho])\neq 0$ by Fact~\ref{fact:transversality-n=4} and~\eqref{eq:sub-spheres}, and $\{\beta_j,\delta_i\}([\rho])= 0$ for $j>i$, for the curves $b_j$ and $d_i$ are disjoint.

\subsubsection{Conclusion}
The previous steps produce a pants decomposition $\mathcal{D}$ of $\surface$ with induced moment map $\delta\colon\RepDT\to\Delta_\mathcal{D}$ satisfying $\{\beta_i,\delta_i\}([\rho])\neq 0$ for $i=1,\ldots,n-3$ and $\{\beta_i,\delta_j\}([\rho])=0$ for $i>j$. Also note that the pants decomposition $\mathcal{D}$ is chained, as required, because the first $n-3$ pair of pants it determines have peripheral curves $(\ast, c_{i+2}, d_i)$ (hence all contain a puncture of $\surface$) and $c_n$ is a peripheral curve of the last pair of pants. This concludes the proof of Proposition~\ref{prop:transversality}.
\section{Proof of Theorem~\ref{main-thm}}\label{sec:proof}
\subsection{Overview}
We now turn to the proof of Theorem~\ref{main-thm}, which classifies ergodic probability measures on DT components. The argument proceeds as follows: we begin with some elementary observations (Section~\ref{sec:elementary-observations}), then use unique ergodicity of irrational torus rotations together with disintegration to show that most conditional measures are Lebesgue measures (Sections~\ref{sec:dynamics-on-fibers} and~\ref{sec:disintegration}), and finally conclude via a transversality argument (Section~\ref{sec:transversality-moment-maps}).

\subsection{First observations}\label{sec:elementary-observations}
In order to prove Theorem~\ref{main-thm}, we fix a DT component $\RepDT$. Its Goldman measure will be denoted by $\nuG$. Let $\mu$ be any Borel probability measure on $\RepDT$, which is ergodic for the action of the pure mapping class group $\PMod(\surface)$.

First, note that if $\mu$ has atoms, then $\mu$ is the counting measure along a finite orbit. From now on, let us assume that $\mu$ has no atoms. We will prove that $\mu=\nuG$. We start with two elementary claims.
\begin{claim}\label{claim:gzz-mu<<nuG}
Since $\nuG$ is ergodic and $\mu$ is invariant, it suffices to show that $\mu \ll \nuG$ (i.e.~$\mu$ is absolutely continuous with respect to $\nuG$) to conclude that $\mu=\nuG$.
\end{claim}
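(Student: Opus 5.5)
The plan is to use the Lebesgue decomposition of $\mu$ with respect to $\nuG$ together with the ergodicity of $\nuG$, established in~\cite{ergodicity}. Assume $\mu\ll\nuG$. By the Radon--Nikodym theorem, $\mu = h\,\nuG$ for some density $h\in L^1(\nuG)$ with $h\geq 0$ and $\int h\,d\nuG=1$.

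The first step is to show that $h$ is invariant. Both $\mu$ and $\nuG$ are $\PMod(\surface)$-invariant; the Goldman measure is invariant because the action is by symplectomorphisms of $\omega_\mathcal{G}$. Hence for every $f\in\PMod(\surface)$ we have $f_\ast\mu = (h\circ f^{-1})\,\nuG$. Since $f_\ast\mu=\mu$, uniqueness of Radon--Nikodym derivatives gives $h\circ f^{-1}=h$ holding $\nuG$-almost everywhere.

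The second step turns this into a constant. The group $\PMod(\surface)$ is countable, being finitely generated, so the exceptional null sets can be collected into a single $\nuG$-null set. Each superlevel set $\{h>t\}$ is therefore invariant modulo a $\nuG$-null set. Ergodicity of $\nuG$ then forces $\nuG(\{h>t\})\in\{0,1\}$ for every $t$. It follows that $h$ is $\nuG$-almost everywhere equal to a constant, and the normalization $\int h\,d\nuG=1$ makes that constant $1$. Thus $\mu=\nuG$.

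One technical point is that ergodicity is usually stated for sets that are strictly invariant. An almost-invariant set can be replaced by the strictly invariant set $\bigcap_{f}f(\{h>t\})$, which differs from it by a null set because the group is countable.

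An alternative route avoids densities altogether. Once $\mu\ll\nuG$, the ergodic measure $\mu$ is concentrated on a single ergodic component of $\nuG$. Since $\nuG$ is ergodic, it has only one component, so $\mu=\nuG$.

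There is no real obstacle here. The only care needed is the almost-everywhere bookkeeping described above, and that is handled by the countability of $\PMod(\surface)$.
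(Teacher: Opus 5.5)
Your proposal is correct and follows the paper's argument: take the Radon--Nikodym density, deduce from the invariance of $\mu$ and $\nuG$ that it is invariant $\nuG$-almost everywhere, and use ergodicity to force it to be the constant $1$. Your extra bookkeeping (countability of $\PMod(\surface)$, and replacing almost-invariant superlevel sets by strictly invariant ones) is sound. You also rightly invoke ergodicity of $\nuG$ at the constancy step, which is what that step needs, even though the paper's text says ``since $\mu$ is ergodic'' there.
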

\begin{proofclaim}
    If $\mu\ll \nuG$, then by the Radon--Nikodym theorem, there exists a measurable, non-negative function $f=d\mu/d\nuG$ such that
    \[
    \mu=\int f\, d\nuG.
    \]
    Invariance of $\mu$ and $\nuG$ implies invariance $\nuG$-almost everywhere of $f$. Since $\mu$ is ergodic, $f$ is thus constant $\nuG$-almost everywhere, and this constant must be $1$ because $\mu$ and $\nuG$ are probability measures. Hence, $\mu=\nuG$.
\end{proofclaim}

The second claim will allow us to work locally around every point. If $(X,m)$ is a Borel probability space and $U\subset X$ is a nonempty open set with $m(U)\neq 0$, then we call the Borel probability measure $m_U$ defined by $m_U(A)=m(A\cap U)/m(U)$ the \emph{localization} of $m$ on $U$. 

\begin{claim}\label{claim:gzz-muU<<nuGU}
To prove that $\mu \ll \nuG$, it suffices to show that every point of $\RepDT$ has an open neighborhood $U$ such that $\mu(U)=0$ or $\mu_U \ll (\nuG)_U$.
\end{claim}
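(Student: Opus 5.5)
The plan is a compactness argument combined with the characterization of absolute continuity through null sets. Fix a Borel set $A\subset\RepDT$ with $\nuG(A)=0$; we want to show $\mu(A)=0$. By hypothesis, every point $x\in\RepDT$ has an open neighborhood $U_x$ such that either $\mu(U_x)=0$ or $\mu_{U_x}\ll(\nuG)_{U_x}$. Since DT components are compact (they are symplectomorphic to $\mathbb{CP}^{n-3}$ by Deroin--Tholozan), we extract a finite subcover $U_1,\ldots,U_N$ of $\RepDT$ from $\{U_x\}_{x\in\RepDT}$. Compactness is convenient rather than essential: $\RepDT$ is a second countable manifold, so a countable subcover would serve equally well via countable subadditivity.

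For each $k$, we bound $\mu(A\cap U_k)$. If $\mu(U_k)=0$, then $\mu(A\cap U_k)\le\mu(U_k)=0$. Otherwise $\mu_{U_k}\ll(\nuG)_{U_k}$. If $\nuG(U_k)>0$, then $(\nuG)_{U_k}(A)=\nuG(A\cap U_k)/\nuG(U_k)=0$, hence $\mu_{U_k}(A)=0$, that is, $\mu(A\cap U_k)=0$.

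The case $\nuG(U_k)=0$ needs a separate check, because then $(\nuG)_{U_k}$ is not defined as written. It cannot occur: $\nuG$ is the normalized Liouville measure of a symplectic form, so it has a smooth positive density in local charts and assigns positive mass to every nonempty open set. Thus the localization is always well defined.

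Finally, subadditivity gives $\mu(A)\le\sum_{k=1}^N\mu(A\cap U_k)=0$. Since $A$ was an arbitrary $\nuG$-null Borel set, this proves $\mu\ll\nuG$. There is no real obstacle here; the only point requiring care is the positivity of $\nuG$ on open sets, which ensures the localized Goldman measure makes sense.
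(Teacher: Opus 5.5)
Your proposal is correct and follows the paper's own argument: a finite subcover by compactness, the observation that $\mu(A\cap U_k)=0$ in both cases, and subadditivity. Your additional check that $\nuG$ charges every nonempty open set, so that $(\nuG)_{U_k}$ is well defined, is a valid point that the paper leaves implicit.
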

\begin{proofclaim}
By compactness of DT components, there is a finite covering of $\RepDT$ given by finitely many such open sets $U_1,\ldots, U_m$. Now, if $\nuG(A)=0$, then $\nuG(A\cap U_i)=0$ and thus $\mu(A\cap U_i)=0$ in both cases. Hence, $\mu(A)\leq \sum_i\mu(A\cap U_i)=0$, which shows $\mu\ll \nuG$.
\end{proofclaim}

\begin{rmk}\label{rem:independence-from-minimality}
It was shown in~\cite{orbit-closures} that the only closed invariant subsets of $\RepDT$ are either finite orbits or the whole space $\RepDT$. In particular, since $\mu$ is assumed to be atom-free, it must have full support, and hence $\mu(U)>0$ for every nonempty open subset $U$. This observation would allow us to simplify the statement of Claim~\ref{claim:gzz-muU<<nuGU}. However, in order to prove Theorem~\ref{main-thm} independently of~\cite{orbit-closures}, we choose not to rely on this result.
\end{rmk}

\subsection{Dynamics on the fibers}\label{sec:dynamics-on-fibers}
Let $\mathcal{B}$ be a pants decomposition of $\surface$, with induced moment map $\beta\colon \RepDT \to \Delta_{\mathcal{B}}$. Its fibers are isotropic tori: those lying over the interior $\oset{\circ}{\Delta}_{\mathcal{B}}$ of $\Delta_\mathcal{B}$ are Lagrangian, while the others have lower dimension. All fibers are invariant under the action of the free abelian subgroup $\Lambda_{\mathcal{B}} \subset \PMod(\surface)$ generated by the $n-3$ Dehn twists along the pants curves of $\mathcal{B}$. Over $\oset{\circ}{\Delta}_{\mathcal{B}}$, these Lagrangian tori form a trivial torus bundle, yielding action--angle coordinates on $\beta^{-1}(\oset{\circ}{\Delta}_{\mathcal{B}})$ and a diffeomorphism with $\oset{\circ}{\Delta}_{\mathcal{B}} \times (\mathbb{R}/2\pi\mathbb{Z})^{n-3}$.

Given $\beta_0=(\beta_1,\ldots,\beta_{n-3})\in\,\oset{\circ}{\Delta}_{\mathcal{B}}$, the action of $\Lambda_{\mathcal{B}}$ on $\beta^{-1}(\beta_0)$ is conjugate to the additive action of $\oplus_i \beta_i\Z\cong \Z^{n-3}$ on $(\R/2\pi\Z)^{n-3}$. The induced action of $\Z$ on $(\R/2\pi\Z)^{n-3}$ given by 
\begin{equation}\label{eq:torus-rotation}
R_{\beta_0}(x_1,\ldots,x_{n-3})= (x_1+\beta_1,\ldots,x_{n-3}+\beta_{n-3}).
\end{equation}
corresponds to the action of the 1-parameter subgroup of $\Lambda_\mathcal{B}$ generated by the product of all $n-3$ Dehn twists along the pants curves of $\mathcal{B}$. When $\beta_1,\ldots,\beta_{n-3}$ and $\pi$ are $\Q$-linearly independent, then there is a unique invariant measure on $\beta^{-1}(\beta_0)$ for the action of $R_{\beta_0}$ (see e.g.~\cite[Corollary~4.15]{einsiedler-ward}). It coincides with the conditional measure of $\nuG$ along the fiber $\beta^{-1}(\beta_0)$, which corresponds, via angle coordinates, to the normalized Lebesgue measure $\lambda$ on $(\R/2\pi\Z)^{n-3}$. 

The subset of $\oset{\circ}{\Delta}_{\mathcal{B}}$ consisting of all tuples $(\beta_1,\ldots,\beta_{n-3})$ that are $\Q$-linearly independent with $\pi$ will be denoted by
\[
\Delta^{\mathrm{irr}}_{\mathcal{B}}\subset\, \oset{\circ}{\Delta}_{\mathcal{B}}.
\]
Its complement $\oset{\circ}{\Delta}_{\mathcal{B}}\setminus\Delta^{\mathrm{irr}}_{\mathcal{B}}$ is countable. Since we are assuming that $\mu$ is atom-free, Proposition~\ref{prop:pushforward-no-atoms} implies that $\beta_\ast\mu$ is also atom-free. Therefore, for any measurable set $A\subset \,\oset{\circ}{\Delta}_{\mathcal{B}}$, we have
\begin{equation}\label{eq:push-forward-full-measure-on-Delta^irr}
    \beta_\ast\mu(A\cap \Delta^{\mathrm{irr}}_{\mathcal{B}})=\beta_\ast\mu(A).
\end{equation}

\subsection{Disintegration}\label{sec:disintegration-moment-map}
We can disintegrate $\mu$ along the moment map $\beta$ induced by a pants decomposition $\mathcal{B}$. This produces a system of Borel probability measures $\{\mu_{\beta_0}\} _{\beta_0\in\Delta_\mathcal{B}}$ on $\RepDT$ such that
\begin{equation}\label{eq:disintegration}
    \mu = \int_{\Delta_\mathcal{B}} \mu_{\beta_0} \, d(\beta_\ast\mu)(\beta_0).
\end{equation}
By the properties of disintegration stated in  Section~\ref{sec:disintegration}, there is a $(\beta_\ast\mu)$-full measure subset 
\[
\Delta_{\mathcal{B}}'\subset \Delta_{\mathcal{B}}
\]
such that for every $\beta_0\in\Delta_{\mathcal{B}}'$, the conditional measure $\mu_{\beta_0}$ is supported on the fiber $\beta^{-1}(\beta_0)$ and is $\Lambda_\mathcal{B}$-invariant. 

In particular, for every $\beta_0 \in \Delta_{\mathcal{B}}' \cap \Delta^{\mathrm{irr}}_{\mathcal{B}}$, the unique ergodicity of the rotation $R_{\beta_0}$ from~\eqref{eq:torus-rotation} implies that, for every measurable set $A \subset \beta^{-1}(\beta_0)$,
\begin{equation}\label{eq:conditional-measure-equals-Lebesgue}
    \mu_{\beta_0}(A)=\lambda(A),
\end{equation}
where we identify $\beta^{-1}(\beta_0)$ with $(\mathbb{R}/2\pi\mathbb{Z})^{n-3}$ via angle coordinates. Moreover, since $\Delta_{\mathcal{B}}' \cap \Delta^{\mathrm{irr}}_{\mathcal{B}}$ has full $(\beta_\ast\mu)$-measure in $\Delta_{\mathcal{B}}' \cap \oset{\circ}{\Delta}_{\mathcal{B}}$ by~\eqref{eq:push-forward-full-measure-on-Delta^irr}, it follows that~\eqref{eq:conditional-measure-equals-Lebesgue} holds for $(\beta_\ast\mu)$-almost every $\beta_0 \in \Delta_{\mathcal{B}}' \cap \oset{\circ}{\Delta}_{\mathcal{B}}$.

\subsection{Transversality}\label{sec:transversality-moment-maps}
We will now conduct local investigations, using a transversality argument, to verify the hypothesis of Claim~\ref{claim:gzz-muU<<nuGU}. Namely, we will prove the following.
\begin{lem}\label{lem:local-aboslute-continuity}
For every $[\rho]$ in $\RepDT$, there is an open neighborhood $U$ of $[\rho]$ such that either $\mu(U)=0$ or $\mu_U\ll (\nuG)_U$.
\end{lem}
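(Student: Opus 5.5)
The plan is to apply the disintegration argument of Section~\ref{sec:disintegration-moment-map} to two transverse moment maps and combine the conclusions in local coordinates. Fix $[\rho]\in\RepDT$ and use Proposition~\ref{prop:transversality} to get chained pants decompositions $\mathcal{B}$ and $\mathcal{D}$ whose differentials $(d\beta_i)_{[\rho]}$ and $(d\delta_i)_{[\rho]}$ span $T^\ast_{[\rho]}\RepDT$. Since $\dim\RepDT=2(n-3)$, the map $\Phi=(\beta,\delta)\colon\RepDT\to\R^{2(n-3)}$ is a local diffeomorphism at $[\rho]$. Moreover each of $\beta$ and $\delta$ is a submersion at $[\rho]$, so $[\rho]$ lies in a regular fiber of both. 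We choose an open neighborhood $U$ of $[\rho]$ on which $\Phi$ is a diffeomorphism onto its image and $U\subset\beta^{-1}(\oset{\circ}{\Delta}_{\mathcal{B}})\cap\delta^{-1}(\oset{\circ}{\Delta}_{\mathcal{D}})$. Shrinking if necessary, $U$ is a product neighborhood in $\mathcal{B}$-action-angle coordinates, $U\cong V_\beta\times W_\gamma$. On $U$ the Goldman measure is a smooth positive density in the coordinates $(\beta,\gamma)$, and also in $(\beta,\delta)$ via $\Phi$. If $\mu(U)=0$ there is nothing to prove, so assume $\mu(U)>0$.

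Let $N\subset U$ satisfy $\nuG(N)=0$. We show $\mu(N)=0$ in two steps.

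The first step uses the disintegration~\eqref{eq:disintegration} along $\beta$. By Proposition~\ref{prop:pushforward-no-atoms} and~\eqref{eq:conditional-measure-equals-Lebesgue}, for $\beta_\ast\mu$-almost every $\beta_0\in V_\beta$ the conditional $\mu_{\beta_0}$ is Lebesgue measure in the angle coordinates $\gamma$. Hence $\mu|_U=\int_{V_\beta}\lambda|_{\{\beta_0\}\times W}\,d(\beta_\ast\mu)(\beta_0)$; that is, $\mu|_U$ is the product $\beta_\ast\mu\otimes\lambda$ in $(\beta,\gamma)$-coordinates.

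The second step does the same along $\delta$. In $(\delta,\eta)$-coordinates, with $\eta$ the $\mathcal{D}$-angles, $\mu|_U$ is locally $\delta_\ast\mu\otimes\lambda$.

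To combine the two, we show that $\beta_\ast\mu|_{V_\beta}$ is absolutely continuous with respect to Lebesgue measure. Take $E\subset V_\beta$ with Lebesgue measure zero. By the first step, $\mu(\beta^{-1}(E)\cap U)=\beta_\ast\mu(E)\cdot\lambda(W)$, so it suffices to see that $\mu(\beta^{-1}(E)\cap U)=0$. By the second step, $\mu(\beta^{-1}(E)\cap U)=\int \lambda\bigl(\beta^{-1}(E)\cap\delta^{-1}(\delta_0)\cap U\bigr)\,d(\delta_\ast\mu)(\delta_0)$. At $[\rho]$, transversality means exactly that $\beta$ restricted to each $\delta$-fiber is a local diffeomorphism: the Lagrangian $\delta$-fiber has dimension $n-3$, and its tangent space meets $\ker d\beta$ trivially. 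After shrinking $U$, this holds uniformly near $[\rho]$, so $\beta$ restricted to each $\delta$-fiber in $U$ is a diffeomorphism onto an open set. Preimages of Lebesgue-null sets under such maps are null for the smooth measure $\lambda$ on the fiber, so the integrand vanishes and $\mu(\beta^{-1}(E)\cap U)=0$. Thus $\beta_\ast\mu|_{V_\beta}\ll\mathrm{Leb}$, and hence $\mu|_U\ll\mathrm{Leb}\otimes\lambda$, which is equivalent to $\nuG|_U$. This gives $\mu_U\ll(\nuG)_U$.

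The main technical point is uniformity in the last step. We must ensure that $U$ is simultaneously a product chart for both fibrations and that the fiberwise maps $\beta|_{\delta^{-1}(\delta_0)\cap U}$ are local diffeomorphisms for every $\delta_0$. This follows from openness of the transversality condition and the implicit function theorem. We also need the disintegration restricted to $U$ to be legitimate. This holds because $U$ is saturated in the angle direction only locally, so we restrict the global disintegration formula to the set $U$ rather than disintegrating $\mu_U$ afresh. Finally, the irrationality conditions enter only through~\eqref{eq:push-forward-full-measure-on-Delta^irr}, which is already established.
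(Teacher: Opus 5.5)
Your argument is correct and uses the same ingredients as the paper (Proposition~\ref{prop:transversality}, Proposition~\ref{prop:pushforward-no-atoms} for both $\mathcal{B}$ and $\mathcal{D}$, and~\eqref{eq:conditional-measure-equals-Lebesgue}). You combine the two disintegrations differently, however, and your way is the more careful one.

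The paper works in the chart $(\beta,\delta)\colon U\to V_\mathcal{B}\times V_\mathcal{D}$. It shows that $(\beta,\delta)_\ast\mu_U$ vanishes on every rectangle $A_\mathcal{B}\times A_\mathcal{D}$ with one Lebesgue-null side. As written, this test is equivalent to absolute continuity of the two marginals $\beta_\ast\mu_U$ and $\delta_\ast\mu_U$. That does not by itself imply $(\beta,\delta)_\ast\mu_U\ll\lambda_\mathcal{B}\times\lambda_\mathcal{D}$: the uniform measure on the diagonal of a square kills all such rectangles yet is singular.

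You instead keep the full fiberwise content of the $\beta$-disintegration, namely $\mu|_U=\beta_\ast\mu\otimes\lambda$ on a product box in $\mathcal{B}$-action-angle coordinates. You then use the $\delta$-disintegration only to show that the marginal $\beta_\ast\mu|_{V_\beta}$ is absolutely continuous. The product structure then gives $\mu|_U\ll\mathrm{Leb}\otimes\lambda\sim\nuG|_U$ at once. This is precisely the Fubini step the paper's computation needs to be complete; the paper's symmetric presentation is shorter but leaves that step implicit.

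Applying the global disintegration identity to Borel subsets of $U$ is also simpler than disintegrating $\mu_U$ and invoking uniqueness as in Claim~\ref{claim:localization-commutes-disintegration}, and it is valid.

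Two minor points:
\begin{itemize}
\item No further shrinking is needed for $\beta|_{\delta^{-1}(\delta_0)\cap U}$ to be a diffeomorphism onto an open set. This already follows from $\Phi|_U$ being an injective local diffeomorphism, and a local diffeomorphism suffices for preimages of null sets to be null.
\item Your sentence about $U$ being ``saturated in the angle direction only locally'' is confused, though the procedure it is meant to justify is fine.
\end{itemize}
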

Once Lemma~\ref{lem:local-aboslute-continuity} has been proved, we may apply Claim~\ref{claim:gzz-muU<<nuGU} and then Claim~\ref{claim:gzz-mu<<nuG} to conclude the proof of Theorem~\ref{main-thm}.

\begin{proof}[Proof of Lemma~\ref{lem:local-aboslute-continuity}]
Let us fix some $[\rho]\in\RepDT$. We first apply Proposition~\ref{prop:transversality} to find two chained pants decompositions $\mathcal{B}$ and $\mathcal{D}$ of $\surface$ such that 
\[
T_{[\rho]}^\ast\RepDT=\big\langle (d\beta_i)_{[\rho]}, (d\delta_i)_{[\rho]} : i=1,\ldots,n-3\big\rangle.
\]
In particular, this means that $\beta([\rho])\in \oset{\circ}{\Delta}_\mathcal{B}$ and $\delta([\rho])\in \oset{\circ}{\Delta}_\mathcal{D}$. Furthermore, by the Inverse Function Theorem, there is an open neighborhood $U$ of $[\rho]$ and an open subset $V_\mathcal{B}\times V_\mathcal{D}$ of $\oset{\circ}{\Delta}_\mathcal{B}\times \oset{\circ}{\Delta}_\mathcal{D}$ such that
\[
(\beta,\delta)\colon U\to V_\mathcal{B}\times V_\mathcal{D}
\]
is a diffeomorphism. We denote by $\lambda_\mathcal{B}$ and $\lambda_\mathcal{D}$ the normalized Lebesgue measures on $V_{\mathcal{B}}$ and $V_\mathcal{D}$. The localized Goldman measure $(\nuG)_U$ is given by a smooth positive density on $U$, therefore $(\beta,\delta)_\ast (\nuG)_U$ is equivalent to the product measure $\lambda_\mathcal{B}\times\lambda_\mathcal{D}$, i.e.~they have the same null sets:
\begin{equation}\label{eq:Goldman-equivalent-Lebesgue}
(\beta,\delta)_\ast (\nuG)_U\sim \lambda_\mathcal{B}\times\lambda_\mathcal{D}.
\end{equation}

Now, assuming $\mu(U)\neq 0$, the localized probability measure $\mu_U$ is a Borel probability measure with support $U$ which can be disintegrated along the moment map $\beta$. Our first observation is that localization and disintegration are two commuting operations.
\begin{claim}\label{claim:localization-commutes-disintegration}
    The conditional measures $\{(\mu_U)_{\beta_0}\}$ coming from the disintegration of $\mu_U$ along $\restr{\beta}{U}\colon U\to V_\mathcal{B}$ coincide with the localized conditional measures $\{(\mu_{\beta_0})_U\}$ for $(\beta_\ast\mu_U)$-almost every $\beta_0\in V_\mathcal{B}$.
\end{claim}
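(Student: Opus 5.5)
The plan is to verify directly that the family $\{(\mu_{\beta_0})_U\}$ satisfies the defining property of the disintegration of $\mu_U$ along $\restr{\beta}{U}$, and then invoke the uniqueness property of conditional measures from Section~\ref{sec:disintegration}. Since conditional measures are only defined almost everywhere, we restrict to the set of $\beta_0\in V_\mathcal{B}$ with $\mu_{\beta_0}(U)>0$, where the localization $(\mu_{\beta_0})_U$ makes sense. We first check that this set has full $(\beta_\ast\mu_U)$-measure. By~\eqref{eq:disintegration} applied to $U\cap\beta^{-1}(N)$ with $N=\{\beta_0:\mu_{\beta_0}(U)=0\}$, we obtain $\mu(U\cap\beta^{-1}(N))=\int_N\mu_{\beta_0}(U)\,d(\beta_\ast\mu)=0$, and hence $\beta_\ast\mu_U(N)=0$.

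The key computation is to identify the pushforward. For measurable $E\subset V_\mathcal{B}$, the support property (conditional measures live on fibers for a.e.\ $\beta_0$) together with~\eqref{eq:disintegration} gives
\[
\beta_\ast\mu_U(E)=\frac{\mu(U\cap\beta^{-1}(E))}{\mu(U)}=\frac{1}{\mu(U)}\int_E \mu_{\beta_0}(U)\,d(\beta_\ast\mu)(\beta_0).
\]
So $\beta_\ast\mu_U$ has density $\mu_{\beta_0}(U)/\mu(U)$ with respect to $\beta_\ast\mu$ restricted to $V_\mathcal{B}$. Then, for any Borel $A\subset U$, we apply~\eqref{eq:disintegration} again and insert this density:
\[
\mu_U(A)=\frac{1}{\mu(U)}\int_{\Delta_\mathcal{B}}\mu_{\beta_0}(A)\,d(\beta_\ast\mu)=\int_{V_\mathcal{B}\setminus N}\frac{\mu_{\beta_0}(A\cap U)}{\mu_{\beta_0}(U)}\,d(\beta_\ast\mu_U)(\beta_0).
\]
Two facts justify restricting the integral to $V_\mathcal{B}\setminus N$. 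First, $\mu_{\beta_0}(A)=0$ for a.e.\ $\beta_0\notin\beta(U)$, by the support property. Second, on $N$ we have $\mu_{\beta_0}(A)\le\mu_{\beta_0}(U)=0$.

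This shows that $\{(\mu_{\beta_0})_U\}$ is a measurable family of probability measures that represents $\mu_U$ as an integral against $\beta_\ast\mu_U$. Moreover, for a.e.\ $\beta_0$ each such measure is supported on $\beta^{-1}(\beta_0)\cap U$, since it is a restriction of $\mu_{\beta_0}$. Measurability of $\beta_0\mapsto(\mu_{\beta_0})_U(A)$ follows from measurability of $\beta_0\mapsto\mu_{\beta_0}(A\cap U)$ and of $\beta_0\mapsto\mu_{\beta_0}(U)$.

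By uniqueness of the disintegration, $(\mu_U)_{\beta_0}=(\mu_{\beta_0})_U$ for $(\beta_\ast\mu_U)$-a.e.\ $\beta_0\in V_\mathcal{B}$. The only delicate point is the bookkeeping of null sets: the support property must be used so that integrating over $V_\mathcal{B}$ rather than $\Delta_\mathcal{B}$ is harmless, and the set $N$ must be excluded. Beyond that, the argument is a formal verification.
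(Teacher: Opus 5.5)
Your proposal is correct and follows the same route as the paper, which proves the claim in one line by appealing to the uniqueness of conditional measures. You supply the verification the paper leaves implicit: $\beta_\ast\mu_U$ has density $\mu_{\beta_0}(U)/\mu(U)$ with respect to $\beta_\ast\mu$, the set $N$ where the localization is undefined is $\beta_\ast\mu_U$-null, and the localized family satisfies both the integral identity and the fiber-support property that uniqueness requires.
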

\begin{proofclaim}
    This is a consequence of the uniqueness of conditional measures guaranteed by the properties of disintegration, see Section~\ref{sec:disintegration}.
\end{proofclaim}

We now have all the ingredients for the upcoming computations.

\begin{claim}\label{claim:pushforward-localization-absolutely-continuous-wrt-Lebesgue}
    If $\mu(U)\neq 0$, then $(\beta,\delta)_\ast \mu_U \ll \lambda_{\mathcal{B}}\times\lambda_\mathcal{D}$.
\end{claim}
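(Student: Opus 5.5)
We will show that every Borel set $E\subset V_\mathcal{B}\times V_\mathcal{D}$ with $(\lambda_\mathcal{B}\times\lambda_\mathcal{D})(E)=0$ satisfies $\mu_U\big((\beta,\delta)^{-1}(E)\big)=0$. We use the disintegration of $\mu_U$ along $\restr{\beta}{U}$:
\[
\mu_U\big((\beta,\delta)^{-1}(E)\big)=\int_{V_\mathcal{B}} (\mu_U)_{\beta_0}\big((\beta,\delta)^{-1}(E)\big)\,d(\beta_\ast\mu_U)(\beta_0).
\]
It therefore suffices to show that the integrand vanishes for $(\beta_\ast\mu_U)$-almost every $\beta_0$. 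The inner quantity should be computed on the single fiber $\beta^{-1}(\beta_0)\cap U$, where the map $(\beta,\delta)$ reduces to $\delta$ restricted to that fiber.

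First we identify the conditional measures. Note that $\beta_\ast\mu_U\ll\beta_\ast\mu$, since $\beta_\ast\mu_U(A)=\mu(\beta^{-1}(A)\cap U)/\mu(U)\le\beta_\ast\mu(A)/\mu(U)$. Hence any $(\beta_\ast\mu)$-full-measure statement about $\beta_0\in\oset{\circ}{\Delta}_\mathcal{B}$ is also $(\beta_\ast\mu_U)$-full, and $V_\mathcal{B}\subset\oset{\circ}{\Delta}_\mathcal{B}$. By Claim~\ref{claim:localization-commutes-disintegration}, $(\mu_U)_{\beta_0}=(\mu_{\beta_0})_U$ for almost every $\beta_0$. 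By~\eqref{eq:conditional-measure-equals-Lebesgue}, $\mu_{\beta_0}=\lambda$ (Lebesgue in angle coordinates) for almost every $\beta_0\in V_\mathcal{B}$. Thus, for almost every $\beta_0$, $(\mu_U)_{\beta_0}$ is the normalized restriction of Lebesgue measure on the torus $\beta^{-1}(\beta_0)$ to the open set $U\cap\beta^{-1}(\beta_0)$. We need $\mu_{\beta_0}(U)>0$ here. This holds automatically, since $\lambda$ charges every nonempty open subset of the fiber, and the fiber meets $U$ for every $\beta_0\in V_\mathcal{B}$ because $(\beta,\delta)$ maps $U$ onto $V_\mathcal{B}\times V_\mathcal{D}$.

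Next comes the fiberwise computation. Since $(\beta,\delta)\colon U\to V_\mathcal{B}\times V_\mathcal{D}$ is a diffeomorphism, for each $\beta_0\in V_\mathcal{B}$ the map $\delta$ restricts to a diffeomorphism from $U\cap\beta^{-1}(\beta_0)$ onto $V_\mathcal{D}$. The fiber piece is an open subset of the Lagrangian torus, and the restricted Lebesgue measure is given there by a smooth positive density. Its pushforward under this diffeomorphism is therefore equivalent to $\lambda_\mathcal{D}$. Write $E_{\beta_0}=\{y:(\beta_0,y)\in E\}$. Then
\[
(\mu_U)_{\beta_0}\big((\beta,\delta)^{-1}(E)\big)=(\mu_U)_{\beta_0}\big(\delta^{-1}(E_{\beta_0})\cap\beta^{-1}(\beta_0)\big),
\]
and this vanishes whenever $\lambda_\mathcal{D}(E_{\beta_0})=0$. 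Fubini gives $\lambda_\mathcal{D}(E_{\beta_0})=0$ only for $\lambda_\mathcal{B}$-almost every $\beta_0$, not for $\beta_\ast\mu_U$-almost every $\beta_0$.

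This is the main obstacle: a priori $\beta_\ast\mu_U$ need not be absolutely continuous with respect to $\lambda_\mathcal{B}$. The remedy is to run the same argument with the roles of $\mathcal{B}$ and $\mathcal{D}$ swapped, first aiming only for partial conclusions. Step one: taking $E=F\times V_\mathcal{D}$ is useless, so instead take $E=V_\mathcal{B}\times G$ with $\lambda_\mathcal{D}(G)=0$. Then $E_{\beta_0}=G$ for every $\beta_0$, and the argument above gives $\delta_\ast\mu_U(G)=0$, so $\delta_\ast\mu_U\ll\lambda_\mathcal{D}$. Symmetrically, disintegrating along $\delta$ (using Proposition~\ref{prop:pushforward-no-atoms} and~\eqref{eq:conditional-measure-equals-Lebesgue} for $\mathcal{D}$, which is chained) gives $\beta_\ast\mu_U\ll\lambda_\mathcal{B}$. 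Step two: with $\beta_\ast\mu_U\ll\lambda_\mathcal{B}$ in hand, for a general null set $E$ the bad set $\{\beta_0:\lambda_\mathcal{D}(E_{\beta_0})>0\}$ is $\lambda_\mathcal{B}$-null by Fubini, hence $\beta_\ast\mu_U$-null. The integrand then vanishes almost everywhere, which proves the claim. Measurability of the sections and of the bad set follows from standard Fubini theory for Borel $E$.
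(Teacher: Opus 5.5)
Your argument is correct. It uses the same ingredients as the paper: disintegration of $\mu_U$ along $\beta$ and then along $\delta$, identification of the conditionals with normalized Lebesgue measure on the fiber pieces via Claim~\ref{claim:localization-commutes-disintegration} and~\eqref{eq:conditional-measure-equals-Lebesgue}, and the fact that $\delta$ restricts to a diffeomorphism $\beta^{-1}(\beta_0)\cap U\to V_\mathcal{D}$.

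The difference is in the final step. The paper only tests rectangles $A_\mathcal{B}\times A_\mathcal{D}$ with $\lambda_\mathcal{D}(A_\mathcal{D})=0$ or $\lambda_\mathcal{B}(A_\mathcal{B})=0$, and then concludes $(\beta,\delta)_\ast\mu_U\ll\lambda_\mathcal{B}\times\lambda_\mathcal{D}$. Vanishing on all such rectangles is equivalent only to absolute continuity of the two marginals, which is your Step one. It does not by itself give absolute continuity of the joint measure: the image of Lebesgue measure under $t\mapsto(t,t)$ vanishes on every rectangle with a null side, yet it is carried by the diagonal, which is a product-null set.

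Your Step two supplies what is missing. You take $\beta_\ast\mu_U\ll\lambda_\mathcal{B}$, obtained from the $\delta$-disintegration, and feed it back into the $\beta$-fiberwise computation. Fubini then shows that $\{\beta_0:\lambda_\mathcal{D}(E_{\beta_0})>0\}$ is $\lambda_\mathcal{B}$-null, hence $\beta_\ast\mu_U$-null, which handles an arbitrary null set $E$. So your version gives the complete justification that the paper's "This proves" skips.

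You also make explicit two points the paper uses silently. First, $\beta_\ast\mu_U\ll\beta_\ast\mu$, so the full-measure sets $\Delta'_\mathcal{B}\cap\Delta^{\mathrm{irr}}_\mathcal{B}$ transfer to $\beta_\ast\mu_U$. Second, $\mu_{\beta_0}(U)>0$ for every $\beta_0\in V_\mathcal{B}$, so the localized conditionals are well defined.
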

\begin{proofclaim}
    Let $A_\mathcal{B}\subset V_\mathcal{B}$ and $A_\mathcal{D}\subset V_\mathcal{D}$ be two Borel measurable subsets. We will first assume that $\lambda_\mathcal{D}(A_\mathcal{D})=0$. Disintegration of $\mu_U$ along $\beta$ gives
    \begin{align*}
    (\beta,\delta)_\ast\mu_U(A_\mathcal{B}\times A_\mathcal{D})&=\int_{V_\mathcal{B}} (\mu_U)_{\beta_0}\left(\beta^{-1}(A_\mathcal{B})\cap \delta^{-1}(A_\mathcal{D})\right) \, d(\beta_\ast\mu_U)(\beta_0)\\
    &=\int_{V_\mathcal{B}} (\mu_{\beta_0})_U\left(\beta^{-1}(A_\mathcal{B})\cap \delta^{-1}(A_\mathcal{D})\right) \, d(\beta_\ast\mu_U)(\beta_0)\\
    &=\int_{V_\mathcal{B}} \frac{\mu_{\beta_0}\left(\beta^{-1}(A_\mathcal{B})\cap \delta^{-1}(A_\mathcal{D})\cap U\right)}{\mu_{\beta_0}\left(U\right)} \, d(\beta_\ast\mu_U)(\beta_0),\\
    \end{align*}
    where we used Claim~\ref{claim:localization-commutes-disintegration} in the second equality, and the definition of localized measure in the third equality. Recall that $\Delta_{\mathcal{B}}^{\mathrm{irr}} \cap \Delta_{\mathcal{B}}' \cap V_{\mathcal{B}}$ has full $(\beta_\ast\mu)$-measure in $V_{\mathcal{B}}$, and hence $\beta_\ast\mu_U\big(\Delta_{\mathcal{B}}^{\mathrm{irr}} \cap \Delta_{\mathcal{B}}' \cap V_{\mathcal{B}}\big)=1$. We may therefore write
    \begin{align*}
    (\beta,\delta)_\ast\mu_U(A_\mathcal{B}\times A_\mathcal{D})&=\int_{\Delta_\mathcal{B}'\cap V_\mathcal{B}} \frac{\mu_{\beta_0}\left(\beta^{-1}(A_\mathcal{B})\cap \delta^{-1}(A_\mathcal{D})\cap U\right)}{\mu_{\beta_0}\left(U\right)} \, d(\beta_\ast\mu_U)(\beta_0)\\
    &=\int_{\Delta_\mathcal{B}'\cap A_\mathcal{B}} \frac{\mu_{\beta_0}\left(\beta^{-1}(\beta_0)\cap \delta^{-1}(A_\mathcal{D})\cap U\right)}{\mu_{\beta_0}\left(\beta^{-1}(\beta_0)\cap U\right)} \, d(\beta_\ast\mu_U)(\beta_0)\\
    &=\int_{\Delta_\mathcal{B}^\mathrm{irr}\cap \Delta_\mathcal{B}'\cap A_\mathcal{B}} \frac{\lambda\left(\beta^{-1}(\beta_0)\cap \delta^{-1}(A_\mathcal{D})\cap U\right)}{\lambda\left(\beta^{-1}(\beta_0)\cap U\right)} \, d(\beta_\ast\mu_U)(\beta_0),\\
    \end{align*}
    where we used~\eqref{eq:conditional-measure-equals-Lebesgue} in the third equality. Note that for every $\beta_0 \in V_{\mathcal{B}}$, the restriction of $(\beta,\delta)$ to the fiber over $\{\beta_0\} \times V_{\mathcal{D}}$ induces a diffeomorphism from $\beta^{-1}(\beta_0)\cap U$ onto $V_{\mathcal{D}}$. The preimage of $A_{\mathcal{D}}$ under this map is $\beta^{-1}(\beta_0)\cap \delta^{-1}(A_{\mathcal{D}})\cap U$. Since diffeomorphisms preserve Lebesgue null sets and $\lambda_{\mathcal{D}}(A_{\mathcal{D}})=0$, it follows that $\lambda\bigl(\beta^{-1}(\beta_0)\cap \delta^{-1}(A_{\mathcal{D}})\cap U\bigr)=0$, and hence $(\beta,\delta)_\ast\mu_u(A_\mathcal{B}\times A_\mathcal{D})=0$. 
    
    Repeating the same argument by switching the roles of $\mathcal{B}$ and $\mathcal{D}$, we reach the same conclusion under the assumption that $\lambda_\mathcal{B}(A_\mathcal{B})=0$ instead of $\lambda_\mathcal{D}(A_\mathcal{D})=0$. This proves that $(\beta,\delta)_\ast\mu_U\ll \lambda_\mathcal{B}\times\lambda_\mathcal{D}$ .
\end{proofclaim}

Combining Claim~\ref{claim:pushforward-localization-absolutely-continuous-wrt-Lebesgue} with~\eqref{eq:Goldman-equivalent-Lebesgue}, we conclude that $(\beta,\delta)_\ast\mu_U\ll (\beta,\delta)_\ast(\nuG)_U$ and thus $\mu_U\ll (\nuG)_U$ because $(\beta,\delta)$ is a diffeomorphism of $U$ onto $V_\mathcal{B}\times V_\mathcal{D}$.
\end{proof}

\appendix
\setcounter{section}{0}
\renewcommand\sectionname{Appendix}
\addcontentsline{toc}{section}{Appendix}
\renewcommand{\thesection}{\Alph{section}}

\section{Trigonometric computations}\label{apx:trig-computations}
\subsection{Trigonometric formulas}
We recall the essential formulas that govern the trigonometry of a hyperbolic triangle of side lengths $(a,b,c)$ and interior angles $(\alpha, \beta, \gamma)$. 
\subsubsection*{Hyperbolic laws of cosines}
\begin{align}
    \cos(\gamma)&=\frac{\cosh(a)\cosh(b)-\cosh(c)}{\sinh(a)\sinh(b)} \tag{C.1}\label{eq:hyperbolic-law-of-cosines-lengths},\\
    \cosh(c)&= \frac{\cos(\alpha)\cos(\beta)+\cos(\gamma)}{\sin(\alpha)\sin(\beta)}. \tag{C.2}\label{eq:hyperbolic-law-of-cosines-angles}
\end{align}
\subsubsection*{Hyperbolic law of sines}
\begin{equation}
    \sin(\alpha)=\sinh(a)\frac{\sin(\beta)}{\sinh(b)} \tag{S}\label{eq:hyperbolic-law-of-sines}.
\end{equation}
\subsubsection*{Four-parts formula}
\begin{equation}
    \cos (\gamma)\cosh(a)=\sinh(a)\coth(b)-\sin(\gamma)\cot(\beta). \tag{F}\label{eq:four-part-formula}
\end{equation}

\subsection{Calculations from Lemma~\ref{lem:induction-step-fibers-zero-measure}}
We present the explicit trigonometric computations omitted earlier in the proof of Lemma~\ref{lem:induction-step-fibers-zero-measure}. The reader may find it helpful to refer to Figure~\ref{fig:triangle-chain-B-and-Y-bis} when following the computations.

We will denote by $\varepsilon$ the angle $\angle(Y_1,C_j,B_{j-1})$. Since $\angle(C_i,C_j,Y_1)=\angle (B_{j-2}, C_j, B_{j-1})=\pi-\alpha_j/2$ by construction, we conclude that 
\[
\angle (C_i,C_j,B_{i-1})=\angle (Y_1,C_j,B_{j-1})=\varepsilon.
\]

\begin{figure}[h]
\centering
\begin{tikzpicture}[font=\sffamily, scale=1.1]
\node[anchor=south west,inner sep=0] at (0,0) {\includegraphics[width=10.8cm]{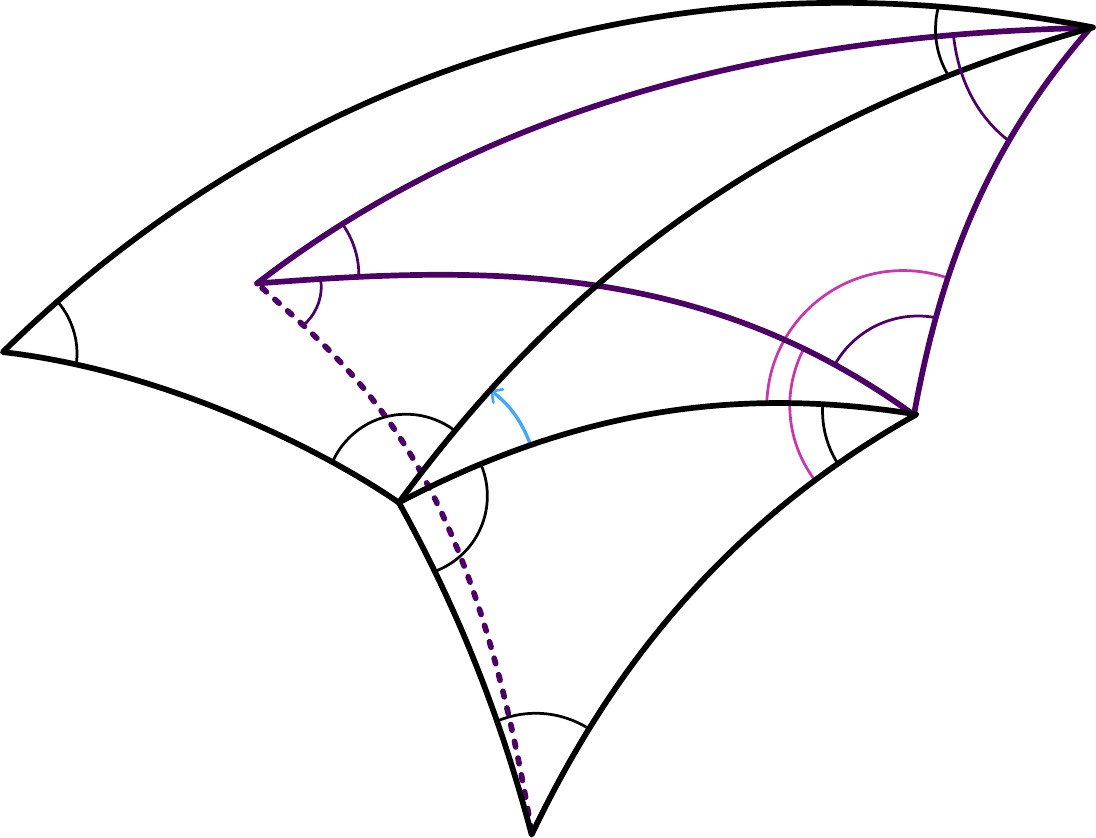}};
\begin{scope}
\fill (9.8,7.25) circle (0.07) node[right]{$C_i$};
\fill (8.2,3.78) circle (0.08) node[right]{$C_j$};
\draw (8.5, 7.7) node{\small $\pi-\alpha_i/2$};
\draw (8.25, 3.3) node{\small $\pi-\alpha_j/2$};
\draw (6.1, 0.85) node{\small $\pi-\beta_{j-1}/2$};
\draw (4.85, 2.8) node{\small $\beta_{j-2}/2$};
\draw (2.2, 3.2) node{\small $\pi-\beta_{i-1}/2$};
\draw (1.25, 4.55) node{\small $\beta_{i-2}/2$};
\draw (5.85, 5.7) node{$d_1$};
\draw (5.85, 4.05) node{$d_2$};
\draw (6.5, 2.17) node{$d_3$};
\end{scope}

\begin{scope}[apricot]
\fill (0.06,4.36) circle (0.07) node[left]{$B_{i-2}$};
\fill (3.6,3) circle (0.07) node[below left]{$B_{i-1}=B_{j-2}$};
\fill (4.77,0.04) circle (0.07) node[below]{$B_{j-1}$};
\end{scope}

\begin{scope}[sky]
\draw (4.9,3.9) node{$\gamma_{\ell_0}$};
\end{scope}

\begin{scope}[plum]
\fill (2.32,4.97) circle (0.07) node[left]{$Y_1$};   
\draw (2.8, 4.75) node[right]{$\eta$};
\draw (9.75, 6.25) node{\small $\pi-\alpha_i/2$};
\draw (9.1, 4.6) node{\small $\pi-\alpha_j/2$};
\draw (3.85, 5.35) node{\small $\pi-\upsilon_1/2$};
\end{scope}

\begin{scope}[mauve]
\draw (6.95, 3.6) node{$\varepsilon$};
\draw (7.5, 5.15) node{$\varepsilon$};  
\end{scope}
\end{tikzpicture}
\caption{The relevant configuration for the computations of Appendix~\ref{apx:trig-computations}.}
\label{fig:triangle-chain-B-and-Y-bis}
\end{figure}

Let us now apply the four-parts formula~\eqref{eq:four-part-formula} to the triangle $(Y_1,C_j,B_{j-1})$ to obtain our main relation
\begin{equation}\label{eq:main-relation}
\cos(\varepsilon)\cosh\big(d(Y_1,C_j)\big)=\sinh\big(d(Y_1,C_j)\big)\coth\big(d(C_j,B_{j-1})\big)-\sin(\varepsilon)\cot(\eta).
\end{equation}
Our goal now is to express every term in~\eqref{eq:main-relation} using only $\cos(\upsilon_1/2)$ and the angles $\alpha_1,\ldots,\alpha_n$ and $\beta_1,\ldots,\beta_{n-3}$. To simplify the notation, we will write
\[
\left\{\begin{array}{l}
d_1=d(C_i,B_{i-1}),\\
d_2=d(C_j, B_{j-2}),\\
d_3=d(C_j, B_{j-1}).
\end{array}
\right.
\]
The three distances $d_1, d_2, d_3$ can be expressed purely in terms of $\alpha_1,\ldots,\alpha_n$ and $\beta_1,\ldots,\beta_{n-3}$.

We start with $\cos(\varepsilon)$ and $\sin(\varepsilon)$. The hyperbolic law of cosines~\eqref{eq:hyperbolic-law-of-cosines-lengths} applied to the triangle $(C_i,C_j,B_{j-2})$ gives
\begin{equation}\label{eq:cos(epsilon)}
\cos(\varepsilon)=\frac{\cosh\big(d(C_i,C_j)\big)\cosh(d_2)-\cosh(d_1)}{\sinh\big(d(C_i,C_j)\big)\sinh(d_2)}.
\end{equation}
By squaring~\eqref{eq:cos(epsilon)} and using $\sin(\varepsilon)^2=1-\cos(\varepsilon)^2$, as well as the identity $\sinh(x)^2\sinh(y)^2-\cosh(x)^2\cosh(y)^2=1-\cosh(x)^2-\cosh(y)^2$, we obtain
\begin{equation}\label{eq:sin(epsilon)^2}
\scalebox{1.1}{%
 $\sin(\varepsilon)^2=
 \frac{1-\cosh(d(C_i,C_j))^2-\cosh(d_2)^2-\cosh(d_1)^2+2\cosh(d(C_i,C_j))\cosh(d_2)\cosh(d_1)}{\sinh(d(C_i,C_j))^2\sinh(d_2)^2}.$%
 }
\end{equation}
Both equations~\eqref{eq:cos(epsilon)} and~\eqref{eq:sin(epsilon)^2} involve the distance $d(C_i,C_j)$ which we compute by applying the hyperbolic law of cosines~\eqref{eq:hyperbolic-law-of-cosines-angles} to the triangle $(C_i,C_j,Y_1)$, we obtain
\begin{equation}\label{eq:cosh(C_i,C_j)}
\cosh\big(d(C_i,C_j)\big)=\frac{-\cos\left(\frac{\upsilon_1}{2}\right)+\cos\left(\frac{\alpha_i}{2}\right)\cos\left(\frac{\alpha_j}{2}\right)}{\sin\left(\frac{\alpha_i}{2}\right)\sin\left(\frac{\alpha_j}{2}\right)}.
\end{equation}

Finally, we compute the term $\cosh\big(d(Y_1,C_j)\big)$ appearing in~\eqref{eq:main-relation} by applying the hyperbolic law of cosines~\eqref{eq:hyperbolic-law-of-cosines-angles} to the triangle $(C_i,C_j,Y_1)$ and get
\begin{equation}\label{eq:cosh(Y_1C_j)}
\cosh\big(d(Y_1,C_j)\big)=\frac{-\cos\left(\frac{\alpha_i}{2}\right)+\cos\left(\frac{\alpha_j}{2}\right)\cos\left(\frac{\upsilon_1}{2}\right)}{\sin\left(\frac{\alpha_j}{2}\right)\sin\left(\frac{\upsilon_1}{2}\right)}.
\end{equation}
On the other hand, the hyperbolic law of sines~\eqref{eq:hyperbolic-law-of-sines} in the triangle $(C_i,C_j,Y_1)$ gives
\begin{equation}\label{eq:sinh(Y_1C_j)}
\sinh\big(d(Y_1,C_j)\big)=\sinh\big(d(C_i,C_j)\big)\frac{\sin\left(\frac{\alpha_i}{2}\right)}{\sin\left(\frac{\upsilon_1}{2}\right)}.    
\end{equation}

We now rewrite~\eqref{eq:main-relation} by plugging in the values of $\cos(\varepsilon)$, $\cosh\big(d(Y_1,C_j)\big)$, $\sinh\big(d(Y_1,C_j)\big)$ from~\eqref{eq:cos(epsilon)},~\eqref{eq:cosh(Y_1C_j)},~\eqref{eq:sinh(Y_1C_j)}. After clearing out the denominators, we obtain
\begin{equation}\label{eq:first-plug-in}
\scalebox{1}{%
$\begin{split}
\sin&(\varepsilon)\sinh\big(d(C_i,C_j)\big)\sinh(d_2)\cot(\eta)\sin\left(\frac{\upsilon_1}{2}\right)\sin\left(\frac{\alpha_j}{2}\right) =\\
&\sinh\big(d(C_i,C_j)\big)^2\sin\left(\frac{\alpha_i}{2}\right)\sin\left(\frac{\alpha_j}{2}\right)\coth(d_3)\sinh(d_2)\\
&+\left(\cosh\big(d(C_i,C_j)\big)\cosh(d_2)-\cosh(d_1)\right)\left(\cos\left(\frac{\alpha_i}{2}\right)-\cos\left(\frac{\alpha_j}{2}\right)\cos\left(\frac{\upsilon_1}{2}\right)\right).
\end{split}$%
}
\end{equation}
In order to transform~\eqref{eq:first-plug-in} into a polynomial in $\cos(\upsilon_1/2)$, we start by squaring both sides. We then use the trigonometric relation 
\[
\cot(\eta)^2=\frac{\cos(\eta)^2}{1-\cos(\eta)^2}
\]
and multiply both sides of the squared equation by $1-\cos(\eta)^2$. We continue by rewriting both $\cos(\eta)^2$ as a degree $2m$ Chebyshev polynomial in $\cos(\upsilon_1/2)$ with leading coefficient $2^{2m-2}$. We also plug in the expression~\eqref{eq:sin(epsilon)^2} for $\sin(\varepsilon)^2$. Now, we rewrite the term $\sin(\upsilon_1/2)^2$ as $1-\cos(\upsilon_1/2)^2$ and each $\sinh\big(d(C_i,C_j)\big)^2$ as $\cosh\big(d(C_i,C_j)\big)^2-1$. Finally, we replace each $\cosh\big(d(C_i,C_j)\big)$ by the expression~\eqref{eq:cosh(C_i,C_j)}. These algebraic manipulations turn~\eqref{eq:first-plug-in} into a degree $2m+4$ polynomial equation in $\cos(\upsilon_1/2)$ (the angles $\alpha_i,\alpha_j$ and the distances $d_1,d_2,d_3$ are treated as constants). We will not write this polynomial explicitly, but we will compute its leading coefficient and show it is never zero. If we track all the highest degree terms in $\cos(\upsilon_1/2)$ for each term appearing in~\eqref{eq:first-plug-in} squared, we obtain the following. We use the symbol $\approx$ to mean that we extracted the highest degree term in $\cos(\upsilon_1/2)$. First, we study the left hand side of~\eqref{eq:first-plug-in} squared and multiplied by $1-\cos(\eta)^2$:
\begin{align*}
    &\sin(\varepsilon)^2\sinh\big(d(C_i,C_j)\big)^2\sinh(d_2)^2\approx -\cosh\big(d(C_i,C_j)\big)^2\approx \frac{-\cos\left(\frac{\upsilon_1}{2}\right)^2}{\sin\left(\frac{\alpha_i}{2}\right)^2\sin\left(\frac{\alpha_j}{2}\right)^2}\\
    &\cos(\eta)^2\approx 2^{2m-2}\cos\left(\frac{\upsilon_1}{2}\right)^{2m}\\
    &\sin\left(\frac{\upsilon_1}{2}\right)^2\approx -\cos\left(\frac{\upsilon_1}{2}\right)^2.
\end{align*}
So the highest degree term in $\cos(\upsilon_1/2)$ appearing on the left hand side after squaring~\eqref{eq:first-plug-in} and multiplying it by $1-\cos(\eta)^2$ is
\[
\frac{2^{2m-2}}{\sin\left(\frac{\alpha_i}{2}\right)^2}\cdot \cos\left(\frac{\upsilon_1}{2}\right)^{2m+4}.
\]
Next, for the right hand side:
\begin{align*}
    &1-\cos(\eta)^2\approx -2^{2m-2}\cos\left(\frac{\upsilon_1}{2}\right)^{2m}\\
    &\sinh\big(d(C_i,C_j)\big)^2\approx \cosh\big(d(C_i,C_j)\big)^2\approx \frac{\cos\left(\frac{\upsilon_1}{2}\right)^2}{\sin\left(\frac{\alpha_i}{2}\right)^2\sin\left(\frac{\alpha_j}{2}\right)^2}\\
    &\cosh\big(d(C_i,C_j)\big)\approx \frac{-\cos\left(\frac{\upsilon_1}{2}\right)}{\sin\left(\frac{\alpha_i}{2}\right)\sin\left(\frac{\alpha_j}{2}\right)}.
\end{align*}
This gives the following highest degree term on the right hand side
\[
-2^{2m-2}\frac{\left(\coth(d_3)\sinh(d_2)+\cosh(d_2)\cos\left(\frac{\alpha_j}{2}\right)\right)^2}{\sin\left(\frac{\alpha_i}{2}\right)^2\sin\left(\frac{\alpha_j}{2}\right)^2}\cdot \cos\left(\frac{\upsilon_1}{2}\right)^{2m+4}.
\]
If we bring back all the terms to the left hand side, the leading coefficient will be
\[
\frac{2^{2m-2}}{\sin\left(\frac{\alpha_i}{2}\right)^2}\left(1+\frac{\left(\coth(d_3)\sinh(d_2)+\cosh(d_2)\cos\left(\frac{\alpha_j}{2}\right)\right)^2}{\sin\left(\frac{\alpha_j}{2}\right)^2}\right)>0.
\]
This shows that the resulting polynomial in $\cos(\upsilon_1/2)$ is indeed of degree $2m+4$.

\bibliographystyle{amsalpha}
\bibliography{references.bib}
\end{document}